\numberwithin{equation}{section}
\newtheorem{lem}{Lemma}[section]
\newtheorem{thm}{Theorem}[section]
\newtheorem{prop}[thm]{Proposition}
\theoremstyle{remark}
\newtheorem{rmk}{Remark}[section]
\renewcommand{\tilde}{\widetilde}
\renewcommand{\hat}{\widehat}
\renewcommand{\bar}{\overline}
\newcommand{\nn}{\nonumber}
\newcommand{\R}{{\mathbb R}}
\newcommand{\del}{\partial}
\newcommand{\Denote}{\stackrel{\Delta}{=}}
\newcommand{\dx}{ \, {\rm d} x}
\newcommand{\dv}{ \, {\rm d} v}
\newcommand{\dmu}{\, {\rm d} \mu}
\newcommand{\One}{\boldsymbol{1}}
\newcommand{\La}{\left\langle}
\newcommand{\Ra}{\right\rangle}
\newcommand{\ii}{I}
\newcommand{\CalB}{{\mathcal{B}}}
\newcommand{\CalD}{{\mathcal{D}}}
\newcommand{\CalK}{{\mathcal{K}}}
\newcommand{\CalL}{{\mathcal{L}}}
\newcommand{\CalP}{{\mathcal{P}}}
\newcommand{\CalR}{{\mathcal{R}}}
\newcommand{\Vecf}{\vec f}
\newcommand{\Vech}{\vec h}
\newcommand{\Vecg}{\vec g}
\newcommand{\Vf}{f}
\newcommand{\VU}{\vec U}
\newcommand{\VecL}{{\CalL}}
\newcommand{\VecX}{\vec X}
\newcommand{\Vecphi}{{{\vec \phi}}}
\newcommand{\PK}{{\bf PK}}
\newcommand{\PL}{{\bf PL}}
\newcommand{\BA}{{\bf A}}
\newcommand{\Be}{{\bf e}}
\newcommand{\NullL} {{\rm Null} \, \VecL}
\newcommand{\domega}{\, {\rm d}\omega}
\newcommand{\dsigma}{\, {\rm d}\sigma}
\newcommand{\I}{{\rm I}}
\newcommand{\Vd}{\bar v}
\newcommand{\mc}[1]{\mathcal{#1}}
\newcommand{\TT}{\mathrm{T}}
\newcommand{\VV}{\mathbb{V}}
\newcommand{\rd}{\mathrm{d}}
\newcommand{\abs}[1]{\left\lvert#1\right\rvert}
\newcommand{\norm}[1]{\left\lVert#1 \, \right\rVert}
\newcommand{\vint}[1]{\left\langle#1\right\rangle}
\newcommand{\viint}[2]{\left\langle#1, \, #2 \,\right\rangle}
\newcommand{\vpran}[1]{\left(#1\right)}
\DeclareMathOperator{\Span}{span}
\begin{document}

\title{Half-space Kinetic Equations with General Boundary Conditions}

\author{Qin Li} 
\address{Mathematics Department, University of Wisconsin-Madison, 480 Lincoln Dr., Madison, WI 53705 USA.}
\email{qinli@math.wisc.edu}
\author{Jianfeng Lu}
\address{Departments of Mathematics, Physics, and Chemistry, Duke University, Box 90320, Durham, NC 27708 USA.}
\email{jianfeng@math.duke.edu}
\author{Weiran Sun}
\address{Department of Mathematics, Simon Fraser University, 8888 University Dr., Burnaby, BC V5A 1S6, Canada}
\email{weirans@sfu.ca}

\date{\today}

\thanks{We would like to express our gratitude to the support from the
  NSF research network grant RNMS11-07444 (KI-Net). The research of Q.L.~was supported in part by the AFOSR MURI grant FA9550-09-1-0613 and the National Science Foundation under award DMS-1318377. The research of J.L.~was supported in part by the Alfred P.~Sloan Foundation and the National Science Foundation under award DMS-1312659 and DMS-1454939. The research of W.S.~was supported in part by the Simon Fraser University President's Research Start-up Grant PRSG-877723 and NSERC Discovery Individual Grant \#611626.}

\begin{abstract}
  We study half-space linear kinetic equations with general boundary
  conditions that consist of both given incoming data and various type
  of reflections, extending our previous work \cite{LiLuSun} on
  half-space equations with incoming boundary conditions. As in
  \cite{LiLuSun}, the main technique is a damping adding-removing
  procedure. We establish the well-posedness of linear (or linearized)
  half-space equations with general boundary conditions and
  quasi-optimality of the numerical scheme. The numerical method is
  validated by examples including a two-species transport equation, a
  multi-frequency transport equation, and the linearized BGK equation
  in 2D velocity space.
\end{abstract}

\subjclass[2010]{35Q20; 65N35}
\maketitle

\section{Introduction}
In this paper we propose an efficient numerical method for linear
half-space kinetic equations with general boundary conditions
\begin{equation}
  \begin{aligned} \label{eq:kinetic-1}
    & \mu \del_x \Vecf + \CalL \Vecf  = 0 \,, && \text{in } (0, \infty)\times \mathbb{V} \,,
    \\
    & \Vecf \big|_{\mu > 0} = \Vech(\mu) + \CalK \vpran{\Vecf \big|_{\mu < 0}} \,, && \text{on } x = 0 \,,
\end{aligned}
\end{equation}
where the density function $\Vecf(x, v) \in \R^m$ with $m \geq 1$ for
$x \in [0, \infty)$ and
$v = (\mu, \bar v) = (\mu, v_2, \cdots, v_d) \in \VV$. Typical
examples for the velocity space $\VV$ include the whole space $\R^d$, as in the case of the Boltzmann equation, and $\VV = [-1,
1]$ as in the case of the transport equation. 
By allowing higher-dimensions in $\Vecf$ and $v$, we include
multi-species models and models with multi-dimensional velocity
variables such as the linearized Boltzmann and linearized BGK equations. The setup also
includes the multi-frequency case where the frequency variable can be
treated 
as an index for
multi-species after discretization.

The operator $\CalL$ in~\eqref{eq:kinetic-1} is a linear operator,
examples of which include the scattering operator in the linear
transport equations, the collision operator in the linearized
Boltzmann equations and the linearized BGK equation. The operator
$\CalK$ is the boundary operator which characterizes various types of
reflections at the boundary.
Two classical examples for the reflections are the diffuse and
specular reflections. 
It will be
discussed in details in Section~\ref{sec:K} that our method applies to a
general class of boundary operators 
including Maxwell boundary condition (linear combination of the
diffuse and specular reflection), bounce-back reflection, and also the
more general (linearized) Cercignani-Lampis boundary condition.

It is well known that to ensure the well-posedness of
equation~\eqref{eq:kinetic-1}, one needs to prescribe suitable
boundary conditions at $x=\infty$. The precise conditions were first
formulated in \cite{CoronGolseSulem:88} for the linearized Boltzmann
equations with prescribed incoming data. This type of well-posedness
result has been extended to general linear/linearized half-space
equations and weakly nonlinear half-space equations with both incoming
and Maxwell boundary conditions (see e.g., \cites{Golse:08, ST2011,
  UkaiYangYu:03}) and also to discrete Boltzmann equation with general boundary conditions \cites{Bernhoff:08, Bernhoff:10}.  This is also the setting that we use for
developing numerical methods. Now we briefly explain the details of
the formulation of the boundary condition at infinity. Denote the null
space of $\CalL$ as $\NullL$ which is assumed to be
finite-dimensional.  Let $\CalP$ be the $L^2$-projection operator onto
$\NullL$ and $\CalP^\perp$ as the corresponding orthogonal projection
operator such that
\begin{align*}
    \CalP : L^2(\dsigma) \to \NullL \,,
\qquad
    \CalP^\perp = \I - \CalP \,.
\end{align*}
Define the operator $\CalP_1: \NullL \to \NullL$ as 
\begin{align*}
    \CalP_1 \Vecf = \CalP(\mu \Vecf \,)
\qquad
   \text{for $\Vecf \in \NullL$.}
\end{align*}
It is clear that $\CalP_1$ is a symmetric operator on a finite-dimensional space, and hence all its eigenvalues are real. Denote the eigenspaces of $\CalP_1$ associated with positive, negative, and zero eigenvalues as $H^+, H^-, H^0$ respectively. Then $\NullL$ is decomposed as
\begin{align*}
    \NullL = H^+ \oplus H^- \oplus H^0 \,.
\end{align*}
Using these notations, we prescribe the boundary conditions at $x = \infty$ in a similar way as in \cite{CoronGolseSulem:88} such that
\begin{align*}
  \lim_{x \to \infty} \Vecf \in H^+ \oplus H^0\,.
\end{align*}
The complete form of the kinetic equation considered in this paper reads
\begin{equation}
\begin{aligned} \label{eq:kinetic}
     & \mu \del_x \Vecf + \CalL \Vecf  = 0 \,,
\\
     & \Vecf \big|_{\mu>0} = \Vech(\mu) + \CalK (\Vecf \big|_{\mu<0})\,, && x = 0 \,,
\\
     & \lim_{x \to \infty} \Vecf \in H^+ \oplus H^0 \,, 
\end{aligned}
\end{equation}
More specific assumptions regarding $\CalL$ and $\CalK$ to guarantee the well-posedness of \eqref{eq:kinetic} will be discussed in Section~\ref{sec:2}. 

Half-space equations with general boundary conditions are frequently
encountered in electric propulsion for
satellites~\cite{goebel_fundamentals_2008} and photon transport in
solid state
devices~\cite{hua_analytical_2014,hadjiconstantinou_variance-reduced_2010},
among many other applications. The standard treatment of this type of
equations is the Monte Carlo
method~\cite{hadjiconstantinou_variance-reduced_2010}. There are also
special cases where analytical solutions are
possible~\cite{hua_analytical_2014}.  

In \cite{LiLuSun} we developed a
direct systematic method to solve half-space equations in the case of pure incoming boundary condition (when $\CalK = 0$).  There are also other direct numerical approaches for this case proposed in
\cites{Coron:90, GolseKlar:95}. Compared with our approach, these methods
suffer from severe Gibbs phenomena and lack of error analysis or
systematic strategy to reduce numerical errors. 
We also note that the method for linearized discrete equations in
\cite{Bernhoff:08} can be applied to solve the continuous half-space equation by approximating it using discrete velocity models. 
Unlike \cite{Bernhoff:08} which focuses on the analysis of the discrete
model, our goal here is to approximate the solutions to the continuous
half-space equation using a spectral type method with convergence analysis.

The present work extends our previous method to the case when various
reflections are involved. The main difficulties that we need to
overcome are the degeneracy of $\CalL$, the derivation of a proper
weak formulation involving $\CalK$, and the fact that the boundary
conditions at $x=\infty$ are part of the solution instead of being
prescribed.  To this end, we apply similar procedure proposed in
\cite{LiLuSun}, which combines and extends the ideas of even-odd
decomposition \cite{EggerSchlottbom:12} and a damping adding-removing
procedure \cites{UkaiYangYu:03, Golse:08}. More specifically, we first
modify $\CalL$ by adding damping terms to it (see
Section~\ref{assump:L}). This will remove the degeneracy of $\CalL$
and ensure that the end-state of the damped solution at $x = \infty$
is zero. Both analysis and numerical schemes are then performed on the
weak formulation of the damped equation, which is derived by applying
an even-odd decomposition with mixed regularity
\cite{EggerSchlottbom:12} of the (damped) solution $\Vecf$.  One
important advantage of the even-odd decomposition is that it leads to
a natural way of constructing a family of basis functions that
captures the possible jump discontinuity of the solution at $\mu = 0$,
by the odd extension of the basis functions constructed for positive
$\mu$.  This discretization of the velocity space based on even-odd
decomposition turns out to be equivalent to the double-$P_n$ method
developed in the literature of solving neutron transport equations,
see e.g.,~\cite{thompson_theory_1963}. We also comment that the
appearance of the boundary operator $\CalK$ introduces extra
difficulty into formulating the weak form of the half-space
equation. The difficulty comes from the fact that only the even part
of the solution $\Vecf^+$ has enough regularity to define a trace on
the boundary. Our main idea here is to use the properties assumed for
$\CalK$ in Section 2 to represent the odd part $\Vecf^-$ on the
boundary in terms of $\Vecf^+$.

Our numerical method is spectral in nature: we apply Galerkin approximations to the weak formulation and use  Babu\v{s}ka-Aziz lemma to show that the damped equation is well-posed and the finite-dimensional approximation is quasi-optimal. It will be clear that the damping plays a crucial role here. 
Finally, we make use of the linearity and use proper superposition of
certain special solutions to the damped equation to recover the original undamped solution. 

A by-product of the above procedure is that we obtain a unified proof for the
well-posedness of the half-space equations with general boundary
conditions.  This well-posedness theory is general enough to include
multi-species and multi-dimensional (in velocity) half-space
equations.

The layout of the paper is as follows. In Section 2 we explain all the assumptions for the linear operator $\CalL$ and the boundary operator $\CalK$. In Section 3 we prove the well-posedness of the half-space equation using the damping adding-removing procedure. In Section 4 we show three numerical examples which cover the three cases of multi-species, multi-frequency transport equations and a multi-dimensional (in velocity) linearized BGK equation. 

\section{Main Assumptions for $\CalL$ and $\CalK$}  \label{sec:2}
In this section we collect the conditions on the linear operator $\CalL$ and the boundary operator $\CalK$. 

\smallskip

\noindent{\em Notation.} In this paper we denote
\begin{align*}
   \viint{\Vecf}{\Vecg}_v = \int_{\VV} \Vecf \cdot \Vecg \dsigma \,,
\quad \text{and} \quad 
   \viint{\Vecf}{\Vecg}_{x,v} = \int_{\R}\int_{\VV} \Vecf \cdot \Vecg \dsigma \dx \,,
\end{align*}
where $\dsigma$ is a measure in the velocity space. Throughout this
paper we assume that the measure $\dsigma$ is symmetric with respect
to $\mu$.

\subsection{Main Assumptions for $\CalL$}  \label{assump:L}
In this subsection we state the general assumptions for the collision operator $\CalL$. 
First, define the weight function (attenuation coefficient)
\begin{equation}\label{def-a}
     a(v) = (1 + |v|)^{\kappa_0}
\end{equation}
for some $0 \leq \kappa_0 \leq 1$. 
The first four basic assumptions for the linear operator $\CalL$ are as follows:
\begin{itemize}
\item[({\bf \PL1})] $\VecL: \CalD(\VecL) \to (L^2(\dsigma))^m$ is
  self-adjoint with its domain $\CalD(\CalL)$ given by
  \begin{equation*}
    \CalD(\VecL) = \{\Vecf \in (L^2(\dsigma))^m \big| \, 
    a(v) \Vecf \in (L^2(\dsigma))^m\} \subseteq (L^2(\dsigma))^m \,,
  \end{equation*}
  where $a(v)$ is defined in \eqref{def-a}. Such space arises
  naturally for linear/linearized collision operator since in many
  cases $\CalL$ has the structure as
  \begin{align*}
    \CalL = a(v) {\mathcal{I}} + \CalL_1 \,,
  \end{align*}
  where $\CalL_1$ is a bounded or even compact operator. 
\item[({\bf \PL2})] $\VecL: (L^2(a\dsigma))^m \to
  (L^2(\tfrac{1}{a}\dsigma))^m$ is bounded, that is, there exists a
  constant $C_0 > 0$ such that
  \begin{equation*}
    \norm{\VecL \Vecf }_{(L^2(\tfrac{1}{a}\dsigma))^m} 
    \leq  C_0 \norm{\Vecf}_{(L^2(a\dsigma))^m} \,.
  \end{equation*}
\item[({\bf \PL3})] $\NullL$ is finite dimensional and
  $\NullL\subseteq (L^p(\dsigma))^m$ for all $p \in [1, \infty)$.

\item[({\bf \PL4})] $\CalL$ is nonnegative: for any $\Vecf \in (L^2(a\dsigma))^m$, 
\begin{align} \label{cond:positive-L}
     \int_{\VV} \Vecf \cdot \CalL \Vecf \dsigma
\geq 0 \,.
\end{align}
\end{itemize}

Assumptions (\PL1)-(\PL4) are general enough to include many classical
models such as the linearized Boltzmann operators (around Maxwellians)
with hard-potentials, the linearized BGK operator, and linear
transport operators for single- or multi-species.
In fact, these classical operators satisfy an even stronger coerciveness property:
\begin{align} \label{coerv-1} 
  \int_{\VV} \Vecf \cdot \CalL \Vecf
  \dsigma \geq c_0 \norm{\CalP^\perp \Vecf}_{(L^2(a\dsigma))^m}^2 \,,
\end{align}
where recall that $\CalP^\perp = \I - \CalP$ and $\CalP: (L^2(\dsigma))^m \to \NullL$ is the projection onto $\NullL$.

We need one last essential assumptions on the coercivity of a damped version of  $\CalL$ on the whole $(L^2(\dsigma))^m$ but not just $(\NullL)^{\perp}$. 
To properly explain this assumption, we introduce several definitions
related to the null space of $\CalL$. Recall that $\CalP_1: \NullL \to
\NullL$ is the operator given by
\begin{equation}
       \CalP_1 (f) = \CalP(\mu f) \qquad \text{for any $f \in \NullL$} \,. \nn
\end{equation}
Note that $\CalP_1$ is a symmetric operator on the finite dimension
space $\NullL$. Therefore, its eigenfunctions form a complete 
basis of $\NullL$. Denote $H^+, H^-, H^0$ as the eigenspaces of $\CalP_1$
corresponding to positive, negative, and zero eigenvalues respectively
and denote their dimensions as
\begin{equation*}
  \nu_+ = \dim H^+, \qquad  \nu_- = \dim H^-, \qquad \nu_0 = \dim H^0 \,.  
\end{equation*}
Let $X_{+,i}, X_{-,j}, X_{0,k}$ be the associated orthornormal
eigenfunctions with $1 \leq i \leq \nu_+$, $1 \leq j \leq \nu_-$, and
$1 \leq k \leq \nu_0$. Note that if any of $\nu_\pm, \nu_0$ is equal
to zero, we simply do not have any eigenfunction associated with the
corresponding eigenspace.  By definition, these eigenfunctions satisfy
\begin{equation*} 
\begin{aligned}
 \La X_{\tau, \gamma}, X_{\tau', \gamma'} \Ra_v = \delta_{\tau\tau'} \delta_{\gamma\gamma'} \,, \qquad
 \La \mu X_{\tau, \gamma}, \,\, X_{\tau', \gamma'} \Ra_v =  0 \,\, \text{if $\tau \neq \tau'$ or $\gamma \neq \gamma'$} \,, \\
 \La \mu X_{0, j}, \,\, X_{0, k} \Ra_v = 0 \,, \quad 
 \La \mu X_{+, j}, \,\, X_{+, i} \Ra_v > 0 \,, \qquad
 \La \mu X_{-, j},  \,\, X_{-, j} \Ra_v < 0 \,, 
\end{aligned}
\end{equation*}
where $\tau \in \{+, -, 0\}$, $\gamma \in \{i,j,k\}$, $1 \leq i \leq \nu_+$, $1 \leq j \leq \nu_-$, and $1 \leq k \leq \nu_0$. 

Our method relies on full coercivity of the collision/scattering operator on $(L^2(a\dsigma))^m$ instead of the partial one in \eqref{coerv-1} on $(\NullL)^\perp$. Hence, instead of working directly with $\CalL$, we add in the damping terms on the modes in $\NullL$ and define the damped linear operator $\CalL_d$ as
\begin{equation}
\begin{aligned} \label{def:L-d-1}
  \CalL_d \Vecf 
 =& \CalL \Vecf + \alpha  \sum_{k=1}^{\nu_+} 
         \mu \VecX_{+,k} \viint{\mu \VecX_{+,k}}{\Vecf}_v
     + \alpha \sum_{k=1}^{\nu_-} 
           \mu \VecX_{-,k} \viint{\mu \VecX_{-,k}}{\Vecf}_v
\\
  &  + \alpha \sum_{k=1}^{\nu_0} 
           \mu \VecX_{0,k} \viint{\mu \VecX_{0,k}}{\Vecf}_v                          
       + \alpha \sum_{k=1}^{\nu_0}
              \mu \CalL^{-1}(\mu \VecX_{0,k})
             \viint{\mu \CalL^{-1}(\mu \VecX_{0,k})}{\Vecf}_v \,,
\end{aligned}
\end{equation}
where $\alpha > 0$ is some constant damping coefficient to be
determined later. The motivation of defining $\CalL_d$ in such a form
is as follows: the operator $\CalL$ normally will provide bounds for
the orthogonal component of $\Vecf$ in $(\NullL)^\perp$. With the
added damping terms to dissipate the modes in $\NullL$, we expect that
$\CalL_d$ will satisfy certain full coercivity condition on
$(L^2(\dsigma))^m$. On the other hand, this added damping effect can
be eventually removed using linearity of the equations.  The precise
assumption of $\CalL$ regarding its coercivity states
\begin{itemize}
\item[({\bf \PL5})] There exist two constants $\alpha, \sigma_0> 0$ such that the damped operator $\CalL_d$ satisfies
\begin{align} \label{cond:coercivity-L}
     \int_{\VV} \Vecf \cdot \CalL_d \Vecf \dsigma
\geq \sigma_0 \norm{\Vecf}_{(L^2(a\dsigma))^m}^2
\end{align}
for any $\Vecf \in (L^2(a\dsigma))^m$. 
\end{itemize}
It will be shown in Lemma~\ref{lem:assump-L-1} that the coercivity
condition in \eqref{coerv-1} combined with the form of $\CalL_d$ in
\eqref{def:L-d-1} implies (\PL5), and hence (\PL5) is a natural
assumption for many examples.

\subsection{Main Assumptions for $\CalK$} \label{sec:K} In this part
we specify conditions for the boundary operator $\CalK$.  These
conditions are stated in rather general forms and are satisfied by a
large class of boundary operators.  Recall that we have denoted
$v = (\mu, \bar v) = (\mu, v_2, \cdots, v_d)$. Denote the incoming and
outgoing parts of the velocity space as
\begin{align*}
   \VV_+ = \{v = (\mu, \bar v) | \, \mu > 0\} \,,
   \quad \text{and} \quad
   \VV_- = \{v = (\mu, \bar v) | \, \mu < 0\} \,.
\end{align*}

We consider the general case where the boundary operator $\CalK$ consists of various types of reflections in the sense that there exists a coefficient
  $\alpha_r \in [0, 1)$ and a scattering kernel $k_r$ (which is a positive measure) such that 
  \begin{align}
    \CalK = \alpha_r \CalK_r \,,
 \qquad
   [\CalK_r \Vecf \,](v) 
    = \int_{\mu'<0} k_r(v, v') \Vecf(v') \dsigma(v')
    \qquad \text{for } v  \in \VV_+ \,. \label{def:K-form} 
  \end{align}

The main assumption for such $\CalK$ is
\begin{itemize}
\item[(\PK)] The reflection operator $\CalK_r$ satisfies that
  \begin{align} \label{bound:K-i}
    \int_{\mu > 0} \mu \abs{\CalK_r \Vecf \,}^2 \dsigma
    \leq 
    \int_{\mu < 0} \abs{\mu} \abs{\Vecf \,}^2 \dsigma \,.
  \end{align}
\end{itemize}

There is a large family of reflection boundary operators $\CalK_r$ that satisfy (\PK). In the literature, the reflection boundary operator for nonlinear kinetic equations of a single species is usually written as
\begin{align*}
   [\mathscr{K}_r F] (v) 
   = \frac{1}{\mu} \int_{\mu'<0} \mu' R(v, v') F(v') \dv' \,,
\qquad
   v \in \VV^+ 
\end{align*}
for some scattering kernel $R$. If we consider the linearization around the equilibrium state such that 
\begin{align*}
      F = M + Mf \,,
\end{align*}
then the linearized version has the form
\begin{align}
   [\CalK_r f] (v) 
  = \frac{M^{-1}(v)}{\mu} \int_{\mu'<0} \mu' R(v, v') f(v') M(v') \dv'
\label{def:K-d-example}
\end{align}
We show in the following lemma that as long as $\CalK_r$ satisfies the classical normalization and reciprocity conditions, then the main assumption (\PK) holds:
\begin{lem} \label{lem:suff-cond-PK} Suppose $M$ is a scalar
  equilibrium state and $\dsigma = M \dv$ where $\dv$ is the Lebesgue
  measure. Suppose $\CalK_r$ has the form as
  in~\eqref{def:K-d-example}.
If $R$ satisfies the normalization and reciprocity conditions:
\begin{align} 
   |\mu'| M(v') R(v, v') &= |\mu| M(v) R(-v', -v) \,, 
\quad
   v \in \VV^+ \,, v' \in \VV^- \,, \label{cond:normal}
\\
    \int_{\mu > 0} R(v, v') & \dv = 1 \,, \label{cond:recip}
\qquad
   v' \in \VV^- \,,
\end{align}
then (\PK) holds. 
\end{lem}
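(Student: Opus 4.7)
The strategy is a Cauchy--Schwarz argument on the integral kernel representation of $\CalK_r$, with the normalization condition \eqref{cond:recip} used on one side as a measure and the reciprocity condition \eqref{cond:normal} used on the other side to convert it back.

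First, I would rewrite the expression for $\mu\,\CalK_r f$. From \eqref{def:K-d-example},
\begin{equation*}
   \mu [\CalK_r f](v)
    = M^{-1}(v) \int_{\mu' < 0} \mu' R(v,v') f(v') M(v') \dv' \,,
\end{equation*}
which suggests treating $|\mu'| M(v') R(v,v') \dv'$ as a positive measure on $\VV^-$ (depending on the parameter $v \in \VV^+$). The key preliminary computation is to evaluate the total mass of this measure: using the reciprocity condition \eqref{cond:normal} followed by the change of variables $w = -v'$, $u = -v$, one has
\begin{equation*}
    \int_{\mu'<0} |\mu'| M(v') R(v,v') \dv'
    = |\mu| M(v) \int_{\mu'<0} R(-v',-v) \dv'
    = |\mu| M(v) \int_{w_1 > 0} R(w, u) \, \mathrm{d}w
    = |\mu| M(v) \,,
\end{equation*}
where in the last step I use the normalization \eqref{cond:recip} applied to $u \in \VV^-$.

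With this in hand, Cauchy--Schwarz against $|\mu'| M(v') R(v,v') \dv'$ gives
\begin{equation*}
    \mu^2 |\CalK_r f(v)|^2 M(v)^2
    \leq \bigl(|\mu| M(v) \bigr)
          \int_{\mu'<0} |f(v')|^2 |\mu'| M(v') R(v,v') \dv' \,,
\end{equation*}
so after dividing by $\mu\, M(v)$ (valid since $\mu > 0$ on $\VV^+$),
\begin{equation*}
    \mu |\CalK_r f(v)|^2 M(v)
    \leq \int_{\mu'<0} |f(v')|^2 |\mu'| M(v') R(v,v') \dv' \,.
\end{equation*}

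Finally, I would integrate over $v \in \VV^+$ and apply Fubini together with the normalization \eqref{cond:recip} (now in its original form, integrating $R(v,v')$ over $\mu > 0$) to obtain
\begin{equation*}
    \int_{\mu>0} \mu |\CalK_r f|^2 M \dv
    \leq \int_{\mu'<0} |f(v')|^2 |\mu'| M(v') \dv' \,,
\end{equation*}
which is exactly (\PK) since $\dsigma = M\dv$. There is no serious obstacle here; the only subtlety is keeping track of signs and of the direction in which each of the two conditions \eqref{cond:normal} and \eqref{cond:recip} is applied --- reciprocity is used to compute the Cauchy--Schwarz prefactor, while normalization is used after swapping the order of integration.
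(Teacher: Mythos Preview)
Your proof is correct and follows essentially the same approach as the paper: a Cauchy--Schwarz estimate on the kernel, using reciprocity \eqref{cond:normal} to evaluate the prefactor (the total mass of the measure $|\mu'|M(v')R(v,v')\dv'$) and normalization \eqref{cond:recip} after Fubini to conclude. The only cosmetic difference is that you compute the total mass $|\mu|M(v)$ first and then apply Cauchy--Schwarz, whereas the paper applies Cauchy--Schwarz with a direct splitting of the integrand and then simplifies the resulting second factor; the underlying argument is identical.
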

\begin{proof}
Note that an immediate variation of~\eqref{cond:recip} is 
\begin{align}
    \int_{\mu'<0} R(-v', -v) & \dv' = 1 \,,
\qquad
    v \in \VV^+ \,. \label{cond:recip-1}
\end{align}
By the definition of $\CalK_r$, we have
\begin{align*}
   \int_{\mu>0} \mu \abs{\CalK_r f}^2 M(v) \dv
& = \int_{\mu>0} \frac{M^{-1}(v)}{\mu}
      \vpran{\int_{\mu'<0} \mu' R(v, v') f(v') M(v') \dv'}^2 \dv
\\
& \hspace{-3cm}\leq 
    \int_{\mu>0}
       \vpran{\int_{\mu'<0} |\mu'| R(v, v') f^2(v') M(v') \dv'}
       \vpran{\int_{\mu'<0} \frac{|\mu'|}{\mu} R(v, v') M(v') M^{-1}(v) \dv'} \dv
\\
& \hspace{-3cm}
   \stackrel{\eqref{cond:normal}}{=}
    \int_{\mu>0}
       \vpran{\int_{\mu'<0} |\mu'| R(v, v') f^2(v') M(v') \dv'}
       \vpran{\int_{\mu'<0} R(-v', -v) \dv'} \dv
\\
& \hspace{-3cm}
   \stackrel{\eqref{cond:recip-1}}{=}
    \int_{\mu>0}
       \int_{\mu'<0} |\mu'| R(v, v') f^2(v') M(v') \dv'\dv
\\
& \hspace{-3cm}
   = \int_{\mu'<0} \vpran{|\mu'|  f^2(v') M(v')}
       \vpran{\int_{\mu'<0}R(v, v') \dv} \dv'
    \stackrel{\eqref{cond:recip}}{=}
    \int_{\mu'<0} |\mu'|  f^2(v') M(v') \dv' \,.
\end{align*}
The condition (\PK) follows as $\dsigma = M \dv$ in this case. 
\end{proof}

Examples that satisfy \eqref{cond:normal} and \eqref{cond:recip} include 
\begin{itemize}
\item the specular reflection condition where $R(v, v') = \delta(\mu + \mu') \delta(\bar v - \bar v')$;

\item the bounce-back condition where $R(v, v') = \delta(v+v')$;

\item the pure diffuse condition for BGK or linearized Boltzmann equation where $R(v, v') = \frac{\mu}{(2\pi)^{\frac{d-1}{2}}} e^{-\frac{|v|^2}{2}}$;

\item convex combinations of the above three; and more generally, 

\item the (linearized) Cercignani-Lampis collision operator with $R$
  given by 
  \begin{align*}
    R(v, v') = \frac{1}{2\pi \alpha_n \alpha_t (2-\alpha_t)}
    \exp\vpran{-\frac{\mu^2 + (1 - \alpha_n) (\mu')^2}{2\alpha_n}}
    \exp\vpran{-\frac{|\bar v - (1-\alpha_t) \bar v'
        |^2}{2\alpha_t(2-\alpha_t)}} J_0\vpran{\frac{\sqrt{1-\alpha_n}
        \mu \mu'}{\alpha_n}} \,,
  \end{align*}
  where $0 < \alpha_n < 1$, $0 < \alpha_t < 2$, and
  \begin{align*}
    J_0(x) = \frac{1}{2\pi}\int_0^{2\pi} e^{x \cos\phi} {\, \rm d}\phi  \,.
  \end{align*}
\end{itemize}
Hence our method applies to all of these classical cases for single
species.

\begin{rmk} \label{rmk:Maxwell-BC}
In all of our numerical examples in Section~\ref{section:numerics}, we use either the Dirichlet boundary condition with given incoming data or the classical Maxwell boundary condition where
\begin{align*}
    \CalK = \alpha_d \CalK_d + \alpha_s \CalK_s \,
\end{align*}
with the accommodation coefficients $\alpha_d, \alpha_s$ satisfying 
\begin{align*}
   \alpha_d, \alpha_s \geq 0  \,,
\qquad
   0 \leq \alpha_d + \alpha_s < 1 \,.
\end{align*}
The two operators $\CalK_d, \CalK_s$ are the diffuse and specular reflection operators respectively. In terms of the notation in~\eqref{def:K-form}, we can choose in this case
\begin{align*}
    \alpha_r = \alpha_d + \alpha_s \,,
\qquad
   \CalK_r 
   = \frac{\alpha_d}{\alpha_r} \CalK_d  
      + \frac{\alpha_s}{\alpha_r} \CalK_s   \,.
\end{align*}
Since $\CalK_s$ automatically satisfies (\PK) with an equal sign, we only need to check in each numerical example that $\CalK_d$ satisfies (\PK) as well. 
\end{rmk}

Below we state two essential consequences of assumption ({\PK}), which will guarantee the well-posedness of the half-space equation and provide the foundation for the numerical scheme.
\begin{lem} \label{lem:uniqueness} Suppose $\CalK$ satisfies  ({\PK}). Then the half-space equation~\eqref{eq:kinetic} has at
  most one solution $\Vecf \in C([0, \infty); (L^2(|\mu|
  \dsigma))^m)$. 
\end{lem}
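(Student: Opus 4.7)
The plan is a standard energy/trace argument adapted to the general boundary operator $\CalK$. Let $\Vecf_1, \Vecf_2$ be two solutions and set $\Vecf = \Vecf_1 - \Vecf_2$. Then $\Vecf$ solves the homogeneous problem
\begin{equation*}
  \mu \partial_x \Vecf + \CalL \Vecf = 0, \qquad
  \Vecf|_{\mu>0} = \CalK (\Vecf|_{\mu<0}) \text{ at } x=0, \qquad
  \lim_{x\to\infty} \Vecf \in H^+ \oplus H^0.
\end{equation*}
The regularity $\Vecf \in C([0,\infty); (L^2(|\mu|\dsigma))^m)$ is exactly what is needed to make sense of the boundary traces and of the flux $\viint{\mu \Vecf}{\Vecf}_v$ pointwise in $x$.

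First, I would test the PDE against $\Vecf$ in $v$ to obtain
\begin{equation*}
  \tfrac{1}{2}\,\partial_x \viint{\mu \Vecf}{\Vecf}_v + \viint{\CalL \Vecf}{\Vecf}_v = 0.
\end{equation*}
By (\PL4) the second term is nonnegative, so $E(x) := \viint{\mu \Vecf}{\Vecf}_v$ is nonincreasing on $[0,\infty)$. At $x=0$, the boundary condition together with $\CalK = \alpha_r \CalK_r$ and (\PK) give
\begin{equation*}
  E(0) = \int_{\mu>0}\!\mu |\CalK \Vecf|^2 \dsigma - \int_{\mu<0}\!|\mu| |\Vecf|^2 \dsigma
  \leq (\alpha_r^2 - 1) \int_{\mu<0}\!|\mu|\,|\Vecf|^2 \dsigma \leq 0,
\end{equation*}
since $\alpha_r \in [0,1)$. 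At $x=\infty$, expand $\Vecf_\infty = \sum_i a_i X_{+,i} + \sum_k c_k X_{0,k}$; using the orthogonality relations for the eigenfunctions of $\CalP_1$ (in particular $\viint{\mu X_{0,k}}{X_{0,l}}_v = 0$ since $X_{0,k}$ lies in the kernel of $\CalP_1$, and the cross terms vanish), one finds $E(\infty) = \sum_i a_i^2 \viint{\mu X_{+,i}}{X_{+,i}}_v \geq 0$. Monotonicity then forces $0 \geq E(0) \geq E(\infty) \geq 0$, so $E \equiv 0$, and every inequality above is an equality.

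The equality at $x=0$ yields $\Vecf|_{\mu<0,\,x=0} = 0$ and, through the boundary condition, $\Vecf|_{\mu>0,\,x=0} = \CalK(\Vecf|_{\mu<0,\,x=0}) = 0$; so $\Vecf|_{x=0} \equiv 0$. The equality at $x=\infty$ further gives $a_i = 0$ for all $i$, so $\Vecf_\infty \in H^0$. Integrating the energy identity in $x$ from $0$ to $\infty$ now gives $\int_0^\infty \viint{\CalL \Vecf}{\Vecf}_v\,\dx = 0$, and since $\CalL$ is selfadjoint and nonnegative (so $\viint{\CalL \Vecf}{\Vecf}_v = \|\CalL^{1/2}\Vecf\|^2$), this forces $\CalL \Vecf(x,\cdot) = 0$ for a.e.\ $x$. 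Plugging back into the PDE yields $\mu \partial_x \Vecf = 0$, hence $\Vecf(x,v) = \Vecf(0,v) = 0$ for $\mu \neq 0$, which proves uniqueness in $(L^2(|\mu|\dsigma))^m$.

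The only delicate point, and in my view the main obstacle, is the rigorous manipulation of the boundary terms at $x=0$ and the limit at $x=\infty$ under the modest regularity $\Vecf \in C([0,\infty); (L^2(|\mu|\dsigma))^m)$: one has to justify that $E(x)$ really is continuous up to $x=0$ and that $E(x) \to E(\infty)$ as $x \to \infty$ (rather than merely a liminf bound). These can be handled by a standard density/approximation in $v$ and by using that $\Vecf(x,\cdot) \to \Vecf_\infty$ strongly in $(L^2(|\mu|\dsigma))^m$, which is part of the assumed solution class together with the decay imposed by the endstate condition.
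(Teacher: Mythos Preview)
Your proof is correct and follows essentially the same energy argument as the paper: both show that $E(x) = \viint{\mu \Vecf}{\Vecf}_v$ is nonincreasing, sandwich it between $E(0)\le 0$ (from (\PK) with $\alpha_r<1$) and $E(\infty)\ge 0$ (from $\Vecf_\infty\in H^+\oplus H^0$), and conclude $\Vecf(0,\cdot)=0$. The only difference is the closing step: once $\Vecf(0,\cdot)=0$, the paper simply invokes the known uniqueness result of Coron--Golse--Sulem for the pure incoming case $\alpha_r=0$, whereas you finish the argument directly by deducing $\CalL\Vecf\equiv 0$ from $\int_0^\infty\viint{\CalL\Vecf}{\Vecf}_v\,\dx=0$ and then $\mu\partial_x\Vecf=0$. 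Your route is more self-contained; the paper's is shorter given the cited reference.
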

\begin{proof}
By the linearity of the equation we only need to prove that if $h = 0$ in the boundary condition of~\eqref{eq:kinetic}, then the only solution to~\eqref{eq:kinetic} is zero. By the non-negativity of $\CalL$, we have $\int_\VV \mu \big|\Vecf \, \big|^2(x, \cdot) \dsigma$ is decreasing in~$x$. Since $\Vecf_\infty \in H^+ \oplus H^0$, we have $\int_\VV \mu \, \big|\Vecf_\infty \big|^2 \dsigma \geq 0$. Hence 
$\int_\VV \mu \, \big|\Vecf\big|^2(x, \cdot) \dsigma \geq 0$ for all $x \geq 0$. In particular this shows 
\begin{equation}\label{eq:nonneg}
  \int_\VV \mu \big|\Vecf\big|^2(0, \cdot) \dsigma \geq 0.
\end{equation} 

By assumption (\PK),
at the boundary $x=0$ we have
\begin{align*}
   \int_{\mu > 0} \mu \abs{\Vecf \, }^2 \dsigma 
&   = \int_{\mu > 0} \mu \abs{\alpha_r \CalK_r(\Vecf\vert_{\mu<0})\,}^2 \dsigma 
\leq
  \alpha_r^2
  \int_{\mu < 0} \abs{\mu} \abs{\Vecf \,}^2 \dsigma \,.
\end{align*} 
Therefore, 
\begin{align*}
  \int_\VV \mu \abs{\Vecf \,}^2(0, \cdot) \dsigma
  \leq 
  - \vpran{1 - \alpha_r^2}
  \int_{\mu < 0} \abs{\mu} \abs{\Vecf \,}^2(0, \cdot) \dsigma
  \leq 0 \,.
\end{align*}
By \eqref{eq:nonneg} and that
$1 - \alpha_r^2 > 0$, we deduce that
\begin{align*}
     \int_{\mu < 0} \mu \abs{\Vecf \,}^2(0, \cdot) \dsigma = 0 \,,
\quad \text{and hence} \quad
     \int_{\mu > 0} \mu \abs{\Vecf \,}^2(0, \cdot) \dsigma = 0 \,.
\end{align*}
Therefore, at $x=0$ we have $f(0, \cdot) \equiv 0$. By the uniqueness of
solutions to~\eqref{eq:kinetic} with only the incoming data
\cite{CoronGolseSulem:88} (that is, $\alpha_r = 0$), we
have that~\eqref{eq:kinetic} has at most one solution.
\end{proof}

\begin{rmk}
The assumption that $\alpha_r < 1$ in~\eqref{def:K-form} is necessary for the uniqueness of the solution. For example, if $\alpha_r = 1$ or $\alpha_d + \alpha_s = 1$ in the boundary operator $\CalK$ for the linear transport equation in \eqref{def:K} considered in our numerical examples, then any multiple of $X_0$ is a solution to the half-space equation with zero incoming data. 
\end{rmk}

The second consequence of assumption (\PK) is 
\begin{lem} \label{lem:bar-K-invertible}
Suppose the measure $\dsigma$ in the velocity space is symmetric with respect to $\mu$. Define the operator $\bar\CalK: (L^2(\mu \One_{\mu > 0} \dsigma))^m \to (L^2(\mu \One_{\mu > 0} \dsigma))^m$ such that
\begin{align} \label{def:bar-K}
    \bar\CalK = \alpha_r \bar\CalK_r \,,
\end{align}
where 
$\bar\CalK_r$ is defined as 
\begin{align*}
    (\bar\CalK_r \Vecf)(\mu, \bar v) = \int_{\mu' > 0} k_r\bigl((\mu, \bar v'), (-\mu', \bar{v}')\bigr) \Vecf(\mu', \bar v) \dsigma(\mu', \bar v') \,,
\qquad \mu > 0 \,,
\end{align*}
where $k_r$ is the reflection kernel of $\CalK_r$. Note that we have
reflected the $\mu'$ component of $v'$ in the
kernel. 
Then
\begin{itemize}

\item[(a)] $\I + \bar\CalK$ is invertible on $(L^2(\mu \One_{\mu > 0} \dsigma))^m$.  \smallskip

\item[(b)] There exists a constant $\beta_1 > 0$ such that the operator $(\I + \bar\CalK)^{-1}(\I - \bar\CalK)$ satisfies that 
\begin{align} \label{posit-bdry}
   \viint{\mu \Vecf}{(\I + \bar\CalK)^{-1} (\I - \bar\CalK) \Vecf}_{\mu>0}
\geq \beta_1 \viint{\mu \Vecf}{\Vecf}_{\mu>0} 
\end{align}
for any $\Vecf \in (L^2(\mu \One_{\mu>0} \dsigma))^m$.

\end{itemize}
\end{lem}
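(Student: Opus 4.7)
The plan is to recast both statements as consequences of a single contraction estimate: $\bar\CalK_r$ has operator norm at most $1$ on the Hilbert space $H := (L^2(\mu \One_{\mu>0} \dsigma))^m$ equipped with the inner product $\viint{\Vecf}{\Vecg}_H := \viint{\mu \Vecf}{\Vecg}_{\mu>0}$, so that $\bar\CalK = \alpha_r \bar\CalK_r$ is a strict contraction with norm at most $\alpha_r < 1$. To establish this contraction bound, given $\Vecf \in H$ I would reflect it across $\mu = 0$, setting $\Vecg(\mu', \bar v') := \Vecf(-\mu', \bar v')$ for $\mu' < 0$. The change of variables $\mu' \mapsto -\mu'$ in the integral defining $\bar\CalK_r \Vecf$, together with the $\mu$-symmetry of $\dsigma$, then yields $(\bar\CalK_r \Vecf)(\mu, \bar v) = (\CalK_r \Vecg)(\mu, \bar v)$ for $\mu > 0$. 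Applying (\PK) to $\Vecg$ and converting the resulting integral over $\mu < 0$ back to one over $\mu > 0$ gives
\begin{equation*}
  \int_{\mu>0} \mu \abs{\bar\CalK_r \Vecf}^2 \dsigma
  = \int_{\mu>0} \mu \abs{\CalK_r \Vecg}^2 \dsigma
  \leq \int_{\mu<0} |\mu|\abs{\Vecg}^2 \dsigma
  = \int_{\mu>0} \mu \abs{\Vecf}^2 \dsigma \,,
\end{equation*}
which is precisely $\|\bar\CalK_r\|_{H\to H} \leq 1$.

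Part (a) is then immediate: since $\|\bar\CalK\|_{H\to H} \leq \alpha_r < 1$, the Neumann series $\sum_{n \geq 0}(-\bar\CalK)^n$ converges in operator norm on $H$ to an inverse of $\I + \bar\CalK$.

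For part (b), I would exploit that $\bar\CalK$ commutes with $\I + \bar\CalK$ and hence with $(\I + \bar\CalK)^{-1}$, so $(\I+\bar\CalK)^{-1}(\I-\bar\CalK) = (\I-\bar\CalK)(\I+\bar\CalK)^{-1}$. Substituting $\Vecu := (\I+\bar\CalK)^{-1} \Vecf$, so that $\Vecf = (\I+\bar\CalK)\Vecu$, the left-hand side of \eqref{posit-bdry} equals
\begin{equation*}
  \viint{(\I+\bar\CalK)\Vecu}{(\I-\bar\CalK)\Vecu}_H
  = \|\Vecu\|_H^2 - \viint{\Vecu}{\bar\CalK \Vecu}_H + \viint{\bar\CalK \Vecu}{\Vecu}_H - \|\bar\CalK \Vecu\|_H^2 \,.
\end{equation*}
Since the inner product is real and symmetric in its two arguments, the two cross terms coincide and cancel, leaving $\|\Vecu\|_H^2 - \|\bar\CalK \Vecu\|_H^2 \geq (1-\alpha_r^2)\|\Vecu\|_H^2$ by the contraction bound. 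Combining with $\|\Vecf\|_H \leq (1+\alpha_r)\|\Vecu\|_H$ gives \eqref{posit-bdry} with the explicit constant $\beta_1 = (1-\alpha_r)/(1+\alpha_r) > 0$.

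The only non-routine step is the reflection identification $\bar\CalK_r \Vecf = \CalK_r \Vecg$ that transfers (\PK) from $\CalK_r$ to $\bar\CalK_r$; this is where the $\mu$-symmetry of $\dsigma$ is essential. Once that contraction is in hand, parts (a) and (b) reduce to a Neumann series and to a cancellation of cross terms in the real inner product, with no further use of the detailed structure of the kernel $k_r$.
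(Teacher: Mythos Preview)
Your proof is correct and follows essentially the same route as the paper: the reflection $\Vecg(\mu',\bar v') = \Vecf(-\mu',\bar v')$ to transfer (\PK) to $\bar\CalK_r$, the resulting strict contraction $\|\bar\CalK\|\leq\alpha_r<1$ for part~(a), and for part~(b) the substitution $\Vecu=(\I+\bar\CalK)^{-1}\Vecf$ followed by cancellation of the cross terms in $\viint{(\I+\bar\CalK)\Vecu}{(\I-\bar\CalK)\Vecu}_H$ to obtain $\beta_1=(1-\alpha_r)/(1+\alpha_r)$. The only cosmetic difference is that you invoke commutativity of $\bar\CalK$ with $(\I+\bar\CalK)^{-1}$ explicitly before substituting, whereas the paper substitutes directly and lets the commutativity act inside the computation.
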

\begin{proof} 

\noindent (a) Denote $\Vecg_1(\mu', \bar v') = \Vecf(-\mu', \bar v')$. 
Then $\bar\CalK_r \Vecf = \CalK_r \Vecg_1$ by the symmetry of $\dsigma$ with respect to $\mu$. Hence, 
\begin{align*}
    \int_{\mu > 0} \mu \abs{\bar\CalK_r \Vecf \,}^2 \dsigma
= \int_{\mu > 0} \mu \abs{\CalK_r \Vecg_1 \,}^2 \dsigma
\leq 
   \int_{\mu < 0} \abs{\mu} \abs{\Vecg_1 \,}^2 \dsigma
= \int_{\mu > 0} \mu \abs{\Vecf \,}^2 \dsigma \,.
\end{align*}
Therefore,
\begin{align}\label{bound:I-bar-K-inv}
    \norm{\bar\CalK}_{\mc{L}((L^2(\mu \One_{\mu > 0} \dsigma))^m)} 
\leq \alpha_r
< 1 \,.
\end{align}
This shows $\I + \bar\CalK$ is invertible on $(L^2(\mu \One_{\mu > 0} \dsigma))^m$. Furthermore, we have the bound 
\begin{equation*}
  \norm{(\I + \bar\CalK)^{-1}}_{\mc{L}((L^2(\mu \One_{\mu > 0} \dsigma))^m)} \leq \bigl(1 - \norm{\bar\CalK}_{\mc{L}((L^2(\mu \One_{\mu > 0} \dsigma))^m)} \bigr)^{-1} 
  \stackrel{\eqref{bound:I-bar-K-inv}}{\leq} (1 - \alpha_r)^{-1}.
\end{equation*}

\smallskip

\noindent (c) Denote $\Vecg_2 = (\I + \bar\CalK)^{-1} \Vecf \in (L^2(\mu \One_{\mu>0} \dsigma))^m$.  Then
\begin{equation*}
  \begin{aligned}
  \viint{\mu \Vecf}{(\I + \bar\CalK)^{-1} (\I - \bar\CalK) \Vecf}_{\mu>0}
& = \viint{\mu (\I + \bar\CalK)\Vecg_2}{(\I - \bar\CalK) \Vecg_2}_{\mu>0}
= \viint{\mu \Vecg_2}{\Vecg_2}_{\mu > 0}
   -  \viint{\mu \bar\CalK \Vecg_2}{\bar\CalK \Vecg_2}_{\mu>0} \\
   & = \norm{\Vecg_2}^2_{(L^2(\mu \One_{\mu > 0} \dsigma))^m} - \norm{\bar\CalK \Vecg_2}^2_{(L^2(\mu \One_{\mu > 0} \dsigma))^m} \\ &\stackrel{\eqref{bound:I-bar-K-inv}}{\geq} \bigl( 1 - \alpha_r^2 \bigr) \norm{\Vecg_2}^2_{(L^2(\mu \One_{\mu > 0} \dsigma))^m}. 
\end{aligned}
\end{equation*}
Observe that 
\begin{equation*}
  \norm{\Vecf}_{(L^2(\mu \One_{\mu > 0} \dsigma))^m} \leq \norm{(\I + \bar\CalK)}_{\mc{L}((L^2(\mu \One_{\mu > 0} \dsigma))^m)} \norm{\Vecg_2}_{(L^2(\mu \One_{\mu > 0} \dsigma))^m} \stackrel{\eqref{bound:I-bar-K-inv}}{\leq}
(1 + \alpha_r) \norm{\Vecg_2}_{(L^2(\mu \One_{\mu > 0} \dsigma))^m}. 
\end{equation*}
We conclude by combining the previous two estimates such that 
\begin{equation*}
  \viint{\mu \Vecf}{(\I + \bar\CalK)^{-1} (\I - \bar\CalK) \Vecf}_{\mu>0}
  \geq \bigl( 1 - \alpha_r^2 \bigr) (1 + \alpha_r)^{-2} 
  \norm{\Vecf}_{(L^2(\mu \One_{\mu > 0} \dsigma))^m}.  
\end{equation*}
Hence \eqref{posit-bdry} holds with
$\beta_1 = \bigl( 1 - \alpha_r^2 \bigr) (1 + \alpha_r)^{-2}$.
\end{proof}

\section{Well-posedness} \label{sec:well-posedness}
In this section we establish the well-posedness of equation~\eqref{eq:kinetic} based on the assumptions for $\CalL$ and $\CalK$ in the previous section. The framework is similar to \cite{LiLuSun}: first we add damping terms to $\CalL$  and show that the damped equation has a unique solution. This will be achieved by using the Babu\v{s}ka-Aziz lemma. Then we show how to recover the solution to the original kinetic equation using suitable superpositions with special solutions.  

The damped kinetic equation has the form
\begin{equation}
\begin{aligned} \label{eq:damped}
     \mu &\del_x \Vecf + \CalL_d \Vecf  = 0 \,,
\\
     \Vecf \big|_{\mu>0} & =\, \Vech + \CalK (\Vecf \big|_{\mu<0}) \,, \qquad &&\mu > 0 \,,
\\
     &\Vecf \to 0 \,, &&\text{as $x \to \infty$},
\end{aligned}
\end{equation}
where the damped operator $\CalL_d$ is defined in~\eqref{def:L-d-1}.

\subsection{Weak Formulation} \label{sec:weak-form} In order to show the well-posedness of~\eqref{eq:damped}, we consider the weak formulation of the equation using the even-odd decomposition. Recall that we have denoted $\Vd  = (v_2, \cdots, v_d)$. For any scalar function $g(\mu, \bar v)$, let $g^+, g^-$ be its even and odd parts (with respect to $\mu$) respectively such that
\begin{align*}
   g^+(\mu, \Vd) = \frac{g(\mu, \Vd) + g(-\mu, \Vd)}{2} \,,
\qquad
   g^-(\mu, \Vd) = \frac{g(\mu, \Vd) - g(-\mu, \Vd)}{2} \,,
\end{align*}
Therefore we have
\begin{align*}
    g^+(\mu, \Vd) + g^-(\mu, \Vd) = g(\mu, \Vd) \,, 
\qquad 
     g^+(\mu, \Vd) - g^-(\mu, \Vd) = g(-\mu, \Vd) \,.
\end{align*}
For the vector-valued function $\Vecf$, denote
\begin{align*}
   \Vecf^+ = (f_1^+, \, f_2^+, \, \cdots, \, f_d^+)^T \,,
\qquad
   \Vecf^- = (f_1^-, \, f_2^-, \, \cdots, \, f_d^-)^T \,.
\end{align*}
The solution space for~\eqref{eq:kinetic} and~\eqref{eq:damped} is 
\begin{align*}
    \Gamma =  \{ \Vecf \in (L^2(\dsigma\dx))^m \; \big| \; \mu \del_x \Vecf^+ \in (L^2(\dsigma_1\dx))^m \} \,. 
\end{align*}
for some $\dsigma_1$ such that the term $\int_{\VV} \Vecf \cdot \mu \del_x \Vecf^+$ is well-defined. The norm in $\Gamma$ is defined as 
\begin{align} \label{def:norm-Gamma}
    \norm{\Vecf}_\Gamma
    = \norm{\Vecf}_{(L^2(\dsigma\dx))^m}
       + \norm{\mu \del_x \Vecf^+}_{(L^2(\dsigma_1\dx))^m}\,.
\end{align}
One example of $\dsigma, \dsigma_1$ is that $\dsigma = a(v) \dv$ and $\dsigma_1 = \frac{1}{a} \dv$ where $a(v)$ is the attenuation coefficient defined in \eqref{def-a}. For the operator defined in \eqref{def:L-multi-freq}, we have $\dsigma_1 = \dsigma$.  

This type of solution space $\Gamma$ with mixed regularity is introduced in \cite{EggerSchlottbom:12}. For a general function $\Vecf \in \Gamma$, the trace of $\mu \Vecf^+$ at $x=0$ is well-defined while the trace of $\mu \Vecf^-$ may not.
Due to this lack of regularity for $\Vecf^-$, when deriving the weak formulation we will represent $\Vecf^-$ in terms of $\Vecf^+$ on the boundary.  Recall that the boundary condition is given by 
\begin{align*}
     \Vecf \big|_{\mu > 0} 
    = \Vech + \CalK \vpran{\Vecf \big|_{\mu < 0}}  \,.
\end{align*}
Using the even-odd decomposition, we have 
\begin{align*}
   \vpran{\Vecf^+ + \Vecf^-} \big|_{\mu > 0}
& = \Vech
    + \CalK(\Vecf^+ \big|_{\mu<0})
    + \CalK(\Vecf^- \big|_{\mu<0})
\\
& = \Vech
    + \alpha_r \int_{\mu'<0} k_r((\mu, \bar v), (\mu', \bar v')) \Vecf^+(\mu') \dsigma
    + \alpha_r \int_{\mu'<0} k_r((\mu, \bar v), (\mu', \bar v')) \Vecf^-(\mu') \dsigma
\\
& = \Vech
    + \alpha_r \int_{\mu'>0} k_r((\mu, \bar v), (-\mu', \bar v')) \Vecf^+(\mu') \dsigma
    - \alpha_r \int_{\mu'>0} k_r((\mu, \bar v), (-\mu', \bar v')) \Vecf^-(\mu') \dsigma
\\
& = \Vech
      + \bar\CalK \Vecf^+
      - \bar\CalK \Vecf^- \,, 
\end{align*}
where $\bar\CalK$ is defined in~\eqref{def:bar-K}. Note that in order to get the third line  we have used that $\dsigma$ is symmetric with respect to $\mu$.
Hence, the boundary condition has been reformulated as
\begin{align*}
    (\I + \bar\CalK) \Vecf^- = \Vech - (\I - \bar\CalK) \Vecf^+ \,,
\qquad  \mu > 0 \,.
\end{align*}
For any operator $\CalK$ that satisfies assumption (\PK), we have shown in Lemma~\ref{lem:bar-K-invertible} that $\I + \bar\CalK$ is invertible on $(L^2(\mu \One_{\mu > 0} \dsigma))^m$. Thus, $\Vecf^-$ is related to $\Vecf^+$ as
\begin{align} \label{eq:f-neg-bdry}
   \Vecf^- \big|_{\mu>0} 
   = (\I + \bar\CalK)^{-1} \Vech
      - (\I + \bar\CalK)^{-1} (\I - \bar\CalK) \Vecf^+ \big|_{\mu>0} \,.
\end{align}
Hence when deriving the weak formulation of the half-space, the boundary term at $x=0$ becomes
\begin{align*}
   \viint{\mu \Vecf}{\Vecphi}_v
= 2 \viint{\mu \Vecphi^+}{\Vecf^-}_{\mu > 0}
= 2\viint{\mu \Vecphi^+}{(\I + \bar\CalK)^{-1} \Vech}_{\mu > 0}
   - 2\viint{\mu \Vecphi^+}{(\I + \bar\CalK)^{-1} (\I - \bar\CalK) \Vecf^+}_{\mu > 0} \,.
\end{align*}

Define the bilinear form 
\begin{equation}
\begin{gathered} \label{def:B}
    \CalB(\Vecf, \Vecphi \,) 
  = - \viint{\Vecf^-}{\mu \del_x \Vecphi^+}_{x,v}
       + \viint{\Vecphi^-}{\mu \del_x \Vecf^+}_{x, v} 
       + \viint{\Vecphi}{\CalL_d \Vecf}_{x, v}
\\
       + 2\viint{\mu \Vecphi^+}{(\I + \bar\CalK)^{-1} (\I - \bar\CalK) \Vecf^+}_{\mu>0} \Big\vert_{x=0} \,.
\end{gathered}
\end{equation}
and let $l$ be the linear functional on $(L^2(\mu \One_{\mu > 0} \dsigma))^m$ such that
\begin{align} \label{def:l}
    l(\Vecphi) = 2\viint{\mu \Vecphi^+}{(\I + \bar\CalK)^{-1} \Vech}_{\mu>0} \,.
\end{align}
The previous calculations then show that the weak formulation of
equation~\eqref{eq:damped} has the form
\begin{align} \label{eq:weak-form-transport}
     \CalB(\Vecf, \Vecphi) = l(\Vecphi)  \, \qquad \text{for any $\Vecphi \in \Gamma$} \,.
\end{align}

The main tool that we use to show well-posedness and quasi-optimality is the Babu\v{s}ka-Aziz lemma which we recall below:
\begin{thm}[Babu\v{s}ka-Aziz]\label{thm-BA}
Suppose $\Gamma$ is a Hilbert space and $\CalB: \Gamma \times \Gamma \to \R$ is a bilinear operator on $\Gamma$.  Let $l: \Gamma \to \R$ be a bounded linear functional on $\Gamma$. 
\smallskip

\noindent (a) If $\CalB$ satisfies the boundedness and inf-sup conditions on $\Gamma$ such that
\begin{itemize}
\item  there exists a constant $c_0 > 0$ such that $|\CalB (f, g)| \leq c_0 \|f\|_\Gamma \|g\|_\Gamma$ for all $f, g \in \Gamma$;  \smallskip

\item there exists a constant $\delta_0 > 0$ such that 
\begin{equation}
    \label{cond:inf-sup}
\begin{aligned}
     \sup_{\|\Vf\|_{\Gamma}=1} \CalB(\Vf, \psi) \geq \delta_0 \|\psi\|_\Gamma \,, \qquad &\text{for any $\psi \in \Gamma$} \,, 
\\
     \sup_{\|\psi\|_{\Gamma}=1} \CalB(\Vf, \psi) \geq \delta_0 \|\Vf\|_\Gamma \,,  \qquad &\text{for any $f \in \Gamma$}
\end{aligned}       
\end{equation}
for some constant $\delta_0 > 0$,
\end{itemize}
then there exists a unique $f \in \Gamma$ which satisfies 
\begin{equation*}
     \CalB (f, \psi) = l(\psi) \,,
\qquad \text{for any $\psi \in \Gamma$} \,.      
\end{equation*}

\noindent (b) Suppose $\Gamma_N$ is a finite-dimensional subspace of $\Gamma$. If in addition $\CalB: \Gamma_N \times \Gamma_N \to \R$ satisfies the inf-sup condition on $\Gamma_N$, then there exists a unique solution $f_N$ such that
\begin{equation*}
     \CalB (f_N, \psi_N) = l(\psi_N) \,,
\qquad \text{for any $\psi_N \in \Gamma_N$} \,. 
\end{equation*} 
Moreover, $f_N$ gives a quasi-optimal approximation to the solution $f$ in (a), that is, there exists a constant $\kappa_1$ such that
\begin{equation*}
    \|f - f_N\|_\Gamma \leq \kappa_1 \inf_{w \in \Gamma_N} \|f - w\|_{\Gamma} \,. 
\end{equation*}
\end{thm}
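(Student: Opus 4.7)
My plan is to reduce the abstract variational problem to an operator equation on $\Gamma$ via Riesz representation, then read off existence, uniqueness, and the Galerkin estimate from the two inf-sup conditions.

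For part (a), I would first invoke the Riesz representation theorem to define a bounded linear operator $T : \Gamma \to \Gamma$ by $\viint{Tf}{\psi}_\Gamma = \CalB(f, \psi)$ for all $f, \psi \in \Gamma$; the boundedness assumption $|\CalB(f,\psi)| \leq c_0 \|f\|_\Gamma \|\psi\|_\Gamma$ gives $\|T\| \leq c_0$. The second inf-sup condition translates into the lower bound $\|Tf\|_\Gamma \geq \delta_0 \|f\|_\Gamma$, which shows $T$ is injective and has closed range. The first inf-sup condition, applied in its symmetric form, similarly yields $\|T^\ast \psi\|_\Gamma \geq \delta_0 \|\psi\|_\Gamma$, so $T^\ast$ is injective; hence $\text{Range}(T) = \text{Ker}(T^\ast)^\perp = \Gamma$. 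Thus $T$ is a bijective isomorphism. Representing $l$ by Riesz as $l(\psi) = \viint{g}{\psi}_\Gamma$ for a unique $g \in \Gamma$, the unique solution is $f = T^{-1} g$. The main conceptual step here is the passage from the two inf-sup inequalities to surjectivity of $T$, i.e., combining the closed range theorem with injectivity of $T^\ast$; this is the crux of the Babu\v{s}ka-Aziz argument and distinguishes it from the symmetric Lax-Milgram setting.

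For part (b), existence and uniqueness of $f_N \in \Gamma_N$ follow from part (a) applied on the finite-dimensional Hilbert space $\Gamma_N$ with the restricted bilinear form (the two inf-sup conditions on a finite-dimensional space are equivalent, since they amount to invertibility of a square matrix). For quasi-optimality, I would use the standard Céa-type argument. Fix any $w \in \Gamma_N$. By Galerkin orthogonality $\CalB(f - f_N, \psi_N) = 0$ for every $\psi_N \in \Gamma_N$, which follows from testing both the continuous and discrete equations against $\psi_N \in \Gamma_N \subseteq \Gamma$ and subtracting. Then
\begin{equation*}
  \delta_0 \|w - f_N\|_\Gamma \leq \sup_{\|\psi_N\|_\Gamma = 1} \CalB(w - f_N, \psi_N) = \sup_{\|\psi_N\|_\Gamma = 1} \CalB(w - f, \psi_N) \leq c_0 \|w - f\|_\Gamma,
\end{equation*}
using the discrete inf-sup in the first inequality and boundedness of $\CalB$ in the last. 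The triangle inequality now gives
\begin{equation*}
  \|f - f_N\|_\Gamma \leq \|f - w\|_\Gamma + \|w - f_N\|_\Gamma \leq \vpran{1 + \tfrac{c_0}{\delta_0}} \|f - w\|_\Gamma,
\end{equation*}
and taking the infimum over $w \in \Gamma_N$ yields the result with $\kappa_1 = 1 + c_0/\delta_0$.

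Overall, the only genuinely subtle step is the surjectivity argument in part (a); the rest is bookkeeping with Riesz representation and a triangle inequality. I would present it in essentially the order above, since part (b) both uses part (a) on $\Gamma_N$ and relies on the same boundedness constant $c_0$ that appears in the quasi-optimality constant.
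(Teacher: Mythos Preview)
Your proof is correct and is precisely the standard argument for the Babu\v{s}ka--Aziz lemma. Note, however, that the paper does not actually prove this theorem: it is merely recalled as a known tool (introduced with ``The main tool that we use \ldots is the Babu\v{s}ka--Aziz lemma which we recall below''), so there is no proof in the paper to compare against. Your Riesz-representation reduction, closed-range/adjoint-injectivity argument for surjectivity in part~(a), and the Galerkin-orthogonality plus discrete inf-sup derivation of the C\'ea-type bound in part~(b) with $\kappa_1 = 1 + c_0/\delta_0$ are exactly the textbook proof.
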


Now we verify that $B(\cdot, \cdot)$ and $l(\cdot)$ defined in \eqref{def:B} and \eqref{def:l} satisfy the conditions in Theorem \ref{thm-BA}.
\begin{prop} 
Suppose the measure $\dsigma$ in the velocity space is symmetric with respect to $\mu$. Suppose the linear operators $\CalL$ satisfies the assumptions (\PL1)-(\PL5) and 
the boundary operator $\CalK$ satisfies assumption (\PK). 
Then 
\begin{itemize}
\item[(a)] the bilinear form $\CalB: \Gamma \times \Gamma \to \R$ satisfies the boundedness and inf-sup conditions and the linear functional $l$ is bounded on $\Gamma$.  Therefore, equation~\eqref{eq:weak-form-transport} has a unique solution $\Vecf \in \Gamma$.  \smallskip

\item[(b)] Moreover, $\mu \del_x \Vecf \in (L^2(\frac{1}{a}\dsigma\dx))^m$.
Thus $\Vecf$ is a strong solution to the damped half-space equation~\eqref{eq:damped}.

\end{itemize}
\end{prop}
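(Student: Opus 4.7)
The plan is to apply the Babu\v{s}ka--Aziz lemma (Theorem~\ref{thm-BA}(a)) to $\CalB$ and $l$ on the Hilbert space $\Gamma$ equipped with the norm~\eqref{def:norm-Gamma}. Boundedness of both $\CalB$ and $l$ is routine once a suitable trace inequality for elements of $\Gamma$ is in hand; the heart of the argument is the inf--sup condition, which will combine the full coercivity $(\PL5)$ of $\CalL_d$, the boundary positivity from Lemma~\ref{lem:bar-K-invertible}(b), and a lift construction needed to recover the derivative piece $\|\mu\del_x\Vecf^+\|_{(L^2(\dsigma_1\dx))^m}$ of the $\Gamma$-norm. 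Part~(b) will follow by choosing parity-restricted test functions supported away from $x=0$.

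For boundedness of $\CalB$, the two transport pairings $\viint{\Vecf^-}{\mu\del_x\Vecphi^+}_{x,v}$ and $\viint{\Vecphi^-}{\mu\del_x\Vecf^+}_{x,v}$ are bounded by Cauchy--Schwarz pairing $(L^2(\dsigma\dx))^m$ against $(L^2(\dsigma_1\dx))^m$ in the natural duality between the two weights; $\viint{\Vecphi}{\CalL_d\Vecf}_{x,v}$ is bounded using $(\PL2)$, noting that the rank-one damping additions in~\eqref{def:L-d-1} are bounded on $(L^2(\dsigma))^m$ by $(\PL3)$; and the boundary pairing is controlled by $\|(\I+\bar\CalK)^{-1}(\I-\bar\CalK)\|\le (1+\alpha_r)/(1-\alpha_r)$ from Lemma~\ref{lem:bar-K-invertible}(a) together with the Egger--Schlottbom-type trace inequality
\begin{equation*}
    \viint{\mu \Vecf^+(0,\cdot)}{\Vecf^+(0,\cdot)}_{\mu>0} \leq C\,\|\Vecf\|_\Gamma^2,
\end{equation*}
which follows by integrating $\tfrac{\rd}{\rd x}\int_\VV \mu|\Vecf^+|^2\dsigma$ and applying Cauchy--Schwarz in the $(\dsigma,\dsigma_1)$-duality. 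Boundedness of $l$ follows from the same trace inequality together with Lemma~\ref{lem:bar-K-invertible}(a) and the hypothesis $\Vech\in (L^2(\mu\One_{\mu>0}\dsigma))^m$.

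For the inf--sup condition, I first test with $\Vecphi=\Vecf$; the two transport terms cancel by antisymmetry and I obtain
\begin{equation*}
    \CalB(\Vecf,\Vecf) = \viint{\Vecf}{\CalL_d\Vecf}_{x,v} + 2\viint{\mu\Vecf^+}{(\I+\bar\CalK)^{-1}(\I-\bar\CalK)\Vecf^+}_{\mu>0}\Big|_{x=0},
\end{equation*}
which by $(\PL5)$ and Lemma~\ref{lem:bar-K-invertible}(b) is bounded below by $\sigma_0\|\Vecf\|^2_{(L^2(a\dsigma\dx))^m}+2\beta_1\viint{\mu\Vecf^+}{\Vecf^+}_{\mu>0}|_{x=0}$. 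This controls the $L^2$-piece of the $\Gamma$-norm but not the derivative piece. To extract the latter I construct an auxiliary $\Vecg\in\Gamma$ with $\Vecg^+\equiv 0$ and $\Vecg^-$ a weighted copy of $\mu\del_x\Vecf^+$, the weight chosen so that $\viint{\Vecg^-}{\mu\del_x\Vecf^+}_{x,v}$ reproduces $\|\mu\del_x\Vecf^+\|^2_{(L^2(\dsigma_1\dx))^m}$ while keeping $\|\Vecg\|_{(L^2(\dsigma\dx))^m}\le C\|\mu\del_x\Vecf^+\|_{(L^2(\dsigma_1\dx))^m}$. Because $\Vecg^+\equiv 0$, the boundary term and the $\mu\del_x\Vecg^+$ term of $\CalB(\Vecf,\Vecg)$ vanish, and the remaining cross term $\viint{\Vecg}{\CalL_d\Vecf}_{x,v}$ is absorbed by Young's inequality via $(\PL2)$. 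Testing with $\Vecphi=\Vecf+\lambda\Vecg$ for $\lambda$ sufficiently small then yields $\CalB(\Vecf,\Vecphi)\ge c\|\Vecf\|_\Gamma^2$ and $\|\Vecphi\|_\Gamma\le C\|\Vecf\|_\Gamma$, proving the second inf--sup in~\eqref{cond:inf-sup}; the first inf--sup follows by the symmetric lift on the $\Vecphi$-side with the sign flipped to match the opposite signs of the two transport terms in $\CalB$. Babu\v{s}ka--Aziz then delivers the unique $\Vecf\in\Gamma$ solving~\eqref{eq:weak-form-transport}.

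For part~(b), restricting $\Vecphi$ to smooth functions compactly supported in $(0,\infty)$ and of a prescribed parity in $\mu$, the weak form~\eqref{eq:weak-form-transport} recovers in the sense of distributions the even/odd split of the damped equation: $\mu\del_x\Vecf^+=-(\CalL_d\Vecf)^-$ (already encoded in $\Vecf\in\Gamma$) and $\mu\del_x\Vecf^-=-(\CalL_d\Vecf)^+$. The latter combined with $(\PL2)$ places $\mu\del_x\Vecf^-$, and hence $\mu\del_x\Vecf$, in $(L^2(\tfrac{1}{a}\dsigma\dx))^m$. Finally, the boundary condition in~\eqref{eq:damped} is recovered by reinserting the identity~\eqref{eq:f-neg-bdry} into the boundary contribution of the full weak form at $x=0$ and matching against arbitrary $\Vecphi^+|_{\mu>0}$. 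The main obstacle throughout is the inf--sup step: specifically the construction of the lift $\Vecg$ with weights compatible with the $(\dsigma,\dsigma_1)$-duality, and ensuring that the Young absorption of $\viint{\Vecg}{\CalL_d\Vecf}$ does not consume the coercivity constants coming from $(\PL5)$ and Lemma~\ref{lem:bar-K-invertible}(b).
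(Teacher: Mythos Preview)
Your proposal is correct and follows essentially the same approach as the paper: test with $\Vecphi=\Vecf$ to extract the $L^2$-piece via $(\PL5)$ and Lemma~\ref{lem:bar-K-invertible}(b), then add a purely odd lift built from $\mu\del_x\Vecf^+$ to recover the derivative piece. The paper makes the specific weight choice $\Vecphi_2=\tfrac{1}{a(v)}\mu\del_x\Vecf^+$ (which is automatically odd, so $\Vecphi_2^+=0$) and writes the combination as $\delta_1\Vecf+\Vecphi_2$ with $\delta_1$ large rather than your $\Vecf+\lambda\Vecg$ with $\lambda$ small, but these are the same construction up to scaling.
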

\begin{proof}
For each $\Vecf \in \Gamma$ the proof is done by finding an appropriate test function $\Vecphi_f \in \Gamma$ such that $\CalB(\Vecphi, \Vecf)$ satisfies the inf-sup condition: 
\begin{align*}
    \CalB(\Vecphi_f, \Vecf) \geq \hat c_0 \norm{\Vecf \,}_\Gamma^2 \,,
\qquad
   \norm{\Vecphi_f}_\Gamma \leq \hat c_1 \norm{\Vecf \,}_\Gamma \,.
\end{align*}
The particular choice of $\Vecphi_f$ is the same as in \cite{LiLuSun} such that
$\Vecphi_f = \delta_1 \Vecphi_1 + \Vecphi_2$ with $\delta_1 > 0$ large enough and
\begin{align*}
   \Vecphi_1 = \Vecf\,,
\qquad
   \Vecphi_2 = \frac{1}{(1 + |v|)^{\kappa_0}} \mu \del_x \Vecf^+  \,.
\end{align*}
Using such $\Vecphi_f$ together with the coercivity of the damped
operator $\CalL_d$ in (\PL5), we have identical estimates for the
interior terms in $\CalB(\Vecphi_f, \Vecf)$ as in the proof of
\cite{LiLuSun}*{Proposition
  3.1}. 
Moreover, the positivity of the boundary term $\viint{\mu \Vecf^+}{(\I
  + \bar\CalK)^{-1} (\I - \bar\CalK) \Vecf^+}_{\mu>0}$ is guaranteed
by Lemma~\ref{lem:bar-K-invertible}.
Hence by the same argument as in \cite{LiLuSun}, we have that $\CalB$ satisfies the inf-sup condition. Boundedness of $\CalB$ and $l$ can be shown by direct applications of the Cauchy-Schwarz inequality. Thus the weak formulation~\eqref{eq:weak-form-transport}  has a unique solution. This also implies that the half-space equation~\eqref{eq:kinetic} has a unique solution in the distributional sense. In addition, the half-space equation itself shows $\mu \del_x \Vecf = -\CalL \Vecf \in (L^2(\frac{1}{a}\dsigma\dx))^m$ where $a$ is the attenuation coefficient defined in \eqref{def-a}. Hence the full trace of $\Vecf$ in $L^2(|\mu|\dsigma)$ is well-defined. 
\end{proof}

As in \cite{LiLuSun} we will solve the damped equation \eqref{eq:damped} by a Galerkin
method.

\begin{prop}[Approximations in $\R^d$]\label{prop:approximation-NK}
Suppose $\{\psi_k^{(1)}(\mu)\psi_{n_2}^{(2)}(v_2) \cdots \psi_{n_d}^{(d)}(v_d) \}_{k, n_2, \cdots n_d=1}^\infty$ is an orthonormal basis of $L^2(\!\dsigma)$ such that
\begin{itemize}
\item $\psi_{2n-1}^{(1)}(\mu)$ is odd and $\psi_{2n}^{(1)}(\mu)$ is even in $\mu$ for any $n \geq 1$;

\item $\mu \psi_{2n}^{(1)}(\mu) \in \Span\{\psi_1^{(1)}, \cdots, \psi_{2n+1}^{(1)}\}$ for each $n \geq 1$.
\end{itemize}
Define the closed subspace $\Gamma_{NK}$ as
\begin{equation*}
   \Gamma_{NK} 
   = \left\{\Vecg(x, v) \in \Gamma \Big| \, \Vecg(x, v) = \sum_{i=1}^m \sum_{k=1}^{2N+1}\sum_{n_2,\cdots,n_d =1}^K  g_{k,n_2,\cdots,n_d}^{(i)}(x) \psi_k^{(1)}
(\mu)\psi_{n_2}^{(2)}(v_2) \cdots \psi_{n_d}^{(d)}(v_d) \, \Be_i \right\}
   \,,
\end{equation*} 
where $\Be_i = (0, \cdots, 0, 1, 0, \cdots, 0)^T$ is the standard $i^{th}$ basis vector with $1 \leq i \leq m$ and $g_{k,n_2 \cdots n_d}^{(i)} \in  H^1(\R^+)$. 
Then 
\begin{itemize}
\item[(a)]
there exists a unique $\Vecf_{NK} \in \Gamma_{NK}$ such that  
\begin{equation}\label{def:f-NK}
     \Vecf_{NK}(x, v) 
     =  \sum_{i=1}^m \sum_{k=1}^{2N+1} \sum_{n_2, \cdots, n_d =1}^K  a_{k,n_2,\cdots,n_d}^{(i)}(x) \psi_k^{(1)}(\mu)\psi_{n_2}^{(2)}(v_2) \cdots \psi_{n_d}^{(d)}(v_d) \, \Be_i \,, 
\end{equation}
which satisfies 
\begin{equation}\label{eq:variational-finitedim-NK}
      \CalB(\Vecf_{NK}, \Vecg) = l(\Vecg) \quad \text{for every $\Vecg \in \Gamma_{NK}$} \,, 
\end{equation}
where $\CalB$ and $l$ are defined in \eqref{def:B} and \eqref{def:l} respectively. The coefficients $\{a_{k,n_2, \cdots,n_d}^{(i)}(x)\}$ satisfy that
\begin{equation*}
     a_{k,n_2,\cdots,n_d}^{(i)}(\cdot) \in C^1[0, \infty) \cap H^1(0, \infty), \qquad 
1 \leq k \leq 2N+1 \,,  \,\, 1 \leq n_2,\cdots,n_d \leq K \,,
\,\, 1 \leq i \leq m \,. 
\end{equation*}

\item[(b)] The approximation is quasi-optimal, that is, there exists a constant $\kappa_2 > 0$ such that 
\begin{equation*}
    \|\Vecf - \Vecf_{NK}\|_\Gamma 
\leq 
    \kappa_2 \inf_{\vec w \in \Gamma_{NK}} \|\Vecf - \vec w\|_{\Gamma} \,. 
\end{equation*}
\end{itemize}
\end{prop}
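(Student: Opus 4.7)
The strategy is to apply part (b) of Babu\v{s}ka--Aziz (Theorem~\ref{thm-BA}) on the closed subspace $\Gamma_{NK} \subset \Gamma$. Boundedness of $\CalB$ on $\Gamma_{NK} \times \Gamma_{NK}$ and of $l$ on $\Gamma_{NK}$ are inherited directly from the continuous proposition, so the only substantive task for part~(a) is to verify the discrete inf-sup condition on $\Gamma_{NK}$; part~(b) of the theorem will then automatically deliver the quasi-optimality claim in~(b).

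For the inf-sup step my plan is to reuse the continuous test function construction of \cite{LiLuSun}*{Proposition 3.1}: given $\Vecf \in \Gamma_{NK}$, take
\begin{equation*}
  \Vecphi_f = \delta_1 \Vecf + \frac{1}{(1+|v|)^{\kappa_0}}\, \mu \del_x \Vecf^+
\end{equation*}
with $\delta_1 > 0$ sufficiently large. The two structural assumptions on $\{\psi_k^{(1)}\}$ are precisely what is needed to keep $\Vecphi_f$ inside $\Gamma_{NK}$: the parity hypothesis forces $\Vecf^+$ to be supported on the even-indexed basis $\psi_2^{(1)},\ldots,\psi_{2N}^{(1)}$, and the condition $\mu \psi_{2n}^{(1)} \in \Span\{\psi_1^{(1)},\ldots,\psi_{2n+1}^{(1)}\}$ keeps $\mu \del_x \Vecf^+$ inside $\Span\{\psi_1^{(1)},\ldots,\psi_{2N+1}^{(1)}\}$, exactly the $\mu$-component of $\Gamma_{NK}$. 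Once membership is secured, the interior estimates for $\CalB(\Vecphi_f, \Vecf)$ are a verbatim transcription of \cite{LiLuSun}*{Proposition 3.1} using the full coercivity (\PL5) of $\CalL_d$, while the only new piece, the boundary contribution $2\viint{\mu \Vecf^+}{(\I + \bar\CalK)^{-1}(\I - \bar\CalK) \Vecf^+}_{\mu > 0}$, is bounded below by \eqref{posit-bdry} in Lemma~\ref{lem:bar-K-invertible}. Combined with the easy bound $\|\Vecphi_f\|_\Gamma \leq C \|\Vecf\|_\Gamma$, this yields the inf-sup constant, and Theorem~\ref{thm-BA}(b) produces the unique $\Vecf_{NK}$ satisfying \eqref{eq:variational-finitedim-NK} together with the quasi-optimality bound of part~(b).

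For the regularity of the coefficients $a_{k,n_2,\ldots,n_d}^{(i)}$, I would substitute the ansatz \eqref{def:f-NK} into \eqref{eq:variational-finitedim-NK} and test against $\Vecg(x,v) = \phi(x) \psi_k^{(1)}\psi_{n_2}^{(2)}\cdots\psi_{n_d}^{(d)}\Be_i$ with $\phi \in C_c^\infty(0,\infty)$. Integration by parts in $x$ converts the weak formulation into a linear constant-coefficient first-order ODE system on $(0,\infty)$ for the coefficient vector. Since the coefficients already sit in $L^2(0,\infty)$ by $\Gamma_{NK}$-membership, the ODE promotes them to $H^1(0,\infty) \cap C^1[0,\infty)$ in a standard way. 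The main obstacle is the basis-compatibility check in the first step: one must verify that the truncation to index $2N+1$ (rather than $2N$) and the careful $\mu$-multiplication property are exactly what is required so that $\mu\del_x\Vecf^+$ does not leak outside the retained basis. Once that compatibility is in place, the remainder of the argument is essentially a direct transcription of the continuous proof, with the new reflective boundary contribution absorbed by Lemma~\ref{lem:bar-K-invertible}.
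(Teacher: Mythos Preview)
Your overall strategy matches the paper's exactly: verify the discrete inf--sup condition on $\Gamma_{NK}$ with the same test function construction and then invoke Babu\v{s}ka--Aziz, with Lemma~\ref{lem:bar-K-invertible} handling the new reflective boundary term. The ODE argument for the $C^1 \cap H^1$ regularity of the coefficients is also what the paper intends.

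There is, however, a genuine gap in your membership check. You argue that the parity and $\mu$--multiplication hypotheses force $\mu\,\del_x\Vecf^+ \in \Gamma_{NK}$, and that is correct. But the test function is not $\mu\,\del_x\Vecf^+$; it is
\[
  \Vecphi_2 \;=\; \frac{1}{(1+|v|)^{\kappa_0}}\,\mu\,\del_x\Vecf^+,
\]
and the weight $(1+|v|)^{-\kappa_0}$ depends on \emph{all} velocity components. Multiplication by this weight will, for $\kappa_0>0$, take $\Vecphi_2$ out of the finite tensor span in both the $\mu$ and the $v_2,\ldots,v_d$ directions, so $\Vecphi_2 \notin \Gamma_{NK}$ in general. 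The structural hypotheses on $\{\psi_k^{(1)}\}$ say nothing about closure under this multiplication.

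The fix is exactly what the paper does: replace $\Vecphi_2$ by its $L^2(\dsigma)$--projection onto (the velocity part of) $\Gamma_{NK}$. The key observation that makes the estimates survive projection is the one you already isolated, namely that $\mu\,\del_x\Vecf^+$ itself lies in $\Gamma_{NK}$; hence in the crucial pairing
\[
  \viint{(P_{NK}\Vecphi_2)^-}{\mu\,\del_x\Vecf^+}_{x,v}
  \;=\;
  \viint{\Vecphi_2^-}{\mu\,\del_x\Vecf^+}_{x,v}
  \;=\;
  \norm{\mu\,\del_x\Vecf^+}_{(L^2(a^{-1}\dsigma\,\rd x))^m}^2
\]
the projection is transparent by self--adjointness, and the remaining terms in $\CalB(P_{NK}\Vecphi_2,\Vecf)$ are controlled exactly as in the continuous case since $\norm{P_{NK}\Vecphi_2}_\Gamma \le \norm{\Vecphi_2}_\Gamma$. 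With this correction your argument goes through and coincides with the paper's.
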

\begin{proof}
Part (a) and (b) follow directly from the Babu\v{s}ka-Aziz lemma as long as we verify that $\CalB(\cdot, \cdot)$ satisfies the inf-sup condition over $\Gamma_{NK}$. The only modification is in the choice of $\Vecphi_f$ where $\Vecphi_2$ is projected onto $\Gamma_{NK}$. The proof again follows along the same line to the proof of Proposition 3.2 in \cite{LiLuSun} using the positivity of the boundary term guaranteed by Lemma~\ref{lem:bar-K-invertible}.
\end{proof}

The following Proposition reformulates \eqref{eq:variational-finitedim-NK} into an ODE with explicit boundary conditions. 
\begin{prop}
Let 
\begin{equation*}
      \mathsf{A} = \left(\La \mu \psi_k^{(1)}, \,\, \psi_j^{(1)}\Ra_{\mu} \right)_{(2N+1)\times (2N+1)} \,.
\end{equation*}
Define the 2(d+1)-tensors $\mathfrak{A}$ 
and $\mathfrak{B}$ as
\begin{equation}\label{def:D-B}
\begin{aligned}
      \mathfrak{A} &= \mathsf{A} \otimes \ii \otimes \cdots \otimes \ii \otimes \ii = \big(\mathsf{A}_{ik} \delta_{n_2 l_2} \cdots \delta_{n_d l_d} \delta_{pq}\big)_{(2N+1)^2 \times K^2 \times \cdots \times K^2 \times m^2}\,,
\\
      \mathfrak{B}_{kn_2\cdots n_dp}^{il_2\cdots l_d q}
      &= - \La \psi_k^{(1)}(\mu)\psi_{n_2}^{(2)}(v_2) \cdots \psi_{n_d}^{(d)}(v_d) \, \Be_p,  \,\,
         \CalL_d \left(\psi_i^{(1)}(\mu)\psi_{l_2}^{(2)}(v_2) \cdots \psi_{l_d}^{(d)}(v_d) \, \Be_q \right)\Ra_v 
\end{aligned}
\end{equation}
for $1 \leq i,k \leq 2N+1$, $1 \leq n_2, \cdots, n_d \leq K$, $1 \leq l_2, \cdots, l_d \leq K$, and $1 \leq p, q \leq m$.
Then the variational form \eqref{eq:variational-finitedim-NK} is equivalent to the  following ODE for the coefficients $a_{kn_2\cdots n_d}^{(p)}(x)$:
\begin{equation} \label{ODE-general}
        \sum_{p=1}^m \sum_{k=1}^{2N+1} \sum_{n_2, \cdots, n_d=1}^K \mathfrak{A}_{kn_2 \cdots n_d p}^{il_2 \cdots l_dq} \, \del_x a_{kn_2 \cdots n_d}^{(p)}(x) 
        = \sum_{p=1}^m\sum_{k=1}^{2N+1}  \sum_{n_2, \cdots, n_d=1}^K \mathfrak{B}_{kn_2 \cdot n_d p}^{il_2 \cdots l_d q} \, a_{kn_2 \cdots n_d}^{(p)}(x),
\end{equation}
together with the boundary conditions at $x=0$:
\begin{equation}\label{Boundary-general}
\begin{aligned}
    \sum_{k=1}^{N+1} \La \mu \psi_{2k-1}^{(1)}, \,\, \psi_{2i}^{(1)} \Ra_{\mu}  a_{2k-1,n_2, \cdots, n_d}^{(q)}(0)
    + 2 \La \mu \vec\psi_{2i,n_2,\cdots,n_d}^{(q)}, \,\, (\I + \bar\CalK)^{-1}(\I - \bar\CalK)\Vecf_{NK}\Ra_{\mu>0}
\\
  = 2 \int_{\mu>0} \mu \, (\I + \bar\CalK)^{-1} \Vech \cdot \, \vec\psi_{2i,n_2,\cdots,n_d}^{(q)}\dsigma
\end{aligned}
\end{equation}
for $i = 1, \cdots, N$, $n_2, \cdots, n_d = 1, 2, \cdots, K$, and $q = 1, \cdots, d$.
Here $\Vecf_{NK}$ is defined in~\eqref{def:f-NK} at $x = 0$ and the basis function $\vec\psi_{2i,n_2,\cdots,n_d}^{(q)}$ is
\begin{align*}
   \vec\psi_{2i,n_2,\cdots,n_d}^{(q)} = \psi_{2i}^{(1)}(\mu)\psi_{n_1}^{(2)}(v_2) \cdots \psi_{n_d}^{(d)}(v_d) \, \Be_q  \,.
\end{align*}
\end{prop}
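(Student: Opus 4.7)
The plan is to derive both \eqref{ODE-general} and \eqref{Boundary-general} by substituting the ansatz \eqref{def:f-NK} into \eqref{eq:variational-finitedim-NK} and testing against separable elements $\Vecg(x, v) = \phi(x) \vec\psi_{i, n_2, \cdots, n_d}^{(q)}(v)$ with $\phi \in H^1(\R^+)$. The derivation rests on two simple parity observations: because $\psi_{2k-1}^{(1)}$ is odd and $\psi_{2k}^{(1)}$ is even in $\mu$, the decomposition $\Vecf_{NK} = \Vecf_{NK}^+ + \Vecf_{NK}^-$ separates the ansatz precisely into its even-$k$ and odd-$k$ components; and in the bilinear form $\CalB$ of \eqref{def:B}, the two derivative pieces and the boundary piece have the structure that $\Vecg^+$ pairs only with $\Vecf_{NK}^-$ (and with the incoming data through $\bar\CalK$), while $\Vecg^-$ pairs only with $\Vecf_{NK}^+$.

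To derive the interior ODE I would take $\phi \in C_c^\infty((0,\infty))$ so that all $x=0$ contributions vanish. An integration by parts in $x$, permitted because $a_{k,n_2,\cdots,n_d}^{(p)} \in H^1(\R^+)$, rewrites $-\viint{\Vecf_{NK}^-}{\mu \del_x \Vecg^+}_{x,v}$ as $\viint{\mu \del_x \Vecf_{NK}^-}{\Vecg^+}_{x,v}$; combined with $\viint{\Vecg^-}{\mu \del_x \Vecf_{NK}^+}_{x,v}$ and the $\CalL_d$ term, and using self-adjointness of $\CalL_d$ (which follows from (\PL1) together with the symmetric form of the damping in \eqref{def:L-d-1}), this yields the pointwise Galerkin identity
\begin{equation*}
   \viint{\mu \del_x \Vecf_{NK}}{\vec\psi_{i, n_2, \cdots, n_d}^{(q)}}_v + \viint{\CalL_d \Vecf_{NK}}{\vec\psi_{i, n_2, \cdots, n_d}^{(q)}}_v = 0 \qquad \text{for a.e. } x > 0 \,,
\end{equation*}
valid for each $i \in \{1, \ldots, 2N+1\}$. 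Expanding both inner products in the coefficients $a_{k,n_2,\cdots,n_d}^{(p)}$ and using the orthonormality of the tensor-product basis reproduces \eqref{ODE-general} with $\mathfrak{A}$ and $\mathfrak{B}$ exactly as in \eqref{def:D-B}.

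For the boundary conditions I would instead take $\phi \in H^1(\R^+)$ with $\phi(0) \neq 0$ and test against $\Vecg = \phi(x) \vec\psi_{2i, n_2, \cdots, n_d}^{(q)}$ for $i = 1, \ldots, N$; the odd-index tests give no new information since their $\Vecg^+|_{x=0}$ and $l(\Vecg)$ both vanish, which is why \eqref{Boundary-general} contains only $N$, not $2N+1$, equations. After the integration by parts of the previous step, the interior integral is annihilated by the ODE just established, leaving
\begin{equation*}
  \viint{\Vecf_{NK}^-(0, \cdot)}{\mu \vec\psi_{2i, n_2, \cdots, n_d}^{(q)}}_v + 2 \viint{\mu \vec\psi_{2i, n_2, \cdots, n_d}^{(q)}}{(\I+\bar\CalK)^{-1}(\I-\bar\CalK)\Vecf_{NK}^+(0, \cdot)}_{\mu>0} = 2 \viint{\mu \vec\psi_{2i, n_2, \cdots, n_d}^{(q)}}{(\I+\bar\CalK)^{-1}\Vech}_{\mu>0} \,.
\end{equation*}
Only the odd-$k$ modes of $\Vecf_{NK}$ survive in the first inner product, and expanding it gives exactly $\sum_{k=1}^{N+1} \viint{\mu \psi_{2k-1}^{(1)}}{\psi_{2i}^{(1)}}_\mu \, a_{2k-1, n_2, \cdots, n_d}^{(q)}(0)$, reproducing \eqref{Boundary-general}.

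The converse implication, that coefficients satisfying \eqref{ODE-general} and \eqref{Boundary-general} yield a $\Vecf_{NK} \in \Gamma_{NK}$ solving \eqref{eq:variational-finitedim-NK}, follows by reversing the derivation on each separable test function and invoking linearity of $\CalB$ and $l$. The main hazard is not analytical but the parity bookkeeping: one must track carefully which of the four pieces of $\CalB$ sees $\Vecg^+$ versus $\Vecg^-$ and which odd-$k$ components of $\Vecf_{NK}$ actually contribute to the integration-by-parts boundary term at $x=0$, so that the combination of $\viint{\mu \psi_{2k-1}^{(1)}}{\psi_{2i}^{(1)}}_\mu$ coefficients in \eqref{Boundary-general} emerges with the correct prefactors and signs.
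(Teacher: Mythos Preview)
Your proposal is correct and follows essentially the same approach as the paper's proof: the paper also tests against separable functions $g_0(x)\,\psi_k^{(1)}\psi_{n_2}^{(2)}\cdots\psi_{n_d}^{(d)}\Be_p$ with $g_0\in C_c^1(0,\infty)$ to obtain \eqref{ODE-general}, and against $g_1(x)\,\psi_{2k}^{(1)}\psi_{n_2}^{(2)}\cdots\psi_{n_d}^{(d)}\Be_p$ with $g_1\in C_c^1[0,\infty)$ to obtain \eqref{Boundary-general}. Your write-up simply makes explicit the parity bookkeeping, the integration by parts, and the converse implication that the paper leaves to the reader.
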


\begin{proof}
Equation~\eqref{ODE-general} is obtained by choosing the test function $\Vecg$ in~\eqref{eq:variational-finitedim-NK} as 
\begin{align*}
   \Vecg 
   = g_0(x) \, \psi_k^{(1)}(\mu)\psi_{n_2}^{(2)}(v_2) \cdots \psi_{n_d}^{(d)}(v_d) \, \Be_p
\end{align*}
for each basis function $\psi_k^{(1)}(\mu)\psi_{n_2}^{(2)}(v_2) \cdots \psi_{n_d}^{(d)}(v_d) \, \Be_p$ and for any arbitrary $g_0(\cdot) \in C_c^1(0, \infty)$.
The boundary condition~\eqref{Boundary-general} is derived by choosing the test functions as 
\begin{align*}
   \Vecg 
   = g_1(x) \, \psi_{2k}^{(1)}(\mu)\psi_{n_2}^{(2)}(v_2) \cdots \psi_{n_d}^{(d)}(v_d) \, \Be_p \,,
\end{align*}
for each basis function $\psi_{2k}^{(1)}(\mu)\psi_{n_2}^{(2)}(v_2) \cdots \psi_{n_d}^{(d)}(v_d) \, \Be_p$ in $\Gamma_{NK}$ and for any arbitrary $g_1(\cdot) \in C_c^1[0, \infty)$.
\end{proof}

Since the tensors $\mathfrak{A}, \mathfrak{B}$ are the same as in \cite{LiLuSun}, we have that there are $mNK^{d-1}$ positive, $mNK^{d-1}$ negative, and $mK^{d-1}$ generalized eigenvalues of $(\mathfrak{A}, \mathfrak{B})$. Note that there are $m(2N+1)K^{d-1}$ unknowns in the ODE system~\eqref{ODE-general} and $mNK^{d-1}$ boundary conditions. This is again the correct number of boundary conditions for~\eqref{ODE-general} to have a unique decaying solution.

\subsection{Recovery} In this part we show the procedures to recover the solution to the original kinetic equation~\eqref{eq:kinetic}. To this end, let 
$\Vecf$ be the solution to the damped equation~\eqref{eq:damped}. For all $1 \leq i \leq \nu_0$ and $1 \leq j \leq \gamma_+$, let $\Vecg_{0,i}, \Vecg_{0,j}$ be the solution to~\eqref{eq:damped} with $\Vech = \VecX_{0, i} - \CalK (\VecX_{0, i} \big|_{\mu<0})$ and $\Vech = \VecX_{+, j}- \CalK (\VecX_{+, j} \big|_{\mu<0})$ respectively. More explicitly, for each $1 \leq i \leq \nu_0$,
\begin{align} \label{eq:g-0-i-damped}
     \mu &\del_x \Vecg_{0, i} + \CalL_d \Vecg_{0, i} = 0 \,, \nn
\\
     \Vecg_{0, i} \big|_{\mu>0}
     =&\, \vpran{\VecX_{0, i} - \CalK (\VecX_{0, i} \big|_{\mu<0})}
        + \CalK \vpran{\Vecg_{0, i} \big|_{\mu<0}} \,, 
\qquad &&\mu > 0 \,,
\\
     &\Vecg_{0, i} \to 0 \,, &&\text{as $x \to \infty$}, \nn
\end{align}
and for each $1 \leq j \leq \nu_+$,
\begin{align} \label{eq:g-plus-j-damped}
     \mu &\del_x \Vecg_{+, j} + \CalL_d \Vecg_{+, j} = 0 \,, \nn
\\
     \Vecg_{+, j} \big|_{\mu>0}
     =&\, \vpran{\VecX_{+, j} - \CalK (\VecX_{+, j} \big|_{\mu<0})} 
        + \CalK \vpran{\Vecg_{+, j} \big|_{\mu<0}} \,, 
\qquad &&\mu > 0 \,,
\\
     &\Vecg_{+, j} \to 0 \,, &&\text{as $x \to \infty$}, \nn
\end{align} 
The key idea is that the damping terms in $\CalL_d$ vanish for a proper linear combination of $\Vecf$, $\Vecg_{0, i}$'s, and $\Vecg_{0, j}$'s.
The recovering procedures rely on the uniqueness of solutions to the original kinetic equation~\eqref{eq:kinetic}.

\begin{prop} \label{prop:g}
There exists a unique sequence of constants $c_{0, i}, c_{+, k} \in \R$ for $1 \leq i \leq \gamma_0$ and $1 \leq j \leq \gamma_+$ such that if we define
\begin{align} \label{def:g}
    \Vecg 
    = \sum_{i=1}^{\gamma_0} c_{0, i} \Vecg_{0, i}
       + \sum_{i=1}^{\gamma_+} c_{+, j} \Vecg_{+, j} \,,
\end{align}
then
\begin{align} \label{eq:shift}
     \viint{\mu \VecX_{0,i_0}}{(\Vecf - \Vecg)(x)}_v = 0 \,,
\quad
     \viint{\mu \VecX_{\pm,i_\pm}}{(\Vecf - \Vecg)(x)}_v = 0 \,,
\quad
     \viint{\mu \CalL^{-1}(\mu \VecX_{0,i_0})}{(\Vecf - \Vecg)(x)}_v
 = 0 
\end{align}
for all $x \geq 0$ and all $1 \leq i_0 \leq \gamma_0$, $1 \leq i_\pm \leq \gamma_\pm$.
\end{prop}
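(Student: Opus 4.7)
My strategy is to convert the infinite family of orthogonality conditions in~\eqref{eq:shift} into a single finite-dimensional linear equation for $c = (c_{0,i}, c_{+,j})$, and then solve that equation using the uniqueness of the original undamped problem (Lemma~\ref{lem:uniqueness}). Setting $F = \Vecf - \Vecg$, I bundle the left-hand sides of \eqref{eq:shift} into a single vector
\[
\Psi(x) = \bigl(\viint{\mu \VecX_{0,i_0}}{F}_v,\ \viint{\mu \VecX_{\tau,i_\tau}}{F}_v,\ \viint{\mu \CalL^{-1}(\mu \VecX_{0,i_0})}{F}_v\bigr) \in \R^{2\nu_0 + \nu_+ + \nu_-}.
\]
Differentiating in $x$ and using $\mu \del_x F = -\CalL_d F$ yields $\del_x \viint{\mu\phi}{F}_v = -\viint{\phi}{\CalL_d F}_v$ for each test $\phi$. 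A direct computation using the self-adjointness of $\CalL$, the spectral relations $\viint{\mu \VecX_{\tau,i}}{\VecX_{\tau',j}}_v = \lambda_{\tau,i}\delta_{\tau\tau'}\delta_{ij}$, and the fact that $\mu\VecX_{0,i} \in (\NullL)^\perp$ shows that every resulting inner product is a linear combination of the components of $\Psi$ itself, with most cross terms vanishing by orthogonality. Hence $\Psi$ satisfies a closed constant-coefficient linear ODE $\del_x \Psi = -B\Psi$.

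Since $\Vecf$ and each of $\Vecg_{0,i}, \Vecg_{+,j}$ are damped solutions of \eqref{eq:damped}, they all decay to $0$ as $x \to \infty$, and so does $\Psi$. By uniqueness of linear ODE solutions, $\Psi \equiv 0$ on $[0,\infty)$ if and only if $\Psi(0) = 0$. Using linearity in $\Vecg$, the infinite collection of conditions in~\eqref{eq:shift} therefore reduces to the single linear equation
\[
\sum_{i=1}^{\nu_0} c_{0,i}\, \Psi[\Vecg_{0,i}](0) + \sum_{j=1}^{\nu_+} c_{+,j}\, \Psi[\Vecg_{+,j}](0) = \Psi[\Vecf](0)
\]
in $\R^{2\nu_0 + \nu_+ + \nu_-}$ with $\nu_0 + \nu_+$ unknowns.

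For uniqueness of $c$, I set $\Vecf = 0$ and suppose some $c$ produces $\Psi[\Vecg](0) = 0$, hence $\Psi[\Vecg]\equiv 0$. Then the damping terms in~\eqref{def:L-d-1} applied to $\Vecg$ vanish identically, so $\Vecg$ in fact solves the undamped equation $\mu\del_x\Vecg + \CalL\Vecg = 0$. Form $F_{\mathrm{rec}} = -\Vecg + \sum_i c_{0,i}\VecX_{0,i} + \sum_j c_{+,j}\VecX_{+,j}$. The specific choices of incoming data in~\eqref{eq:g-0-i-damped}--\eqref{eq:g-plus-j-damped} are tailored so that the boundary contributions cancel, leaving $F_{\mathrm{rec}}$ as a solution of \eqref{eq:kinetic} with $\Vech \equiv 0$ and end state $\sum c_{*,*} \VecX_{*,*} \in H^+ \oplus H^0$. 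Lemma~\ref{lem:uniqueness} then forces $F_{\mathrm{rec}} \equiv 0$, so $\Vecg \equiv \sum c_{*,*}\VecX_{*,*}$; the left-hand side decays while the right-hand side is $x$-independent, so both sides vanish, and linear independence of the $\VecX_{*,*}$'s yields $c = 0$.

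For existence, both $\Psi[\Vecf](0)$ and each $\Psi[\Vecg_{*,*}](0)$ lie in the stable subspace $E_s(-B)$ of the ODE (since the corresponding $\Psi$-trajectories decay), and the previous step shows the linear map $c \mapsto \sum c_{*,*}\Psi[\Vecg_{*,*}](0)$ is an injection $\R^{\nu_0 + \nu_+} \hookrightarrow E_s(-B)$. The main obstacle is to verify the dimension match $\dim E_s(-B) = \nu_0 + \nu_+$, which upgrades injectivity to bijectivity and yields the required $c$. I expect this count from the block structure of $B$: the $b_+$ block contributes $\nu_+$ stable modes of $-B$ (from the eigenvalues $\alpha\lambda_{+,j}>0$ of $B$), the $b_-$ block contributes $\nu_-$ unstable modes (from $\alpha\lambda_{-,j}<0$), and the coupled $(a,c)$ block---governed by the symmetric positive (semi)definite matrix $M_{ij} = \viint{\mu\VecX_{0,i}}{\CalL^{-1}(\mu\VecX_{0,j})}_v$---exhibits a Hamiltonian-type eigenvalue pairing producing exactly $\nu_0$ stable and $\nu_0$ unstable modes. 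Summing gives $\dim E_s(-B) = \nu_+ + \nu_0$, closing the argument.
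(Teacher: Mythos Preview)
Your approach is essentially the same as the paper's: both reduce \eqref{eq:shift} to a constant-coefficient linear ODE for the moment vector $U_f$ (your $\Psi$), and both rely on the spectral structure of the coefficient matrix (your $B$, the paper's $\BA$) to establish existence and uniqueness of the coefficients. Your uniqueness argument---showing that $\Psi[\Vecg]\equiv 0$ makes the damping terms vanish, reconstructing $F_{\mathrm{rec}}$ as an undamped solution with zero data, and then invoking Lemma~\ref{lem:uniqueness}---is a clean, direct variant; the paper achieves the same conclusion by appealing to the rank of $\BA$ (which in \cite{LiLuSun} is itself proved via Lemma~\ref{lem:uniqueness}). For existence, both arguments hinge on the eigenvalue count $\dim E_s(-B)=\nu_0+\nu_+$; the paper defers this entirely to \cite{LiLuSun}*{Proposition~3.8}, while your block heuristic (positive $\alpha\lambda_{+,j}$, negative $\alpha\lambda_{-,j}$, and the $\pm\sqrt{\alpha B}$-type pairing in the $(U_0,U_{\CalL,0})$ block) is correct in spirit but would need a perturbation argument in $\alpha$ to handle the off-diagonal couplings $\alpha A_{21},\alpha A_{22}^T$ rigorously, since the blocks of $\BA$ are not actually decoupled.
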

\begin{proof}
Since the proof of this proposition only depends on the structure of the kinetic equation instead of the particular form of the boundary condition, the details are the same as in Proposition 3.8 in \cite{LiLuSun}. We explain the main idea here. The key structure we utilize here is that the coefficients in the added damping terms only depends on the average of the damped solution against $\mu \VecX_{\alpha, i}$ or $\mu \CalL^{-1}(\mu \VecX_0)$. Hence to remove the damping effect, we only need to choose $\Vecg$ carefully such that $\Vecf - \Vecg$ will have zero averages. Bearing this in mind, we denote
\begin{align*}
  \VU_+ & = \left(\La \mu \VecX_{+,1}, f \Ra, \,\, \cdots \,, \,\,
    \La \mu \VecX_{+,\nu_+}, f \Ra \right)^{\TT} \,, 
\qquad
  \VU_- = \left(\La \mu \VecX_{-,1}, f \Ra, \,\, \cdots \,, \,\,
    \La \mu \VecX_{-,\nu_-}, f \Ra \right)^{\TT} \,, \\
  \VU_0 & = \left(\La \mu \VecX_{0,1}, f \Ra, \,\, \cdots \,, \,\, \La
    \mu \VecX_{0,\nu_0}, f \Ra \right)^{\TT} \,, \\
  \VU_{\VecL, 0} & = \left(\La \mu \VecL^{-1}( \mu \VecX_{0,1}), f
    \Ra_v, \,\, \cdots \,, \,\, \La (v_1 + u) \VecL^{-1}( (v_1 + u
    )X_{0,1})X_{0,\nu_0}, f \Ra_v\right)^{\TT} \,, 
\end{align*}
and
\begin{equation}
  \VU_f 
  = \left(\VU_+^{\TT}, \,\, \VU_-^{\TT}, \,\, \VU_0^{\TT}, \,\, \VU_{\VecL, 0}^{\TT}
  \right)^{\TT} \,. \label{def-U}
\end{equation}
By multiplying $X_{+,j}, X_{-,i}, X_{0, k}, \CalL^{-1}(v_1 \chi_{0,m})$ to \eqref{eq:damped} and integrating over $v \in \R^d$, we have
\begin{equation}
     \del_x \VU + \BA \VU  = 0 \,, \label{ode-U-2}
\end{equation}     
where the coefficient matrix $\BA$ is 
\begin{equation}
    \BA
    =\left(
       \begin{array}{c|c|c}
        \begin{matrix}
              \alpha D_{+} \\       
          && -\alpha D_{-} \\       \end{matrix}        
       & \mbox{0}
        & \begin{matrix}\alpha A_{21} \\  \alpha A_{22}\end{matrix}
\\ \hline
        0 & 0 
     & \alpha B
\\ \hline
       \begin{matrix}\alpha  A_{21}^{\TT} && \alpha A_{22}^{\TT} \end{matrix}& I + \alpha B & \alpha D
\end{array}\right) \,,
\end{equation}
where $D_\pm$ are positive diagonal matrices and
\begin{equation*}
\begin{aligned}
     A_{21, ik} 
     &= \left(\La \mu \VecX_{+,i}, \,\, \VecL^{-1}(\mu \VecX_{0,k})\Ra\right)_{\gamma_+ \times \gamma_0} \,,
\qquad
     A_{22, jk} 
     = \left(\La \mu \VecX_{-,j}, \,\, \VecL^{-1}(\mu  \VecX_{0,k})\Ra\right)_{\gamma_- \times \gamma_0} \,,
\\
     B_{ij}
     & = \La \mu  \VecX_{0,i}, \,\, \VecL^{-1}(\mu \VecX_{0,j})\Ra_{\gamma_0 \times \gamma_0}  \,,
\qquad
    D_{ij}
    =  \La \mu \VecL^{-1}(\mu \VecX_{0,i}), \,\,
                  \VecL^{-1}(\mu \VecX_{0,j})\Ra_{\gamma_0 \times \gamma_0}  \,, 
\end{aligned}
\end{equation*}
where $B$ is symmetric positive definite and $D$ is symmetric.  Thus $\BA$ is a matrix of size $(\gamma_+ + \gamma_- + 2\gamma_0) \times (\gamma_+ + \gamma_- + 2\gamma_0)$. The proof of Proposition 3.8 in \cite{LiLuSun} shows that $\BA$ has $\gamma_- + \gamma_0$ negative eigenvalues $\{{\bf v_i}\}_{i=1}^{\gamma_- + \gamma_0}$. Moreover, $\BA$ is of rank $\gamma_+ + \gamma_0$ since the original kinetic equation~\eqref{eq:kinetic} satisfies the uniqueness property in Lemma~\ref{lem:uniqueness}. Note that by the boundedness, all the solutions to the damped equation~\eqref{eq:damped} will be orthogonal to $\Span\{{\bf v_i}\}_{i=1}^{\gamma_- + \gamma_0}$. Hence for any solution $\Vecf$ to the damped equation, there exists a unique set of $\{c_{0, i}\}_{i=1}^{\gamma_0} \cup \{c_{+, j}\}_{i=1}^{\gamma_+}$
such that for $\Vecg$ defined in~\eqref{def:g} with these coefficients, we have
\begin{align*}
    \BA \VU_g = \BA \VU_f \,,
\end{align*}
which is equivalent to~\eqref{eq:shift}.
 \end{proof}

Now we can construct the solution to the original kinetic equation.
\begin{prop} \label{prop:eta}
   Let $\Vecf$ be the solutions to the damped equation~\eqref{eq:damped} with $\Vech$ and $\Vecg$ the function defined in Proposition~\ref{prop:g}. Let $c_{0, i}$'s and $c_{+, i_+}$'s be the coefficient of $\Vecg$ given in Proposition~\ref{prop:g}. Let
\begin{align} \label{def:eta}
   \Vec \eta 
   = \Vecf - \Vecg 
       + \sum_{i=1}^{\nu_0} c_{0, i} \VecX_{0, i}
       + \sum_{i=1}^{\nu_+} c_{+, j} \VecX_{+, j} \,. 
\end{align}
Then $\Vec\eta$ is the unique solution to the original half-space equation~\eqref{eq:kinetic}. 
\end{prop}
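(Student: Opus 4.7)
The plan is to verify the three defining properties of the original kinetic equation~\eqref{eq:kinetic} for $\Vec\eta$ one at a time, and then invoke Lemma~\ref{lem:uniqueness} for uniqueness. Since $\Vecf$, $\Vecg_{0,i}$, $\Vecg_{+,j}$ all solve the damped equation~\eqref{eq:damped}, by linearity $\Vecf - \Vecg$ also solves~\eqref{eq:damped}. The central observation is that the added damping terms in $\CalL_d$ (see~\eqref{def:L-d-1}) couple only through the four families of inner products $\viint{\mu \VecX_{\pm,i}}{\cdot}_v$, $\viint{\mu \VecX_{0,i}}{\cdot}_v$, $\viint{\mu \CalL^{-1}(\mu \VecX_{0,i})}{\cdot}_v$, which are precisely the averages that Proposition~\ref{prop:g} forces to vanish on $\Vecf - \Vecg$ for all $x \geq 0$. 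Consequently $\CalL_d(\Vecf - \Vecg) = \CalL(\Vecf - \Vecg)$, so $\Vecf - \Vecg$ actually satisfies $\mu \del_x(\Vecf - \Vecg) + \CalL(\Vecf - \Vecg) = 0$. The constant (in $x$) addition $\sum_i c_{0,i} \VecX_{0,i} + \sum_j c_{+,j} \VecX_{+,j}$ lies in $\NullL$, so it contributes zero both to $\mu \del_x$ and to $\CalL$. Adding these gives $\mu \del_x \Vec\eta + \CalL \Vec\eta = 0$.

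Next I will verify the boundary condition at $x=0$. Using the incoming boundary data prescribed in~\eqref{eq:g-0-i-damped} and~\eqref{eq:g-plus-j-damped}, for each $i$ and $j$ the functions $\Vecg_{0,i}$ and $\Vecg_{+,j}$ satisfy, respectively,
\begin{align*}
 \Vecg_{0,i}\big|_{\mu>0} - \VecX_{0,i}\big|_{\mu>0} &= -\CalK\bigl((\VecX_{0,i} - \Vecg_{0,i})\big|_{\mu<0}\bigr),\\
 \Vecg_{+,j}\big|_{\mu>0} - \VecX_{+,j}\big|_{\mu>0} &= -\CalK\bigl((\VecX_{+,j} - \Vecg_{+,j})\big|_{\mu<0}\bigr).
\end{align*}
Subtracting the first with weight $c_{0,i}$ and the second with weight $c_{+,j}$ from the boundary relation $\Vecf\big|_{\mu>0} = \Vech + \CalK(\Vecf\big|_{\mu<0})$ and using linearity of $\CalK$, the correction terms reassemble exactly into $\CalK\bigl(\Vec\eta\big|_{\mu<0}\bigr)$, yielding $\Vec\eta\big|_{\mu>0} = \Vech + \CalK(\Vec\eta\big|_{\mu<0})$.

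For the condition at infinity, all three damped solutions $\Vecf,\Vecg_{0,i},\Vecg_{+,j}$ decay to $0$ as $x \to \infty$, hence
\begin{equation*}
 \lim_{x \to \infty} \Vec\eta = \sum_{i=1}^{\nu_0} c_{0,i}\VecX_{0,i} + \sum_{j=1}^{\nu_+} c_{+,j}\VecX_{+,j} \in H^0 \oplus H^+,
\end{equation*}
which is the admissible end-state. Uniqueness of $\Vec\eta$ as a solution to~\eqref{eq:kinetic} then follows directly from Lemma~\ref{lem:uniqueness}, since assumption (\PK) has been verified to hold.

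The only delicate point in the argument is the first step, namely showing that the four damping terms in $\CalL_d$ really do annihilate $\Vecf - \Vecg$; this is exactly the content packaged in Proposition~\ref{prop:g}, and once invoked the remainder of the proof reduces to bookkeeping with linearity. No further analytic estimates are needed, because the whole strategy has been engineered so that the nontrivial work (well-posedness of the damped problem and the careful cancellation of the added modes) has already been completed in the preceding propositions.
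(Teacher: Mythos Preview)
Your proof is correct and follows essentially the same approach as the paper: verify that Proposition~\ref{prop:g} kills the damping terms so $\Vecf-\Vecg$ satisfies the undamped equation, check that the constant shift by $\sum c_{0,i}\VecX_{0,i}+\sum c_{+,j}\VecX_{+,j}$ restores the correct boundary data at $x=0$ while providing the admissible end-state in $H^0\oplus H^+$, and then invoke uniqueness. If anything, you are slightly more explicit than the paper in spelling out why the damping terms vanish and in citing Lemma~\ref{lem:uniqueness} for the final step.
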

\begin{proof}
Note that $\Vec\eta_1 = \Vecf - \Vecg$ satisfies 
\begin{align*}
    \mu \del_x \Vec\eta_1 &+ \CalL \Vec\eta_1 = 0 \,,
\\
    \Vec\eta_1 \big|_{\mu>0}
    =  \Vech &+ \CalK\vpran{\Vec\eta \big|_{\mu<0}}
       - \vpran{\sum_{i=1}^{\nu_0} c_{0, i} \VecX_{0, i}\big|_{\mu>0}
       + \sum_{i=1}^{\nu_+} c_{+, j} \VecX_{+, j}\big|_{\mu>0}} \,,
&& x = 0\,,  \, \mu > 0 \,,
\\
   \Vec\eta_1 &\to 0 \,,
&& \text{as $x \to \infty$}.
\end{align*}
Hence $\Vec\eta$ defined in~\eqref{def:eta} satisfies
\begin{align*}
    \mu \del_x \Vec\eta &+ \CalL \Vec\eta = 0 \,,
\\
    \Vec\eta = & \, \Vech + \CalK\Vec\eta \,,
\hspace{3.4cm} x = 0 \,, \mu > 0 \,,
\\
   \Vec\eta \to \sum_{i=1}^{\nu_0}  &\,c_{0, i} \VecX_{0, i}
       + \sum_{i=1}^{\nu_+} c_{+, j} \VecX_{+, j} \,,
\hspace{1cm} \text{as $x \to \infty$}.
\end{align*}
where $c_{0, i}, c_{0, j}$ are the coefficients defined in Proposition~\ref{prop:g}. We thereby have recovered $\Vec\eta$ as the unique solution to~\eqref{eq:kinetic}.
\end{proof}
 
Combining the error estimate in Proposition \ref{prop:approximation-NK} and the damping terms, we derive the final error estimate for our method as follows:
\begin{prop}
Suppose $\vec\eta$ is constructed as in \eqref{def:eta} in Proposition~\ref{prop:eta} with $\Vecf$, $\Vecg_{+, i}$, $\Vecg_{0, j}$ being numerical approximations obtained in Proposition~\ref{prop:approximation-NK} to the damped equation with appropriate boundary conditions. Suppose $\Vecf_h$ is the unique solution to the equation \eqref{eq:kinetic}.  Then there exists a constant $C_0$ such that
\begin{equation*}
    \|\Vecf_h - \vec\eta\|_\Gamma 
    \leq 
    C_0 \left(\inf_{\vec w \in \Gamma_{NK}} \norm{\Vecf_h - \vec w}_{\Gamma} 
      + \inf_{\vec w \in \Gamma_{NK}} \norm{\Vecf - \vec w}_\Gamma
      + \delta_{NK} \norm{\Vecf}_{(L^2(a\dv\dx))^m} \right)\,, 
  \end{equation*}
where $\norm{\cdot}_\Gamma$ is the norm defined in \eqref{def:norm-Gamma} and
\begin{equation*}
    \delta_{NK} 
    := \sum_{i=1}^{\nu_+} \inf_{\vec w \in \Gamma_{NK}} \|\Vecg_{+, i} - \vec w\|_\Gamma
    + \sum_{j=1}^{\nu_0} \inf_{\vec w \in \Gamma_{NK}} \|\Vecg_{0, j} - \vec w\|_\Gamma \,. 
  \end{equation*}
\end{prop}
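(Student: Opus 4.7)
The plan is to compare $\vec\eta$ against the ``exact'' analogue of its construction given by Proposition~\ref{prop:eta} and to bound each discrepancy by the quasi-optimality result of Proposition~\ref{prop:approximation-NK}(b). Denote by $\Vecf^{e}$, $\Vecg_{+,i}^{e}$, $\Vecg_{0,j}^{e}$ the exact solutions to the damped problems~\eqref{eq:damped}, \eqref{eq:g-plus-j-damped}, \eqref{eq:g-0-i-damped}, and by $c_{0,i}^{e}$, $c_{+,j}^{e}$ the exact recovery coefficients from Proposition~\ref{prop:g}. Proposition~\ref{prop:eta} together with Lemma~\ref{lem:uniqueness} gives
\[
\Vecf_h = \Vecf^{e} - \sum_i c_{0,i}^{e} \Vecg_{0,i}^{e} - \sum_j c_{+,j}^{e} \Vecg_{+,j}^{e} + \sum_i c_{0,i}^{e} \VecX_{0,i} + \sum_j c_{+,j}^{e} \VecX_{+,j}.
\]
Subtract the definition~\eqref{def:eta} of $\vec\eta$ (built from numerical solutions and numerical coefficients) and split each product via the telescoping identity $c_* \Vecg_* - c_*^{e} \Vecg_*^{e} = (c_* - c_*^{e})\Vecg_*^{e} + c_*(\Vecg_* - \Vecg_*^{e})$. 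This produces three types of contribution: (i) the damped-equation error $\Vecf^{e} - \Vecf$; (ii) coefficient errors $(c_* - c_*^{e})$ multiplying the fixed exact functions $\Vecg_*^{e}$ or the finite-dimensional modes $\VecX_*$; and (iii) numerical coefficients $c_*$ multiplying $\Vecg_* - \Vecg_*^{e}$.

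Contributions of type (i) and (iii) are controlled directly by Proposition~\ref{prop:approximation-NK}(b), which delivers $\|\Vecf^{e} - \Vecf\|_\Gamma \leq \kappa_2 \inf_{\vec w \in \Gamma_{NK}}\|\Vecf^{e} - \vec w\|_\Gamma$ and, for each reflection mode, $\|\Vecg_*^{e} - \Vecg_*\|_\Gamma \leq \kappa_2 \inf_{\vec w \in \Gamma_{NK}}\|\Vecg_*^{e} - \vec w\|_\Gamma$; summing the latter yields exactly $\delta_{NK}$. For (ii) I need to control both $|c_*^{e}|$ and $|c_* - c_*^{e}|$. Inspecting the proof of Proposition~\ref{prop:g}, the coefficients arise by inverting the fixed matrix $\BA$ (on the subspace where its reduced part has full rank $\gamma_+ + \gamma_0$, the rank being pinned down by Lemma~\ref{lem:uniqueness}) applied to the moment vector $\VU_f$, whose entries are $(L^2(a\dsigma\dx))^m$-bounded linear functionals of the relevant damped solutions. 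Consequently,
\[
|c_*^{e}| \lesssim \|\Vecf^{e}\|_{(L^2(a\dsigma\dx))^m}, \qquad |c_* - c_*^{e}| \lesssim \|\Vecf - \Vecf^{e}\|_\Gamma + \sum_* \|\Vecg_*^{e} - \Vecg_*\|_\Gamma.
\]
Combined with the fact that $\|\Vecg_*^{e}\|_\Gamma$ and $\|\VecX_*\|_\Gamma$ are finite constants, the type-(ii) terms are absorbed into the first two infima plus a term of the form $\delta_{NK}\,\|\Vecf^{e}\|_{(L^2(a\dsigma\dx))^m}$.

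Assembling the three contributions and using the identity $\Vecf^{e} = \Vecf_h + \sum c_*^{e}\Vecg_*^{e} - \sum c_*^{e}\VecX_*$ to exchange $\inf_{\vec w}\|\Vecf^{e} - \vec w\|_\Gamma$ for $\inf_{\vec w}\|\Vecf_h - \vec w\|_\Gamma$ modulo finite-dimensional corrections (which are in turn absorbed into the $\delta_{NK}\|\Vecf^{e}\|$ term and into the other infima), together with the triangle inequality $\|\Vecf^{e}\| \leq \|\Vecf\| + \|\Vecf - \Vecf^{e}\|$ to re-express the norm, yields the claimed estimate. The main obstacle is establishing the uniform boundedness of the coefficient map $\VU_f \mapsto \{c_*^{e}\}$ in the appropriate Hilbert-space norms; this rests on carefully revisiting the rank-and-uniqueness argument of Proposition~\ref{prop:g} to extract a pseudoinverse with a norm independent of $N,K$, combined with the continuity of the moment functionals against $\mu\VecX_{\pm,i}$ and $\mu\CalL^{-1}(\mu\VecX_{0,k})$ in the weighted $L^2$-norm induced by $a(v)\dsigma$.
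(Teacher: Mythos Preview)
Your outline follows exactly the route the paper indicates: the paper omits the argument entirely and refers to \cite{LiLuSun}*{Proposition 3.9}, stating only that the proof rests on the recovery procedure of Propositions~\ref{prop:g}--\ref{prop:eta} combined with the quasi-optimality of Proposition~\ref{prop:approximation-NK}(b), and these are precisely the two tools you invoke, organized in the same way (compare exact and numerical reconstructions, telescope, bound coefficient errors via the linear system in Proposition~\ref{prop:g}).

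One technical point in your write-up does need repair. You assert that ``$\|\VecX_*\|_\Gamma$ are finite constants'', but the eigenmodes $\VecX_{0,i},\VecX_{+,j}$ are independent of $x$ and therefore do not belong to $(L^2(\dsigma\dx))^m$ over the half-line; the $\Gamma$-norm of each $\VecX_*$ is infinite. The same obstruction appears in the step where you use the identity $\Vecf^{e}=\Vecf_h+\sum c_*^{e}(\Vecg_*^{e}-\VecX_*)$ to exchange $\inf_w\|\Vecf^{e}-w\|_\Gamma$ for $\inf_w\|\Vecf_h-w\|_\Gamma$. This is not a divergence from the paper's strategy (the paper does not address it here either, deferring to the cited reference), but it does mean your telescoping has to be reorganized: the $\VecX_*$ terms must always remain paired with a decaying piece (e.g.\ via $\Vecf_h-\sum c_*^{e}\VecX_* = \Vecf^{e}-\sum c_*^{e}\Vecg_*^{e}\in\Gamma$ and its numerical analogue) so that every object whose $\Gamma$-norm is taken actually lies in $\Gamma$, and the scalar errors $|c_*-c_*^{e}|$ are tracked separately rather than absorbed through $\|\VecX_*\|_\Gamma$.
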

\begin{proof}
The proof of this proposition only depends on the recovery procedures and the quasi-optimality shown in Proposition \ref{prop:approximation-NK}. In particular, it does not depend on the specific form of the boundary conditions. Hence it is identical to the proof for Proposition 3.9 in \cite{LiLuSun} and we omit the details. 
\end{proof}

\section{Numerical examples} \label{section:numerics} In this section
we show the numerical results of our algorithm for three
models, which cover the cases 
for multi-species, multi-dimensional (in the velocity variable), and multi-frequency systems. The three examples are: a linear transport
equation with two species, linearized BGK/Boltzmann equations with velocity in $\R^2$, and a
linearized transport equation with multi-frequency.  We treat these
three cases in order. Recall the general form of the half-space
equation:
\begin{equation}
  \begin{aligned} \label{eq:kinetic-repeat}
    & \mu \del_x \Vecf + \CalL \Vecf  = 0 \,, && \text{in } (0, \infty)\times \mathbb{V} \,,
    \\
    & \Vecf \big|_{\mu > 0} = \Vech(\mu) + \CalK \vpran{\Vecf \big|_{\mu < 0}} \,, && \text{at } x = 0 \,.
\end{aligned}
\end{equation}
As mentioned before, the boundary conditions for all these examples are either the Dirichlet condition with given incoming data or the classical Maxwell boundary condition such that
\begin{align*}
    \CalK = \alpha_r \CalK_r = \alpha_d \CalK_d + \alpha_s \CalK_s \,,
\end{align*}
where $\CalK_d$ is the diffuse reflection and $\CalK_s$ the specular reflection. 
For the convenience of numerical computation, we list out two properties of such $\CalK$ which can be verified by direct calculation:
\begin{lem} \label{lem:K-Maxwell-exp}
Let $\CalK$ the Maxwell boundary operator and let $\bar\CalK, \bar\CalK_s, \bar\CalK_d$ be the operators defined in Lemma~\ref{lem:bar-K-invertible}. Then
\begin{itemize}
\item[(a)] $\bar\CalK_s = \I$; 
\smallskip

\item[(b)] $(\I + \bar\CalK)^{-1}$ has the explicit form as 
\begin{align} \label{eq:I-bar-K-inv}
   (\I + \bar\CalK)^{-1} 
   = \frac{1}{1 + \alpha_{s}} \vpran{\I - \gamma_d \bar\CalK_{d}} \,,
\end{align}
where $\gamma_d = \alpha_{d}\bigl(1 +  \alpha_{d} + \alpha_{s}\bigr)^{-1}$. 
\end{itemize}
\end{lem}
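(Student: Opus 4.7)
I would handle the two parts separately, with part (a) being a short direct calculation and part (b) following from (a) together with an idempotency identity for $\bar\CalK_d$.

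For part (a), the plan is to substitute the specular kernel $R_s(v, v') = \delta(\mu+\mu')\delta(\bar v - \bar v')$ into \eqref{def:K-d-example}, use that $M$ is symmetric in $\mu$, and read off that $\CalK_s f$ is the reflection $v \mapsto f(-\mu, \bar v)$ on $\VV^+$. Then, following the device used in the proof of Lemma~\ref{lem:bar-K-invertible}, I would write $\bar\CalK_s f(\mu, \bar v) = \CalK_s g_1(\mu, \bar v)$ with $g_1(\mu', \bar v') = f(-\mu', \bar v')$; this immediately gives $g_1(-\mu, \bar v) = f(\mu, \bar v)$, hence $\bar\CalK_s = \I$.

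For part (b), linearity of $\CalK \mapsto \bar\CalK$ combined with part (a) gives
\begin{equation*}
   \bar\CalK = \alpha_d \bar\CalK_d + \alpha_s \bar\CalK_s = \alpha_d \bar\CalK_d + \alpha_s \I,
\qquad\text{so}\qquad
   \I + \bar\CalK = (1+\alpha_s)\I + \alpha_d \bar\CalK_d.
\end{equation*}
The crux is then the identity $\bar\CalK_d^2 = \bar\CalK_d$. I would establish this by plugging the diffuse kernel $R_d(v, v') = \mu (2\pi)^{-(d-1)/2} e^{-|v|^2/2}$ into \eqref{def:K-d-example}: the kernel factorizes as a function of $v$ times a function of $v'$, so the range of $\CalK_d$, and hence of $\bar\CalK_d$, consists of functions on $\VV^+$ that are constant in $v$. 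The normalization $\int_{\mu > 0} \mu M(v) \dv = (2\pi)^{-1/2}$ exactly cancels the prefactor that appears in $\bar\CalK_d$, so $\bar\CalK_d$ acts as the identity on its own range, which gives the idempotency.

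With $\bar\CalK_d^2 = \bar\CalK_d$ in hand, I would expand
\begin{equation*}
   \bigl((1+\alpha_s)\I + \alpha_d \bar\CalK_d\bigr)(\I - \gamma_d \bar\CalK_d)
   = (1+\alpha_s)\I + \bigl(\alpha_d - \gamma_d(1+\alpha_s+\alpha_d)\bigr)\bar\CalK_d,
\end{equation*}
and force the coefficient of $\bar\CalK_d$ to vanish; this determines $\gamma_d = \alpha_d/(1+\alpha_d+\alpha_s)$, and dividing by $(1+\alpha_s)$ yields \eqref{eq:I-bar-K-inv}. The same computation in the reversed order (using that $\I$ and $\bar\CalK_d$ commute) confirms a two-sided inverse. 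The only step that is not pure bookkeeping is the idempotency of $\bar\CalK_d$, and I do not expect a real obstacle there: structurally, the factorization $R_d(v, v') = c\,\mu\, M(v)$ makes $\CalK_d$ rank one with range spanned by the constant function on $\VV^+$, and the normalization of $M$ forces that constant to be a fixed point.
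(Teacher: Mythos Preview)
Your proposal is correct and is precisely the ``direct calculation'' the paper alludes to without spelling out; the paper gives no proof of this lemma. Your key step---the idempotency $\bar\CalK_d^2 = \bar\CalK_d$, which holds because diffuse reflection is a rank-one projection whose normalization fixes its range---is exactly what makes the algebra close, and the computation of the inverse from $(1+\alpha_s)\I + \alpha_d\bar\CalK_d$ is then routine.
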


\subsection{Linear Transport Equation with Two Species} 
\subsubsection{\underline{Formulation}} The first example that we
consider is the steady radiative transfer equation (RTE) with Thomson
(Rayleigh) scattering and polarization effect in planar geometry (see
\cite{Pom1973}*{Section 4.5}). In this model, the variables $I$ and
$Q$ denote the total intensity and the intensity difference of
light. The system \cite{Pom1973}*{Eq.~(4.211), page 135} depends on
the frequency which only serves as a parameter. Hence we simply ignore
the frequency dependence here. In this case the scattering
coefficients $\sigma, \sigma_s$ in \cite{Pom1973} are both constants. We consider a pure
scattering case with no source such that $\sigma = \sigma_s$ and
rescale $\sigma$ to be one. The speed of light $c$ is also normalized
to be one. Then the RTE has the form
\begin{equation} \label{eq:2-RTE}
\begin{gathered}
   \mu \del_x I 
   + I - \frac{1}{2} 
          \vpran{\int_{-1}^1 \vpran{1 + \frac{1}{2} p_2(\mu) p_2(\mu')}I(\mu') \dmu' 
                    - \frac{1}{2}\int_{-1}^1 p_2(\mu) \vpran{1 - p_2(\mu')} Q(\mu') \dmu'}
  = 0 \,,
\\
  \mu \del_x Q + Q 
      - \frac{1}{2} 
        \vpran{- \frac{1}{2} \int_{-1}^1 (1 - p_2(\mu)) p_2(\mu') I(\mu') \dmu'
                   + \frac{1}{2} \int_{-1}^1 (1 - p_2(\mu))(1 - p_2(\mu')) Q(\mu') \dmu'}
 = 0 \,,
\end{gathered}
\end{equation}
where $p_2(\mu) = \frac{3}{2} \mu^2 - \frac{1}{2}$ is the second-order Legendre polynomial. 

\subsubsection{\underline{Properties of $\CalL$ and $\CalK$}}
Denote $\Vecf = (I, Q)^T$.
Then the collision operator $\CalL$ has the form
\begin{align} \label{def:L}
  (\CalL \Vecf)(\mu)
  & = \Vecf(\mu) - \vint{\Sigma(\mu, \cdot) \Vecf(\cdot)}_{\mu} \,,
\qquad
   \Sigma(\mu, \mu')
=  \begin{pmatrix}
        \frac{1}{2} \vpran{1 + \frac{1}{2} p_2(\mu) p_2(\mu')} &  
        - \frac{1}{4} (1 - p_2(\mu)) p_2(\mu')  \\[2pt]
       - \frac{1}{4} (1 - p_2(\mu')) p_2(\mu) &
       \frac{1}{4} (1 - p_2(\mu)) (1- p_2(\mu'))
       \end{pmatrix}\,,
\end{align}
where we recall the notation $\viint{g_1}{g_2}_\mu = \int_{-1}^1 g_1 g_2 \dmu$. In this case, we have $\dsigma = \dmu$. First we check that
\begin{lem} \label{lem:assump-L-1}
The scattering operator $\CalL$ defined in~\eqref{def:L} satisfies (\PL1)-(\PL5).
\end{lem}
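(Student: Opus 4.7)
The plan is to verify (\PL1)--(\PL5) in order, with (\PL4)--(\PL5) forming the substantive core. Because the velocity variable is $\mu \in [-1,1]$, the weight $a(\mu) = (1+|\mu|)^{\kappa_{0}}$ is bounded above and below, so we may take $\kappa_{0}=0$; all weighted spaces collapse to $L^{2}([-1,1];\R^{2})$, and the domain in (\PL1) is the entire $L^{2}$. Self-adjointness (\PL1) reduces to the entry-by-entry check that $\Sigma(\mu,\mu')^{T}=\Sigma(\mu',\mu)$. Boundedness (\PL2) is immediate because $\Sigma$ is bounded on $[-1,1]^{2}$, so the integral part of $\CalL = \I - K$ is Hilbert--Schmidt, hence bounded.

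The engine behind (\PL3)--(\PL5) is the rank-two decomposition
\[
  \Sigma(\mu,\mu') = \tfrac{1}{2}\,\Vecm_{1}\Vecm_{1}^{T} + \tfrac{1}{4}\,\Vecm_{2}(\mu)\Vecm_{2}(\mu')^{T},
  \qquad
  \Vecm_{1} = \begin{pmatrix} 1 \\ 0 \end{pmatrix},
  \qquad
  \Vecm_{2}(\mu) = \begin{pmatrix} p_{2}(\mu) \\ -(1-p_{2}(\mu)) \end{pmatrix},
\]
verified by direct multiplication. Because $\int_{-1}^{1} p_{2}\,\dmu = 0$, the two vector fields are $L^{2}$-orthogonal with $\norm{\Vecm_{1}}^{2}=2$ and $\norm{\Vecm_{2}}^{2}=14/5$. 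Expanding $\Vecf = \alpha\,\Vecm_{1}/\sqrt{2} + \beta\,\Vecm_{2}/\sqrt{14/5} + \Vecf^{\perp}$ in the corresponding orthonormal basis yields
\[
  \viint{\Vecf}{\CalL \Vecf}_{v} = \norm{\Vecf}^{2} - \tfrac{1}{2}\viint{\Vecf}{\Vecm_{1}}_{v}^{2} - \tfrac{1}{4}\viint{\Vecf}{\Vecm_{2}}_{v}^{2} = \tfrac{3}{10}\beta^{2} + \norm{\Vecf^{\perp}}^{2}.
\]
This single identity establishes (\PL4), identifies $\NullL = \Span\{\Vecm_{1}\}$ (so (\PL3) holds with a one-dimensional null space contained in $L^{\infty}$), and, since $\norm{\CalP^{\perp}\Vecf}^{2} = \beta^{2} + \norm{\Vecf^{\perp}}^{2}$, simultaneously upgrades to the stronger coercivity \eqref{coerv-1} with $c_{0}=3/10$.

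For (\PL5) I would now establish the general implication that \eqref{coerv-1}, combined with the damping structure of \eqref{def:L-d-1}, yields full coercivity on $L^{2}(\dsigma)$. In the present example $\CalP_{1}\VecX_{0,1}=\CalP(\mu\VecX_{0,1})=0$ because $\mu\VecX_{0,1}$ is odd in $\mu$, so $\nu_{\pm}=0$ and $\nu_{0}=1$, and only the two $H^{0}$-damping terms survive. Setting $\vec\psi := \CalL^{-1}(\mu\VecX_{0,1})$, which is well-defined because $\mu\VecX_{0,1} \in (\NullL)^{\perp}$ by definition of $H^{0}$, the decisive quantitative input is
\[
  \viint{\mu\vec\psi}{\VecX_{0,1}}_{v} = \viint{\vec\psi}{\CalL\vec\psi}_{v} \geq \tfrac{3}{10}\norm{\vec\psi}^{2} =: \gamma > 0,
\]
from \eqref{coerv-1} applied to $\vec\psi \in (\NullL)^{\perp}$. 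Writing $\Vecf = c\,\VecX_{0,1} + \CalP^{\perp}\Vecf$ and applying Young's inequality produces $\viint{\mu\vec\psi}{\Vecf}_{v}^{2} \geq \tfrac{1}{2}\gamma^{2} c^{2} - C\norm{\CalP^{\perp}\Vecf}^{2}$ with $C = \norm{\mu\vec\psi}^{2}$; choosing $\alpha$ small enough that $\alpha C\norm{\CalP^{\perp}\Vecf}^{2}$ is absorbed into the $\tfrac{3}{10}\norm{\CalP^{\perp}\Vecf}^{2}$ already present from $\viint{\Vecf}{\CalL\Vecf}_{v}$ then yields $\viint{\Vecf}{\CalL_{d}\Vecf}_{v} \geq \sigma_{0}(c^{2}+\norm{\CalP^{\perp}\Vecf}^{2}) = \sigma_{0}\norm{\Vecf}^{2}$.

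The main obstacle is precisely this last step: because $\mu\VecX_{0,1}$ is itself orthogonal to $\NullL$, the bare damping term $\alpha\,\viint{\mu\VecX_{0,1}}{\Vecf}_{v}^{2}$ gives no lower bound on the null-space component $c^{2}$, and coercivity on $\NullL$ is recovered only through the composite mode $\mu\CalL^{-1}(\mu\VecX_{0,1})$, whose pairing with $\VecX_{0,1}$ is strictly positive by the already-established \eqref{coerv-1}. Once this template is in place, the same argument transfers verbatim to the general setting with nonzero $H^{\pm}$, since $\viint{\mu\VecX_{\pm,k}}{\VecX_{\pm,k}}_{v} \gtrless 0$ supplies the analogous positive pairings for the $H^{\pm}$ damping terms.
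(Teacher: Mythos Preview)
Your proposal is correct and follows the same overall strategy as the paper: verify (\PL1)--(\PL4) directly, observe that $\CalL$ satisfies the stronger partial coercivity \eqref{coerv-1}, and then deduce (\PL5) by showing that the damping term $\alpha\,\viint{\mu\CalL^{-1}(\mu X_{0})}{\Vecf}^{2}$ controls the null-space component while the loss it creates on $(\NullL)^{\perp}$ is absorbed into \eqref{coerv-1} for $\alpha$ small.

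The differences are presentational but worth noting. For (\PL4) you exhibit the explicit rank-two factorization $\Sigma(\mu,\mu')=\tfrac12\Vecm_{1}\Vecm_{1}^{T}+\tfrac14\Vecm_{2}(\mu)\Vecm_{2}(\mu')^{T}$, which yields the sharp constant $c_{0}=3/10$ and identifies $\NullL$ in one stroke; the paper only says ``by direct calculation'' without isolating this structure. For (\PL5) the paper proves the general implication ---\,partial coercivity plus the damping form \eqref{def:L-d-1} implies full coercivity for any admissible $\CalL$ with possibly nontrivial $H^{\pm}$\,--- and then specializes, whereas you argue the case $\nu_{\pm}=0$, $\nu_{0}=1$ first and sketch the general extension at the end. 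The paper's organization yields a reusable lemma invoked again in the later examples; yours is more direct here. One small fact the paper records but you do not exploit is that in this example $\CalL^{-1}(\mu X_{0})=\mu X_{0}$, so your abstract $\vec\psi$ and $\gamma$ are explicitly computable.
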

\begin{proof}
The attenuation coefficient $a$ in this case is $a(\mu) = 1$. One can then directly check that $\CalL$ is self-adjoint and $\NullL = \Span\{\VecX_0\}$ with
\begin{align} \label{def:X-0-polar}
    \VecX_0 = (1, 0)^T 
\quad \text{and} \quad
   \CalL^{-1}(\mu \VecX_0) = \mu \VecX_0 \,.
\end{align}
Property (\PL2) is also readily verified by the boundedness of $\Sigma(\mu, \mu')$. Furthermore, we can show by direct calculation again that 
there exists a constant $\beta_2 > 0$ such that 
\begin{align} \label{property:coer-L}
     \int_{\VV} \Vecf \cdot \CalL \Vecf \dmu
\geq \beta_2 \norm{\CalP^\perp \Vecf}_{(L^2(\dmu))^m}^2 \,,
\end{align}
where $\CalP^\perp$ is the projection onto $(\NullL)^\perp$.  Hence (\PL4) holds. 

To show that $\CalL$ satisfies (\PL5), we prove a more general statement: suppose $\CalL$
satisfies (\PL1)-(\PL3) together with \eqref{property:coer-L}, then
there exist constants $\alpha, \sigma_0 > 0$ such that (\PL5) holds. 
Indeed,
write $\Vecf = \Vecf^\perp + \Vecf_0 + \Vecf_+ + \Vecf_-$, where
\begin{align*}
    \Vecf^\perp = \CalP^\perp \Vecf \,,
\qquad
   \Vecf_{\,0} &= \sum_{i=1}^{\nu_0} \viint{\VecX_{0, i}}{\Vecf}_v \VecX_{0, i} \,,
\qquad
   \Vecf_\pm = \sum_{i=1}^{\nu_\pm} \viint{\VecX_{\pm, j}}{\Vecf}_v \VecX_{\pm, j} \,.
\end{align*}
Recall that $X_{0, i}, X_{\pm, j}$ are defined in Section~\ref{assump:L}.
Then by Cauchy-Schwarz there exist $c_1, c_2 > 0$ such that
\begin{equation}
\begin{gathered} \nn
     \viint{\Vecf}{\CalL \Vecf}_v 
\geq 
    \beta_2 \norm{\Vecf^\perp}_{(L^2(\dsigma))^m}^2 \,,
\qquad
   \sum_i \viint{\mu \VecX_{0,i}}{\Vecf}_v^2
   + \sum_i \viint{\mu \CalL^{-1}(\mu \VecX_{0,i})}{\Vecf}_v^2 \geq 0 \,,
\\
      \sum_i \viint{\mu \CalL^{-1}(\mu \VecX_{0,i})}{\Vecf}_v^2
\geq c_1 \norm{\Vecf_0}_{L^2(\dsigma)}^2
        - c_2 \vpran{\norm{\Vecf^\perp}_{L^2(\dsigma)}^2
                             + \norm{\Vecf_+}_{L^2(\dsigma)}^2
                             + \norm{\Vecf_-}_{L^2(\dsigma)}^2}  \,,
\\  
     \sum_{j} \viint{\mu \VecX_{+,j}}{\Vecf}_v^2
     +   \sum_k \viint{\mu \VecX_{-,k}}{\Vecf}_v^2
 \geq
    c_1 \vpran{\norm{\Vecf_+}_{L^2(\dsigma)}^2
                  + \norm{\Vecf_-}_{L^2(\dsigma)}^2 }
    - c_2 \norm{\Vecf^\perp}_{L^2(\dsigma)}^2 \,.
\end{gathered}
\end{equation}
Therefore, if the coefficient $\alpha$ in the definition of $\CalL_d$ in~\eqref{def:L-d-1} is small enough, then
\begin{align} \label{est:L-d-1}
   \int_\VV \Vecf \cdot \CalL_d \Vecf \dsigma
&\geq 
   \frac{\beta_2}{2} \norm{\Vecf^\perp}_{(L^2(\dsigma))^m}^2
   + c_1 \alpha \vpran{\norm{\Vecf_+}_{(L^2(\dsigma))^m}^2
                  + \norm{\Vecf_-}_{(L^2(\dsigma))^m}^2 } \,.
\end{align}
Furthermore, 
\begin{equation}
\begin{aligned}
  \int_\VV \Vecf \cdot \CalL_d \Vecf \dsigma
&\geq
   \alpha\sum_i \viint{\mu \CalL^{-1}(\mu \VecX_{0,k})}{\Vecf}_v^2 
\\
&\hspace{-0.5cm}\geq
   c_1 \alpha \norm{\Vecf_{0}}_{(L^2(\dsigma))^m}^2
   - c_2 \alpha \vpran{\norm{\Vecf^\perp}_{(L^2(\dsigma))^m}^2
                       + \norm{\Vecf_+}_{(L^2(\dsigma))^m}^2
                  + \norm{\Vecf_-}_{(L^2(\dsigma))^m}^2} \,.
\end{aligned}
\end{equation}
Hence by multiplying~\eqref{est:L-d-1} by $\max\{\frac{2c_2}{c_1}, \frac{4c_1\alpha}{\beta_2}\}$, we have 
\begin{align*}
     \int_{\VV} \Vecf \cdot \CalL_d \Vecf \dsigma
\geq \sigma_1 \norm{\Vecf}_{(L^2(\dsigma))^m}^2
\end{align*}
for some $\sigma_1 > 0$ which depends on $\alpha$. Applying the above estimates to $\CalL$ given in~\eqref{def:L} (note that in this case we only have $H^0$ or $X_0$), we conclude that such $\CalL$ satisfies assumptions (\PL1)-(\PL5).
\end{proof}

The boundary operator $\CalK$ is defined as
\begin{align} \label{def:K}
   \CalK \vpran{\Vecf \big|_{\mu<0}} 
&  = \alpha_d \CalK_d \Vecf + \alpha_s \CalK_s\vpran{\Vecf \big|_{\mu<0}}
\\
&  = \alpha_d \vpran{\int_{\mu' < 0} |\mu'| \Vecf \cdot \VecX_0 \dmu'} 
                  \frac{\VecX_0}{\int_{\mu' > 0} \mu' \VecX_0 \cdot \VecX_0 \dmu'}
                   + \alpha_s \, \Vecf \big|_{\mu<0}(-\mu) \,, \nn
\end{align}
where $\VecX_0$ is given in~\eqref{def:X-0-polar}, and $\alpha_d, \alpha_s \geq 0$, $0 \leq \alpha_d + \alpha_s < 1$. We have 
\begin{lem} \label{lem:assump-K-1}
The boundary opeartor $\CalK$ defined in~\eqref{def:K} satisfies (\PK).
\end{lem}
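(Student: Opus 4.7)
\smallskip

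\noindent\textbf{Proof proposal for Lemma \ref{lem:assump-K-1}.} The plan is to set $\alpha_r = \alpha_d + \alpha_s$, which lies in $[0,1)$ by the hypothesis on the accommodation coefficients, and to write $\CalK_r = \tfrac{\alpha_d}{\alpha_r} \CalK_d + \tfrac{\alpha_s}{\alpha_r} \CalK_s$ (with the convention $\CalK_r = 0$ if $\alpha_r = 0$). Following Remark \ref{rmk:Maxwell-BC}, it then suffices to check the bound \eqref{bound:K-i} individually for $\CalK_s$ and $\CalK_d$, and to combine them via the triangle inequality in the Hilbert space $(L^2(\mu \One_{\mu>0} \dmu))^2$.

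For the specular piece, $(\CalK_s \Vecf)(\mu, \bar v) = \Vecf(-\mu, \bar v)$, and the change of variables $\mu \mapsto -\mu$ immediately gives
\begin{equation*}
   \int_{\mu>0} \mu \abs{\CalK_s \Vecf}^2 \dmu
   = \int_{\mu<0} \abs{\mu} \abs{\Vecf}^2 \dmu,
\end{equation*}
so $\CalK_s$ satisfies (\PK) with equality. For the diffuse piece, I note that with $\VecX_0 = (1,0)^T$ one has $\int_{\mu>0} \mu\, \VecX_0 \cdot \VecX_0 \dmu = \tfrac{1}{2}$, so
\begin{equation*}
   \CalK_d \Vecf = 2 \VecX_0 \int_{\mu'<0} \abs{\mu'}\, f_1(\mu') \dmu' \,,
\end{equation*}
where $f_1$ is the first component of $\Vecf$. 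Substituting and using Cauchy--Schwarz,
\begin{equation*}
   \int_{\mu>0} \mu \abs{\CalK_d \Vecf}^2 \dmu
   = 4 \int_0^1 \mu \dmu \left(\int_{\mu'<0} \abs{\mu'} f_1(\mu') \dmu'\right)^2
   \leq 2 \cdot \left(\int_{\mu'<0}\! \abs{\mu'} \dmu'\right) \! \int_{\mu'<0}\! \abs{\mu'} f_1^2(\mu') \dmu' ,
\end{equation*}
which equals $\int_{\mu'<0} \abs{\mu'} f_1^2 \dmu' \le \int_{\mu'<0} \abs{\mu'} \abs{\Vecf}^2 \dmu'$. Hence $\CalK_d$ also satisfies \eqref{bound:K-i}.

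Finally, since (\PK) says exactly that the linear map $\CalK_\bullet : (L^2(|\mu|\One_{\mu<0} \dmu))^2 \to (L^2(\mu \One_{\mu>0} \dmu))^2$ is a contraction, the triangle inequality applied to the convex combination gives
\begin{equation*}
   \norm{\CalK_r \Vecf}_{(L^2(\mu \One_{\mu>0} \dmu))^2}
   \le \tfrac{\alpha_d}{\alpha_r} \norm{\CalK_d \Vecf} + \tfrac{\alpha_s}{\alpha_r} \norm{\CalK_s \Vecf}
   \le \norm{\Vecf}_{(L^2(|\mu| \One_{\mu<0} \dmu))^2},
\end{equation*}
which is \eqref{bound:K-i} for $\CalK_r$ and completes the proof. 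There is no substantive obstacle: the argument is a direct Cauchy--Schwarz estimate on the rank-one diffuse kernel together with a triangle-inequality combination. The only mild point to watch is that the measure here is $\dsigma = \dmu$ (not $M\dv$), so one cannot directly invoke Lemma \ref{lem:suff-cond-PK}; a short hand calculation is used instead.
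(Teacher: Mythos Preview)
Your proof is correct and follows essentially the same approach as the paper: reduce to checking \eqref{bound:K-i} for $\CalK_d$ alone (via Remark~\ref{rmk:Maxwell-BC}), then apply Cauchy--Schwarz to the rank-one diffuse kernel. The paper writes the $\CalK_d$ estimate slightly more abstractly (keeping $\VecX_0$ symbolic and using its evenness in $\mu$ to cancel the normalizing constants), whereas you plug in $\VecX_0=(1,0)^T$ and compute the factors of $\tfrac12$ explicitly; the content is identical.
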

\begin{proof}
Since Lemma~\ref{lem:suff-cond-PK} only covers the scalar case, we show the details of proof for the current vector case. As commented in Remark~\ref{rmk:Maxwell-BC}, we only need to show that $\CalK_d$ satisfies (\PK). 
To this end, we use the symmetry of $\VecX_0$ in $\mu$ and Cauchy-Schwarz to obtain that
\begin{align*}
  \int_{\mu > 0} \mu |\CalK_d\Vecf|^2 \dmu
&= \vpran{\int_{\mu > 0} \mu  \abs{\VecX_0}^2 \dmu} 
     \abs{\frac{\int_{\mu < 0} |\mu| \Vecf \cdot \VecX_0 \dmu}{\int_{\mu > 0} \mu \VecX_0 \cdot \VecX_0 \dmu}}^2
\leq 
   {\int_{\mu < 0} |\mu| |\Vecf|^2 \dmu} \,,
\end{align*}
which is the desired property (\PK).
\end{proof}

The above two Lemmas show that the theory in Section~\ref{sec:well-posedness} applies to equation~\eqref{eq:2-RTE}. The details for terms in the weak formulation are as follows. First, 
\begin{align*}
    \CalL_d \Vecf 
&   = \CalL \Vecf - \alpha \mu \VecX_0 \viint{\mu \VecX_0}{\Vecf}_\mu 
       - \alpha \mu \CalL^{-1}(\mu \VecX_0) \viint{\mu \CalL^{-1}(\mu \VecX_0)}{\Vecf}_\mu
\\
& = \CalL \Vecf - \alpha \mu \VecX_0 \viint{\mu \VecX_0}{\Vecf}_\mu 
       - \alpha \mu^2 \VecX_0 \viint{\mu^2 \VecX_0}{\Vecf}_\mu \,.
\end{align*}
The Maxwell boundary condition is (recall \eqref{def:K})
\begin{align} \label{cond:bdry-polar} \Vecf \big|_{\mu> 0}  = \Vech +
  \CalK \vpran{\Vecf \big|_{\mu<0}} 
   = \Vech + \alpha_d \CalK_d + \alpha_s \CalK_s \,. 
\end{align}
Thus, by Lemma~\ref{lem:K-Maxwell-exp}, the boundary operators in the
weak formulation $\CalB(\Vecphi, \Vecf) = l(\Vecphi)$ are
\begin{align*}
    (\I + \bar\CalK)^{-1} 
    &= \frac{1}{1+\alpha_s} \vpran{\I - \frac{\alpha_d}{1+\alpha_d+\alpha_s} \bar\CalK_d}\,,
\\
   (\I + \bar\CalK)^{-1} (\I - \bar\CalK) 
   &= \frac{1-\alpha_s}{1+\alpha_s} 
     \vpran{\I - \frac{2\alpha_d}{(1-\alpha_s)(1+\alpha_d+\alpha_s)} \bar\CalK_d} \,,
\end{align*}
where
\begin{align} \label{def:bar-K-2-transport}
   \bar\CalK_d \Vecf
   = \bar\CalK_d (I, Q)^T
   = 2\vpran{\int_{\mu' > 0} \mu' I(0, \mu') \dmu'} (1, 0)^T \,.
\end{align}
Let $\Vecg_0$ be the special solution to the damped equation with the boundary condition
\begin{align*}
 \Vecg_0 \big|_{\mu>0} 
 = \vpran{\VecX_0 - \CalK (\VecX_0 \big|_{\mu<0})} + \CalK (\Vecg_0 \big|_{\mu<0}) \,, \qquad x = 0 \,. 
\end{align*}
Then the true solution is given by 
\begin{align*}
     \Vecf_h = \Vecf - c_h \Vecg_0 + c_h \VecX_0 \,,
\end{align*}
where $\Vecf$ is the solution to the damped equation with the boundary condition~\eqref{cond:bdry-polar} and $c_h = \frac{\viint{\mu \VecX_0}{\Vecf}_\mu}{\viint{\mu \VecX_0}{\Vecg_0}_\mu}$.

\subsubsection{\underline{Algorithm}}\label{sec:algorithm_ms}
For this example, we choose half-space Lagendre polynomials as the basis functions for each component of $(I, Q)^T$. Namely, we first find the half-space Lagendre polynomials by:
\begin{equation*}
    \int_0^1 \tilde{\phi}_m(\mu)\tilde{\phi}_n(\mu) \dmu = \delta_{mn}\,,\quad 
    m, n = 1, 2, \cdots\,.
\end{equation*}
Then we use the even-odd extension to obtain the basis functions
for the finite-dimensional space over $[-1,1]$:
\begin{equation*}
\Gamma_{N,1} = \Span\left\{\phi_{m} \right\}_{m=1}^{2N-1} \,,
\end{equation*}
where
\begin{align*}
   \phi_{2m}(\mu) 
  = \begin{cases}
      \tilde\phi_m(\mu) \,, & \mu \in [0, 1] \,, \\[2pt] 
      \tilde\phi_m(-\mu) \,, & \mu \in [-1, 0)
    \end{cases}
\qquad \text{and} \qquad
   \phi_{2m-1}(\mu) 
   = \begin{cases}
        \tilde{\phi}_m(\mu) \,, & \mu \in [0, 1] \,, \\[2pt]
        -\tilde{\phi}_m(-\mu) \,, & \mu \in [-1, 0) \,.
        \end{cases}
\end{align*}
In this multi-species case, $\Vecf$ is a two dimensional vector and
the basis function for $\Vecf$ is chosen to be:
\begin{equation*}
\Gamma_N = \Span\left\{\vec{\phi}_m \right\}_{m=1}^{4N-2} 
\end{equation*}
with
\begin{equation*}
\vec{\phi}_m = \left(\begin{array}{c}\phi_m(x)\\0\end{array}\right)
\quad \text{and} \quad
\vec{\phi}_{m+2N-1} =\left(\begin{array}{c}0\\ \phi_{m}(x)\end{array}\right)\,,
\quad m = 1, \cdots, 4N-2 \,.
\end{equation*}
Using these basis functions, the ODE system becomes
\begin{equation*}
\mathsf{A}\frac{\rd}{\rd x}\vec{\alpha} = \mathsf{B}\vec{\alpha}\,,
\end{equation*}
where 
\begin{align*}
    \mathsf{A}_{mn} = \int_{-1}^1 \mu \Vecphi_m \cdot \Vecphi_n \dmu \,,
\qquad
    \mathsf{B}_{mn} = \int_{-1}^1 \Vecphi_m \cdot \CalL_d \Vecphi_n \dmu
\end{align*}
for $m, n = 1, \cdots, 4N-2$. Therefore both $\mathsf{A}$ and $\mathsf{B}$ are of size $(4N-2)\times (4N-2)$.

\subsubsection{\underline{Numerical Results}} 
In this part we show the numerical results regarding this multi-species model with pure incoming data. 
\smallskip

\noindent\emph{Example 4.1 (Incoming boundary condition).}
Set $\Vech = (2\mu, \mu)^T$ and $\alpha_d = \alpha_s = 0$. The numerical solutions at $x=0$ for both components are shown on the left in Figure~\ref{fig:MS_Dirichlet_recover}. 
The plot on the right in Figure~\ref{fig:MS_Dirichlet_recover} shows the convergence rate of the second component $Q$ where we observe an algebraic convergence rate. This is within expectation, as even though the even-odd decomposition captures the jump discontinuity at $\mu = 0$, the solution still has a weak derivative discontinuity near $\mu = 0$ \cite{CLT2014, TF2013}.
\begin{figure}
\centering
\includegraphics[height = 0.25\textheight,width = 0.4\textwidth]{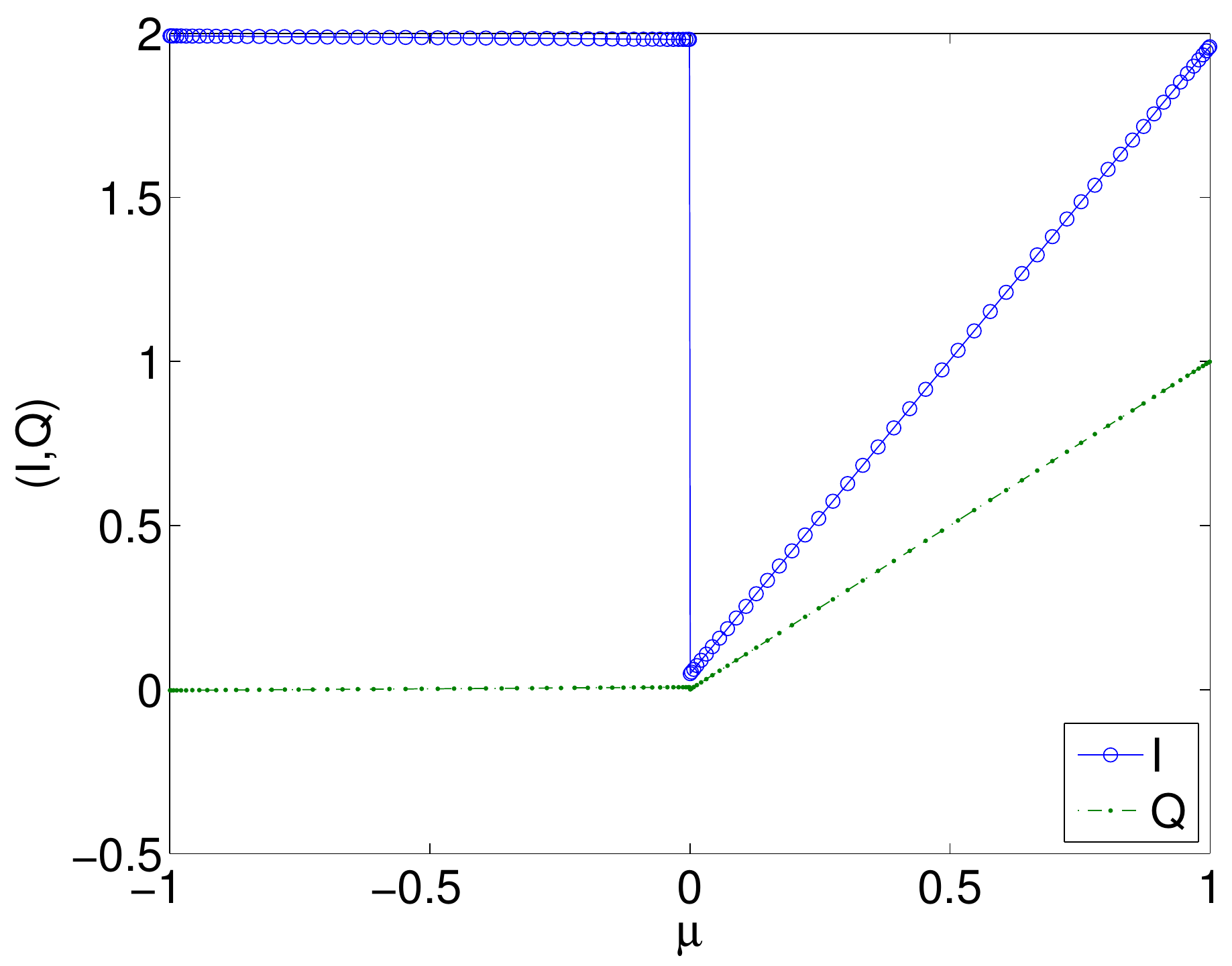}
\includegraphics[height = 0.25\textheight,width = 0.4\textwidth]{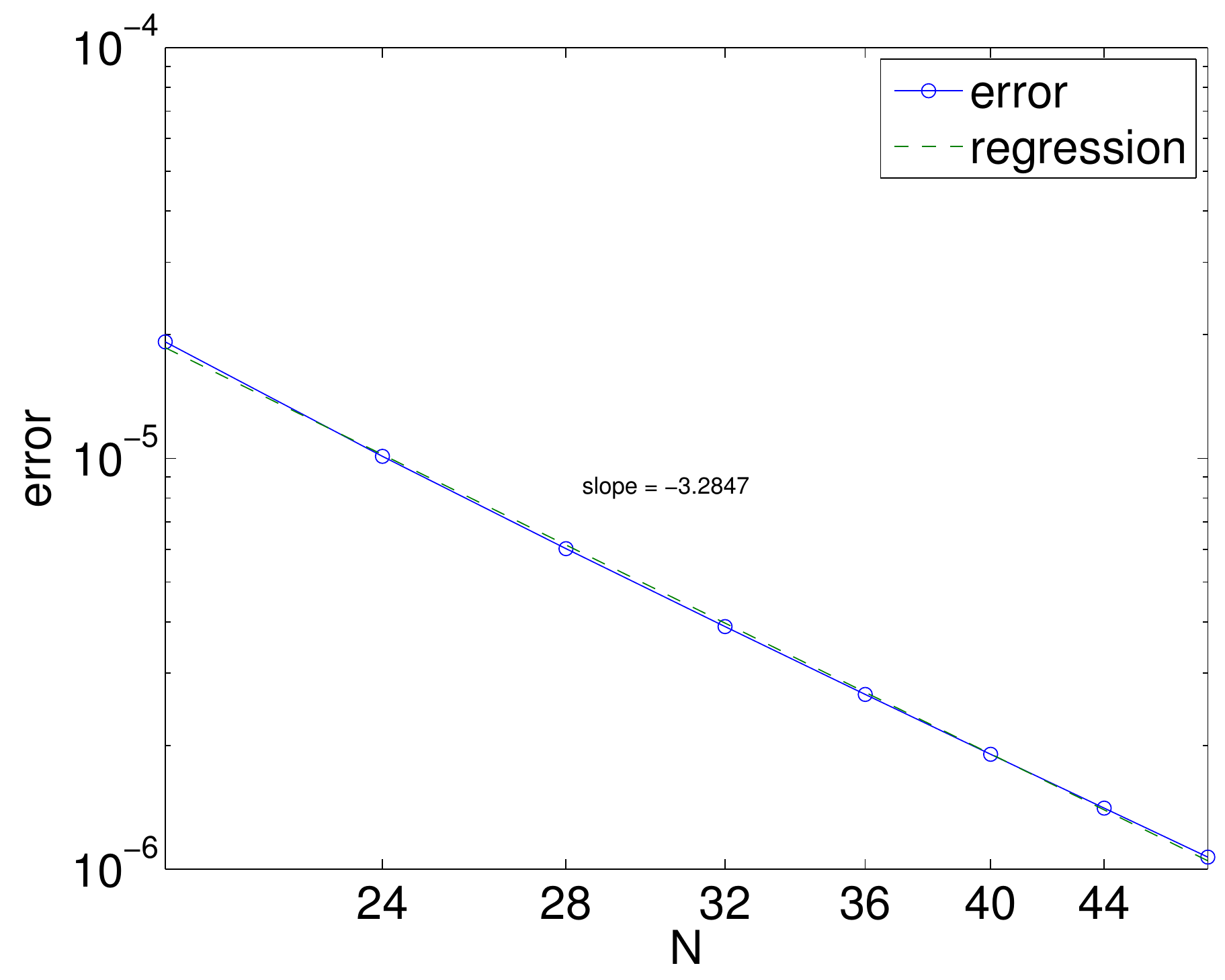}
\caption{(Example 4.1) For the two-species RTE~\eqref{eq:2-RTE}, we set $\Vech =
  (2\mu, \mu)^T$ and $\alpha_d = \alpha_s =0$. The left panel shows
  the numerical solutions at $x=0$ for both species using $31$ (i.e. $N = 16$) basis functions. The right panel shows the convergence rate of
  the $Q$ component of the RTE.}\label{fig:MS_Dirichlet_recover}
\end{figure}

\subsection{Linearized BGK Equation} 

\subsubsection{\underline{Formulation}}
The second example we consider is the time-independent linearized BGK equation for a single species with its velocity $v = (\mu, v_y) \in \R^2$. Here we normalize the wall temperature and denote the absolute Maxwellian $M$ as the wall Maxwellian such that
\begin{align*}
     M(v) = \frac{1}{2 \pi} e^{- \frac{|v|^2}{2}} \,.
\end{align*}
Suppose $F$ is the density function for the nonlinear BGK equation. Let $f$ be the perturbation such that
\begin{align*}
     F 
     = M + M f \,.
\end{align*}
Then the linearized collision operator $\CalL$ has the form
\begin{align} \label{def:L-BGK}
   \CalL f = f - \CalP f \,, 
\end{align}
where $\CalP: L^2(M\dv) \to \Span\{1, v, |v|^2\}$ is the projection operator. In this case, $\dv$ is the usual Lebesgue measure and $\dsigma = M\dv$.

The boundary operator $\CalK$ is given by
\begin{equation}
\begin{aligned} \label{def:K-BGK}
   \CalK f 
 & = \alpha_d \CalK_d f + \alpha_s \CalK_s\vpran{f \big|_{\mu<0}} \\   
   &= \alpha_d \vpran{\int_{\mu' < 0} |\mu'| f(v) \, M \dv} 
                  \frac{1}{\int_{\mu' > 0} |\mu'| M \dv}
                   + \alpha_s f \big|_{\mu<0}(\CalR v)
\\
  & = \alpha_d \sqrt{2 \pi} \int_{\mu' < 0} |\mu'| f(v) \, M \dv
                   + \alpha_s f \big|_{\mu<0}(\CalR v)
\,,
\end{aligned}
\end{equation}
where $\CalR v = (-\mu, v_2)$, $\alpha_d, \alpha_s \geq 0$ and $\alpha_d + \alpha_s < 1$. We have
\begin{lem}
The linear operator $\CalL$ defined in \eqref{def:L-BGK} satisfies (\PL1)-(\PL5) and the boundary opeartor $\CalK$ defined in~\eqref{def:K-BGK} satisfies ({\PK}).
\end{lem}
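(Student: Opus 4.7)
The plan is to verify assumptions (\PL1)--(\PL5) directly from the projection structure of the linearized BGK operator, and to verify (\PK) by decomposing the Maxwell boundary operator into its diffuse and specular parts and treating each separately. Throughout, the attenuation coefficient for this example is $a(v) \equiv 1$, so all weighted spaces collapse to $L^2(M\dv)$, which streamlines everything.

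First I would dispose of (\PL1)--(\PL4). Since $\CalP$ is the $L^2(M\dv)$-orthogonal projection onto $\NullL = \Span\{1, v_1, v_2, |v|^2\}$, the operator $\CalL = \I - \CalP$ is bounded and self-adjoint on all of $L^2(M\dv)$, which immediately gives (\PL1) and (\PL2). The null space is four-dimensional and spanned by polynomials, and the Gaussian decay of $M$ places these in $L^p(M\dv)$ for every finite $p$, giving (\PL3). Nonnegativity is a one-line computation, $\viint{f}{\CalL f}_v = \|\CalP^\perp f\|_{L^2(M\dv)}^2 \geq 0$, which in fact already delivers the stronger coercivity \eqref{coerv-1} on $(\NullL)^\perp$ with $a \equiv 1$, so (\PL4) is in hand.

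Next I would obtain (\PL5) by invoking the abstract argument already carried out in the proof of Lemma \ref{lem:assump-L-1}. That argument uses only (\PL1)--(\PL3) together with the partial coercivity \eqref{coerv-1}: splitting $\Vecf = \CalP^\perp \Vecf + \Vecf_0 + \Vecf_+ + \Vecf_-$ along the eigenspaces of $\CalP_1$ and estimating the four damping contributions by Cauchy-Schwarz yields full coercivity of $\CalL_d$ on $L^2(M\dv)$ once $\alpha$ is chosen small enough. Since the hypotheses have just been checked, this abstract step transfers verbatim and no new analytic work is required.

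Finally, for (\PK), write $\CalK = \alpha_r \CalK_r$ with $\alpha_r = \alpha_d + \alpha_s < 1$ and $\CalK_r = \tfrac{\alpha_d}{\alpha_r}\CalK_d + \tfrac{\alpha_s}{\alpha_r}\CalK_s$ as in Remark \ref{rmk:Maxwell-BC}. For $\CalK_s$, the substitution $\mu \mapsto -\mu$ combined with the $\mu$-symmetry of $M$ gives $\int_{\mu>0}\mu |\CalK_s f|^2 M\dv = \int_{\mu<0}|\mu||f|^2 M\dv$, so (\PK) holds with equality. For $\CalK_d$, I would invoke Lemma \ref{lem:suff-cond-PK} after identifying the kernel with the explicit Gaussian form $R(v,v') \propto \mu\, e^{-|v|^2/2}$ listed in Section \ref{sec:K} and checking the normalization and reciprocity conditions, or alternatively verify (\PK) by hand, using that $\CalK_d f$ is a constant in $v$ and applying Cauchy-Schwarz to $c = \sqrt{2\pi}\int_{\mu'<0}|\mu'| f M\dv'$ against $\int_{\mu>0}\mu M\dv = 1/\sqrt{2\pi}$. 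The convex combination then preserves (\PK) via the pointwise bound $|a\CalK_d f + b\CalK_s f|^2 \leq a|\CalK_d f|^2 + b|\CalK_s f|^2$ with $a + b = 1$. The hardest part, if any, is merely bookkeeping the Gaussian normalization constants in $d=2$ so that the BGK diffuse kernel matches the hypotheses of Lemma \ref{lem:suff-cond-PK}; no genuinely new idea is needed beyond what Section \ref{sec:K} and the proof of Lemma \ref{lem:assump-L-1} already provide.
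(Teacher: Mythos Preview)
Your proposal is correct and follows essentially the same approach as the paper: the paper's proof likewise notes that (\PL1)--(\PL4) together with the stronger coercivity~\eqref{property:coer-L} are classical for the linearized BGK operator, invokes the abstract argument from Lemma~\ref{lem:assump-L-1} to obtain (\PL5), and appeals to Lemma~\ref{lem:suff-cond-PK} for $\CalK_d$ (with the specular part handled as in Remark~\ref{rmk:Maxwell-BC}). Your write-up is simply more explicit about the projection structure and the convexity step for $\CalK_r$, but no new ideas are introduced on either side.
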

\begin{proof}
It is classical to show that $\CalL$ satisfies (\PL1)-(\PL4) and also the stronger coerciveness~\eqref{property:coer-L}. Therefore by the proof of Lemma~\ref{lem:assump-L-1}, condition (\PL5) also holds. The boundary operator $\CalK_d$ in this case fits the form in Lemma~\ref{lem:suff-cond-PK}, which guarantees that (\PK) holds.
\end{proof}

\begin{rmk}
Our method can also be applied to the linearized BGK equation for multi-species. The linearization of these species is chosen slightly differently depending on whether there is diffusion reflection or not. In the case where there is  nontrivial diffuse reflection from the wall, the equilibrium state for different particles will all be the same, which is the Maxwellian $M$ given by the wall.  We can then linearize the vector-valued density function $\vec F = (F_1, \cdots, F_m)$ as
\begin{align*}
    \vec F = M (1, \cdots, 1)^T + M(f_1, \cdots, f_m)^T \,.
\end{align*}

On the other hand, if there is only the incoming data and/or the specular reflection, $i.e.$, $\alpha_d = 0$, then we allow the equilibrium states of different species to be different. In this case, we linearize $\vec F = (F_1, \cdots, F_m)$ as
\begin{align*}
    F_i = M_i + \sqrt{M_i} f_i \,, 
\qquad i = 1, \cdots, m \,.
\end{align*}
The advantage of this linearization is that the function space for $\Vecf$ is given by $(L^2(\dv))^m$ instead of the weighted-$L^2$ by various Maxwellians $M_i$ for each component $f_i$. 
\end{rmk}

To define the particular damped operator in the case of linearized BGK equation, we compute the eigenmodes in $\NullL$: 
\begin{equation}
\begin{gathered} \nn
    X_{0, 1} =  \mu^2 + v_y^2 - 4 \,,
\qquad
    X_{0, 2} = v_y \,,
\\
    X_+ = \mu^2 + v_y^2 + 2 \mu \,,
\qquad
   X_- =  \mu^2 + v_y^2 - 2 \mu \,.
\end{gathered}
\end{equation}
The associated eigenspaces are 
\begin{align*}
    H^0 = \Span\{X_{0, 1}, X_{0, 2}\} \,,
\qquad
    H^\pm = \Span\{X_\pm\}.
\end{align*}
Moreover, $\CalL^{-1}(\mu X_{0,k})$ is computed as
\begin{align*}
    \CalL^{-1}(\mu X_{0, 1}) 
    = \mu X_{0, 1}
    = \mu (\mu^2 + v_y^2 - 4)  \,,
\qquad
    \CalL^{-1}(\mu X_{0, 2}) = \mu X_{0, 2} = \mu v_y \,.
\end{align*}
Hence, the damped operator has the form
\begin{align*}
  \CalL_d f 
 =& \CalL f + \alpha   
         \mu X_{+} \viint{\mu X_{+}}{f}_v
     + \alpha  
           \mu X_{-} \viint{\mu X_{-}}{f}_v
\\
  &  + \alpha \sum_{k=1}^{2} 
           \mu X_{0,k} \viint{\mu X_{0,k}}{f}_v                            
       + \alpha \sum_{k=1}^{2}
              \mu^2 X_{0,k}
             \viint{\mu^2 X_{0,k}}{f}_v \,,
\end{align*}
where $\viint{f_1}{f_2}_v = \int_{\R^2} f_1 f_2 M\dv$. The boundary condition is given as 
\begin{align*}
    f \big|_{\mu > 0} = h + \CalK \vpran{f \big|_{\mu<0}} \,,
\qquad x = 0 \,,
\end{align*}
where recall that $\CalK$ is given in~\eqref{def:K-BGK}. By Lemma~\ref{lem:K-Maxwell-exp},  the boundary operators in the weak formulation are
\begin{align*}
    (\I + \bar\CalK)^{-1} 
    &= \frac{1}{1+\alpha_s} \vpran{\I - \frac{\alpha_d}{1+\alpha_d+\alpha_s} \bar\CalK_d}\,,
\\
   (\I + \bar\CalK)^{-1} (\I - \bar\CalK) 
   &= \frac{1-\alpha_s}{1+\alpha_s} 
     \vpran{\I - \frac{2\alpha_d}{(1-\alpha_s)(1+\alpha_d+\alpha_s)} \bar\CalK_d} \,,
\end{align*}
where $\bar\CalK_d f= \sqrt{2\pi} \int_{\mu'>0} \mu' f(v) M\dv$.

In order to obtain the original solution to~\eqref{eq:kinetic}, we construct the special solutions $g_{0, 1}, g_{0, 2} , g_+$ such that
they satisfy the damped equation and the boundary conditions respectively:
\begin{align*}
 g_{0,1} \big|_{\mu>0} 
 = \vpran{X_{0,1} - \CalK (X_{0,1} \big|_{\mu<0})} + \CalK (g_{0,1} \big|_{\mu<0}) \,,
\\
 g_{0,2} \big|_{\mu>0} 
 = \vpran{X_{0,2} - \CalK (X_{0,2} \big|_{\mu<0})} + \CalK (g_{0,2} \big|_{\mu<0}) \,,
\\
 g_{+} \big|_{\mu>0} 
 = \vpran{X_{+} - \CalK (X_{+} \big|_{\mu<0})} + \CalK (g_{+} \big|_{\mu<0}) \,.
\end{align*}
The true solution $f_h$ is then given as
\begin{align*}
   f_h = f - \vpran{c_{0, 1} g_{0, 1} + c_{0, 2} g_{0, 2} + c_+ g_+}
            + \vpran{c_{0, 1} X_{0, 1} + c_{0, 2} X_{0, 2} + c_+ X_+} \,,
\end{align*}
where the coefficients $c_{0, 1}, c_{0, 2}, c_+$ satisfy that 
\begin{align*}
    \viint{\mu X_{0, 1}}{g} = \viint{\mu X_{0, 1}}{f} \,,
\qquad
    \viint{\mu X_{0, 2}}{g} = \viint{\mu X_{0, 2}}{f} \,,
\qquad
    \viint{\mu X_{+}}{g} = \viint{\mu X_{+}}{f} 
\end{align*}
with $g = c_{0, 1} g_{0, 1} + c_{0, 2} g_{0, 2} + c_+ g_+$. The unique solvability of $g$ is guaranteed by Proposition~\ref{prop:g}.

\subsubsection{\underline{Algorithm}:}
For the 2D-BGK case, we build the basis functions upon 
Hermite polynomials. Since the solution is regular in $v_y$, we use the full Hermite polynomials on $\R$ for $v_y$. To take into account of the jump discontinuity in $\mu = v_x$, we apply even/odd extensions of half-space Hermite polynomials on $[0, \infty)$.   
Specifically, the half-space Hermite polynomials $\{B_m(\mu)\}_{n=1}^\infty$ satisfy
\begin{equation*}
\int_0^\infty B_m(\mu)B_n(\mu) e^{-\frac{\mu^2}{2}} \dmu = \delta_{mn} \,.
\end{equation*}
Performing the even and odd extensions of $\{B_m(\mu)\}_{n=1}^\infty$ gives
\begin{equation*}
    B^E(\mu) 
    = \begin{cases}
          B_n(\mu)/\sqrt{2} \,, \quad & \mu>0 \,,\\[2pt]
          B_n(-\mu)/\sqrt{2} \,, \quad & \mu<0
        \end{cases}
\quad \text{and} \quad 
     B^O(\mu) 
     =  \begin{cases}
             B_n(\mu)/\sqrt{2} \,, \quad & \mu>0 \,,\\[2pt] 
             -B_n(-\mu)/\sqrt{2} \,,  \quad & \mu<0 \,.
           \end{cases}
\end{equation*}
Then the set of basis functions for the finite dimensional space in $\mu$ is given by 
\begin{equation*}
   \Gamma_{x,N}
   = \left\{\phi_{x,2n-1} \right\}_{n = 1}^{N+1}
       \cup \left\{\phi_{x,2n} \right\}_{n = 1}^{N}
   = \left\{B^O_{n-1}\right\}_{n = 1}^{N+1} 
      \cup \left\{B^E_{n-1} \right\}_{n = 1}^{N} \,.
\end{equation*}
The set of basis functions in $v_y$  is 
\begin{equation*}
    \Gamma_{y,N} 
    = \left\{\phi_{y,n} \right\}_{n=1}^{N} \,,\text{where}\quad\int_{-\infty}^\infty \phi_{y, m}(v_y)\phi_{n,y}(v_y) e^{-\frac{v_y^2}{2}} \dv_y = \delta_{mn} \,.
\end{equation*}
The basis for the approximation solution $f_N$ is then expanded by $\Gamma_{x,N}\otimes\Gamma_{y,N}$ such that 
\begin{align*}
   f_N(\mu, v_y) 
   = \sum_{m=1}^{2N+1}\sum_{n=1}^N
           \beta_{mn}\phi_{x,m}(\mu)\phi_{y,n}(v_y) \,.
\end{align*}
The ODE system still has the form 
\begin{align*}
   \mathsf{A}\frac{\rd}{\rd x}\alpha = \mathsf{B}\alpha \,,
\end{align*} 
where 
\begin{align*}
    \mathsf{A}_{mn}^{pq} 
    = \vpran{\int_{\R} \mu \phi_{x,m}(\mu) \phi_{x,n}(\mu) \dmu}
       \delta_{pq}  \,,
\quad
    \mathsf{B}_{mn}^{pq} 
    = \int_{\R^2} \phi_{x,m}(\mu)\phi_{y,p}(v_y) \, \big[\CalL_d \vpran{\phi_{x,n} \phi_{y,q}}\big](\mu, v_y) \dmu\dv_y \,.
\end{align*}

\subsubsection{\underline{Numerical Results}} Examples are shown in Figure~\ref{fig:MD_Dirichlet_recover} and Figure~\ref{fig:MD_Maxwell_recover} for both the pure incoming data and the Maxwell cases. 

\smallskip

\noindent \emph{Example 4.2.1 Incoming boundary condition.}  In the
first example, we use incoming boundary
condition. Figure~\ref{fig:MD_Dirichlet_recover} verifies that if $h
\in H^0 \oplus H^+$, then the solution is simply $f_h = h$, as
expected from the theory.  \smallskip

\begin{figure}
\centering
\includegraphics[height = 0.2\textheight,width = 0.3\textwidth]{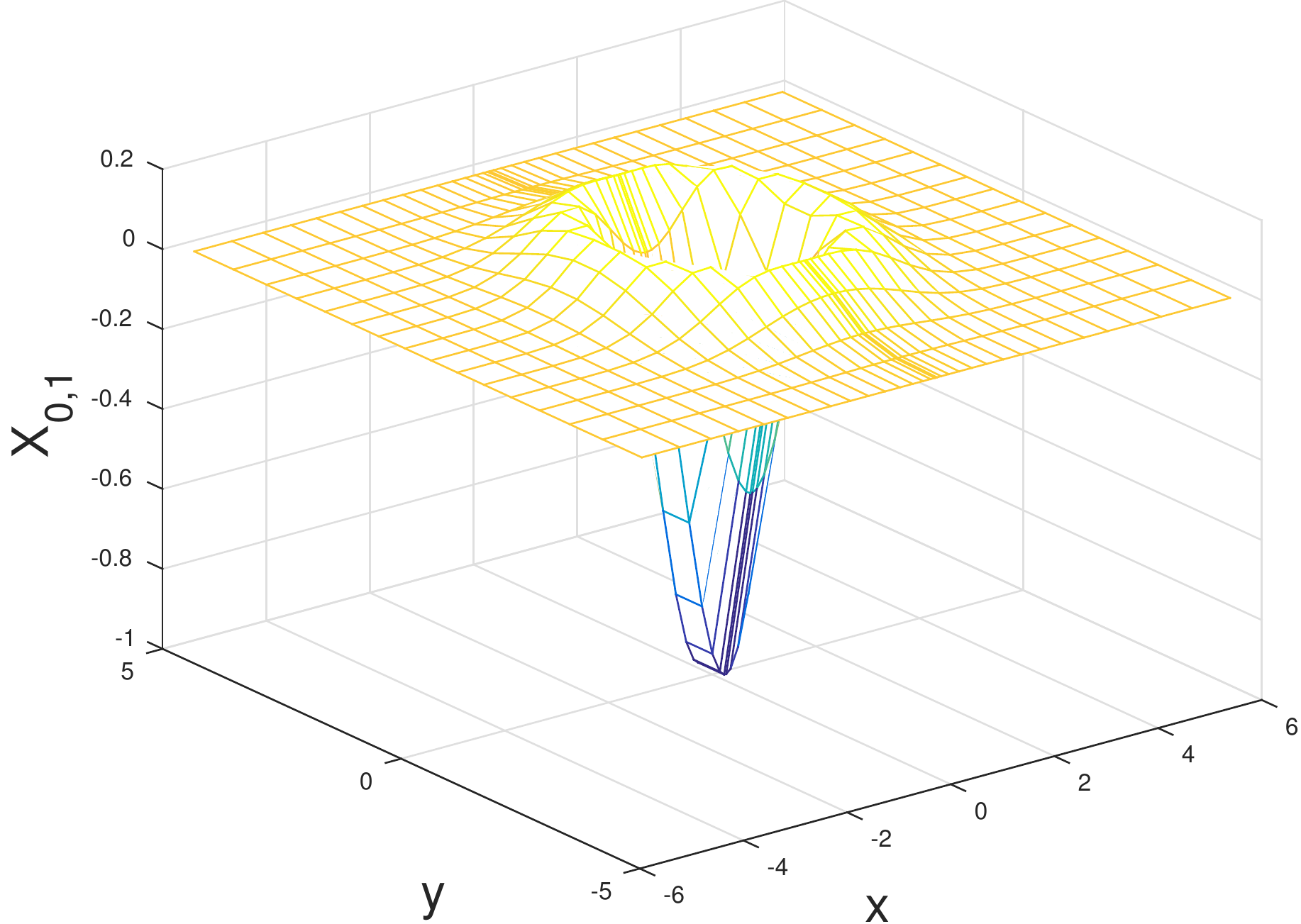}
\includegraphics[height = 0.2\textheight,width = 0.3\textwidth]{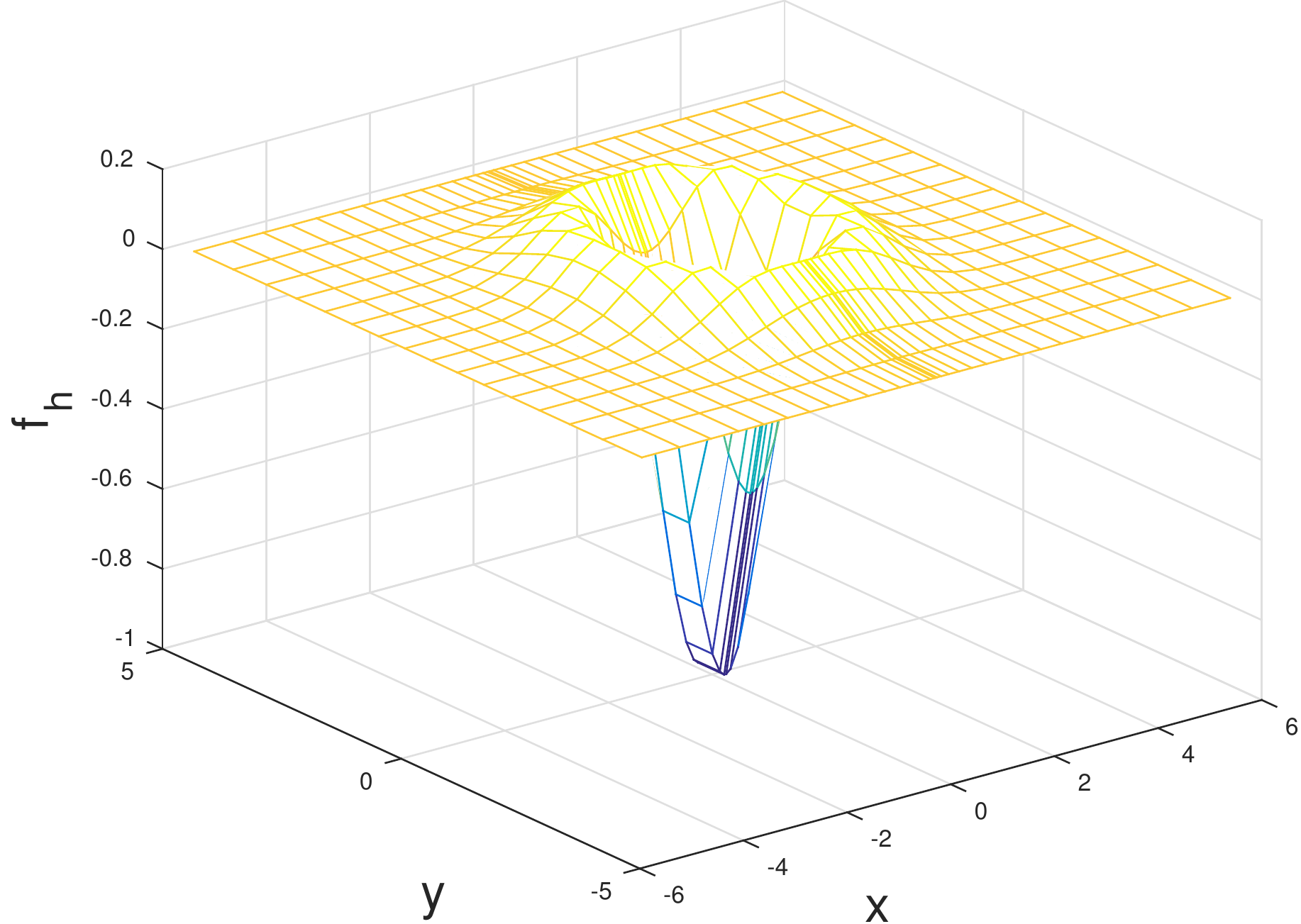}
\includegraphics[height = 0.2\textheight,width = 0.3\textwidth]{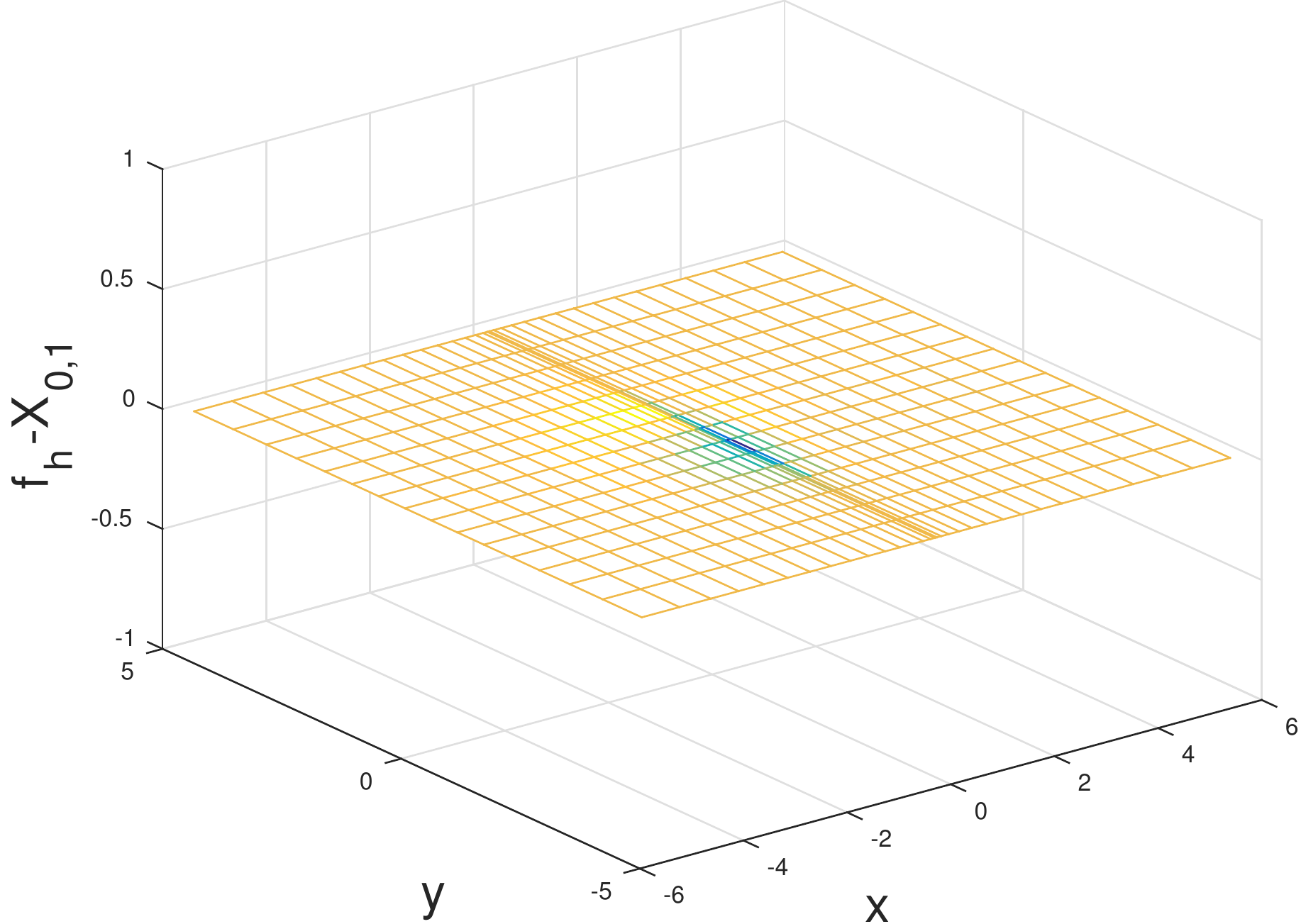}\\
\includegraphics[height = 0.2\textheight,width = 0.3\textwidth]{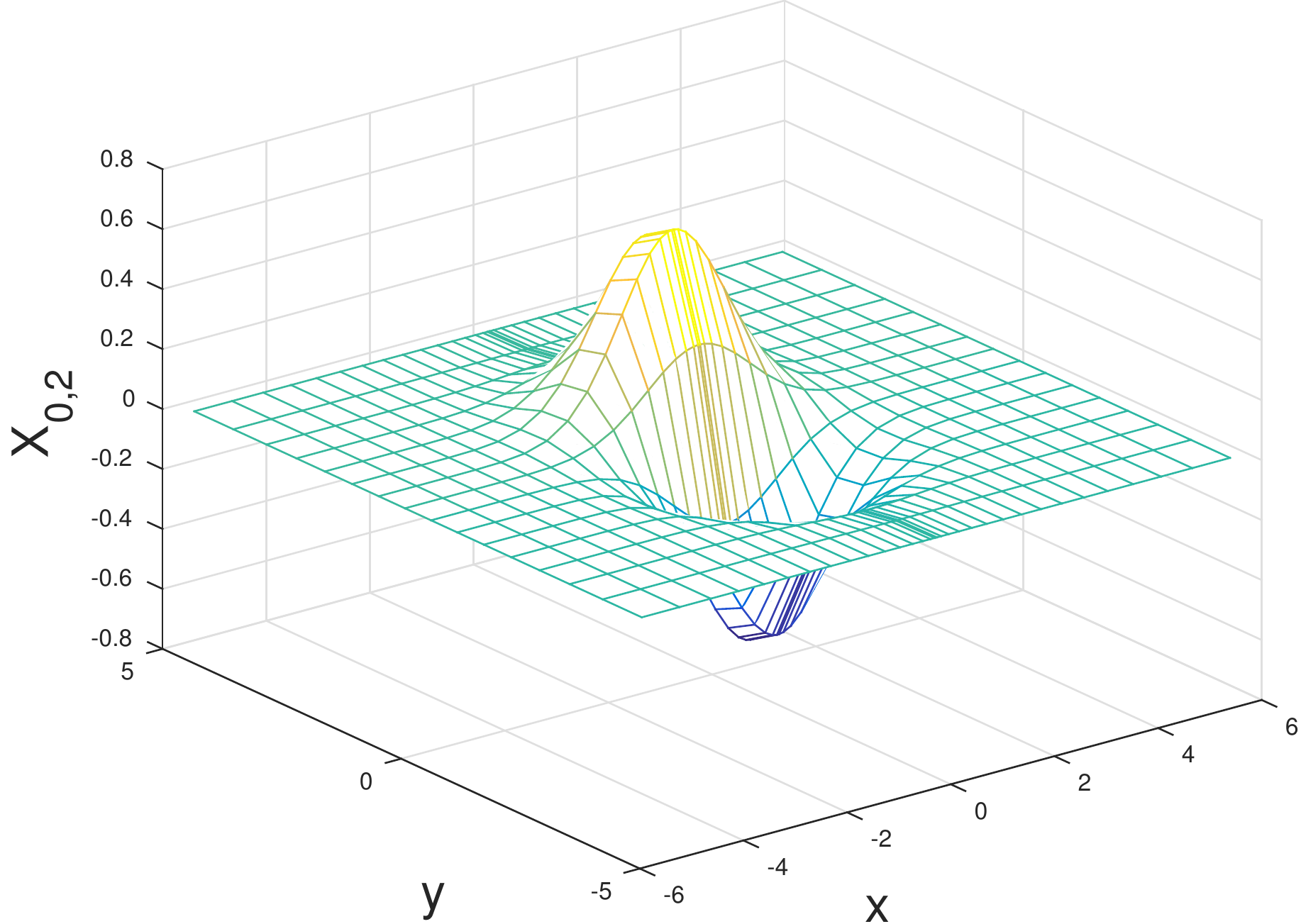}
\includegraphics[height = 0.2\textheight,width = 0.3\textwidth]{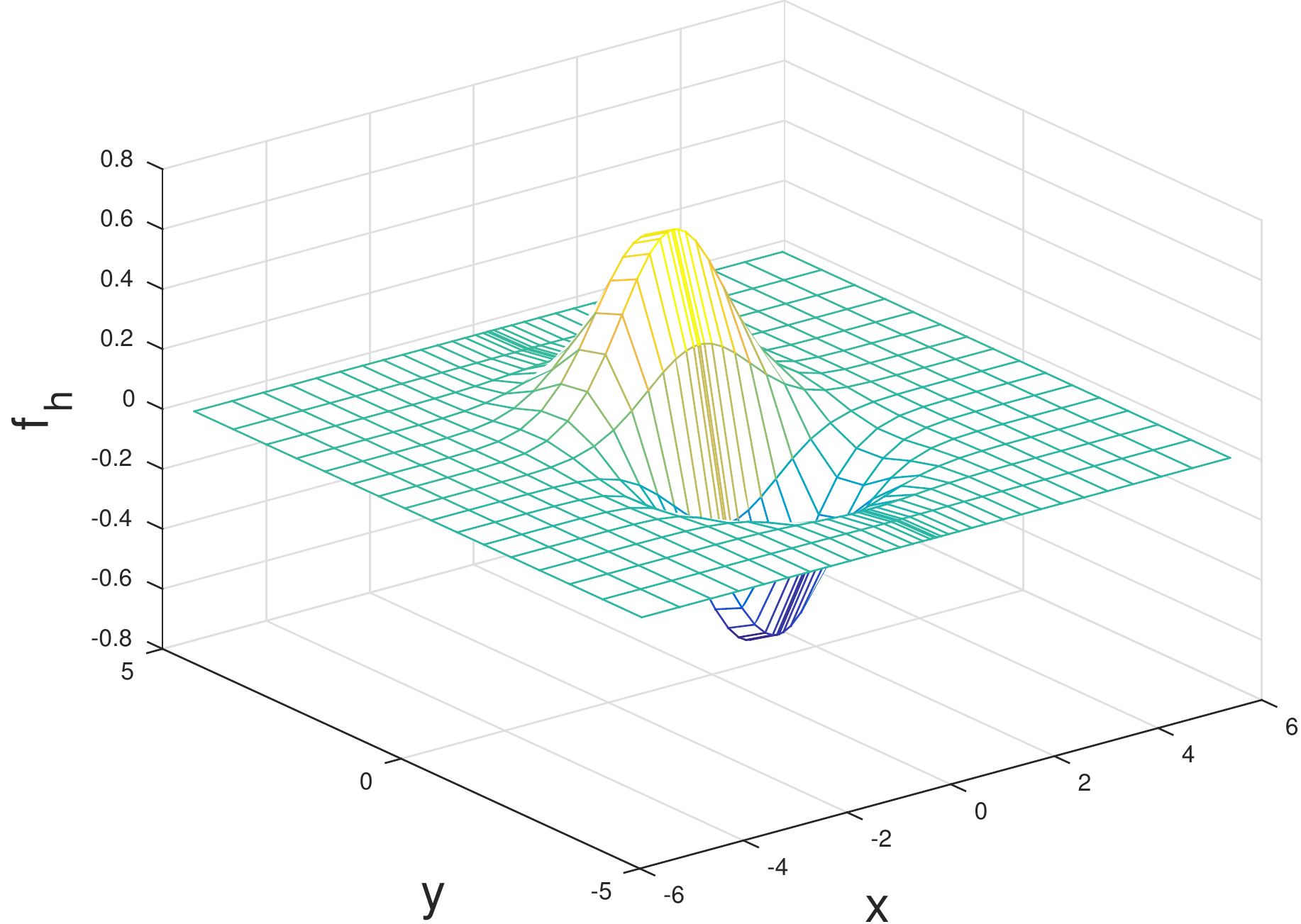}
\includegraphics[height = 0.2\textheight,width = 0.3\textwidth]{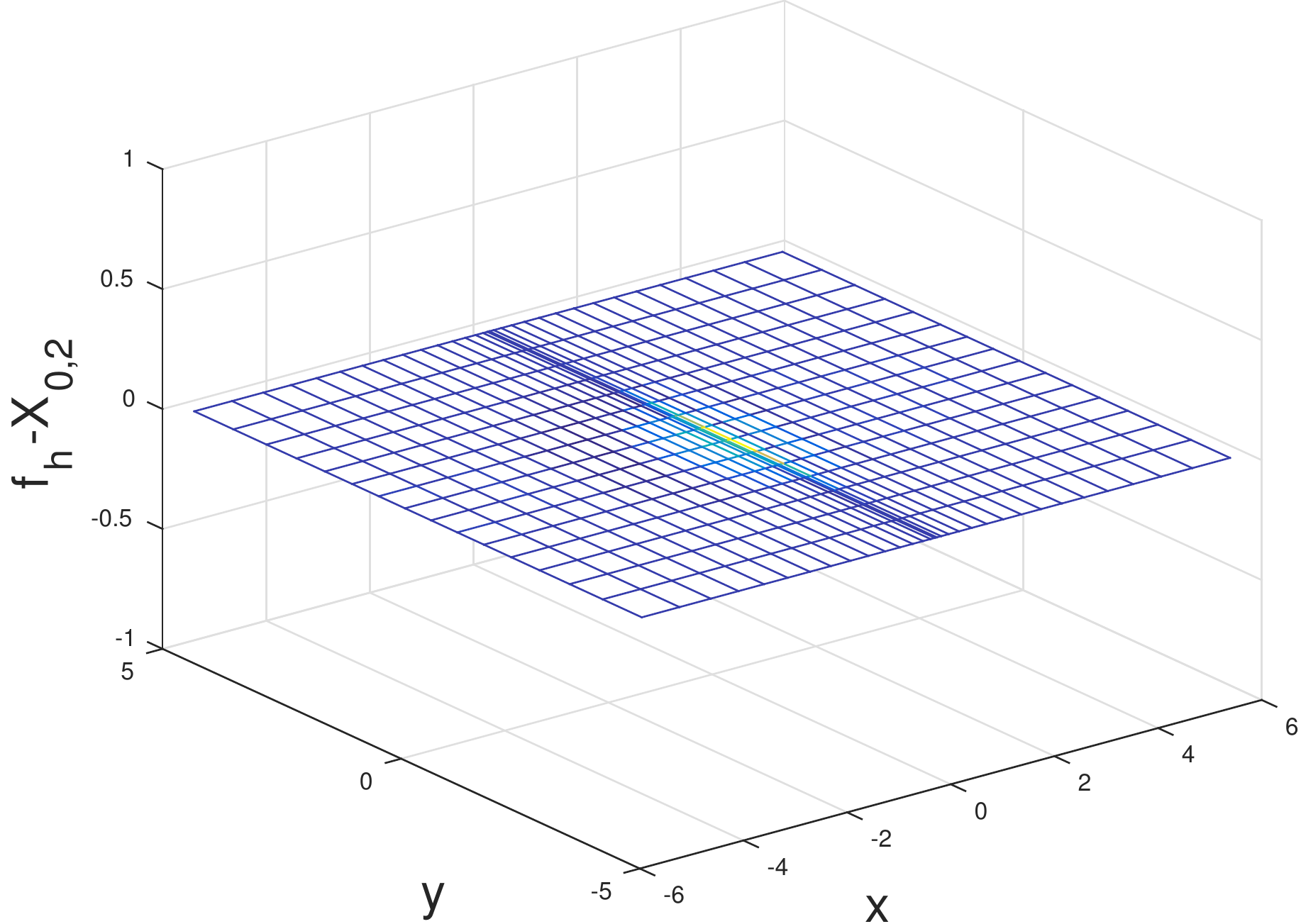}\\
\includegraphics[height = 0.2\textheight,width = 0.3\textwidth]{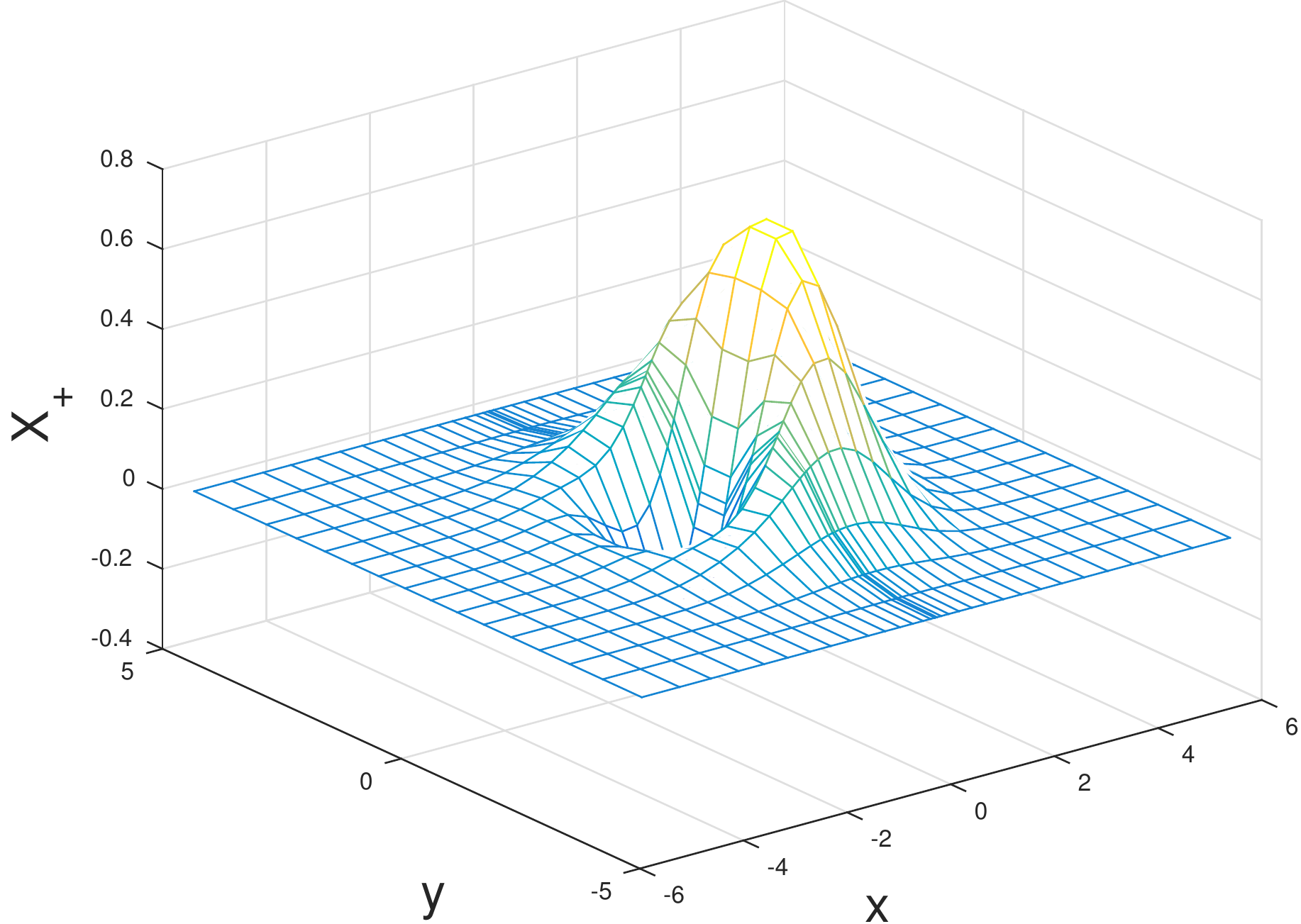}
\includegraphics[height = 0.2\textheight,width = 0.3\textwidth]{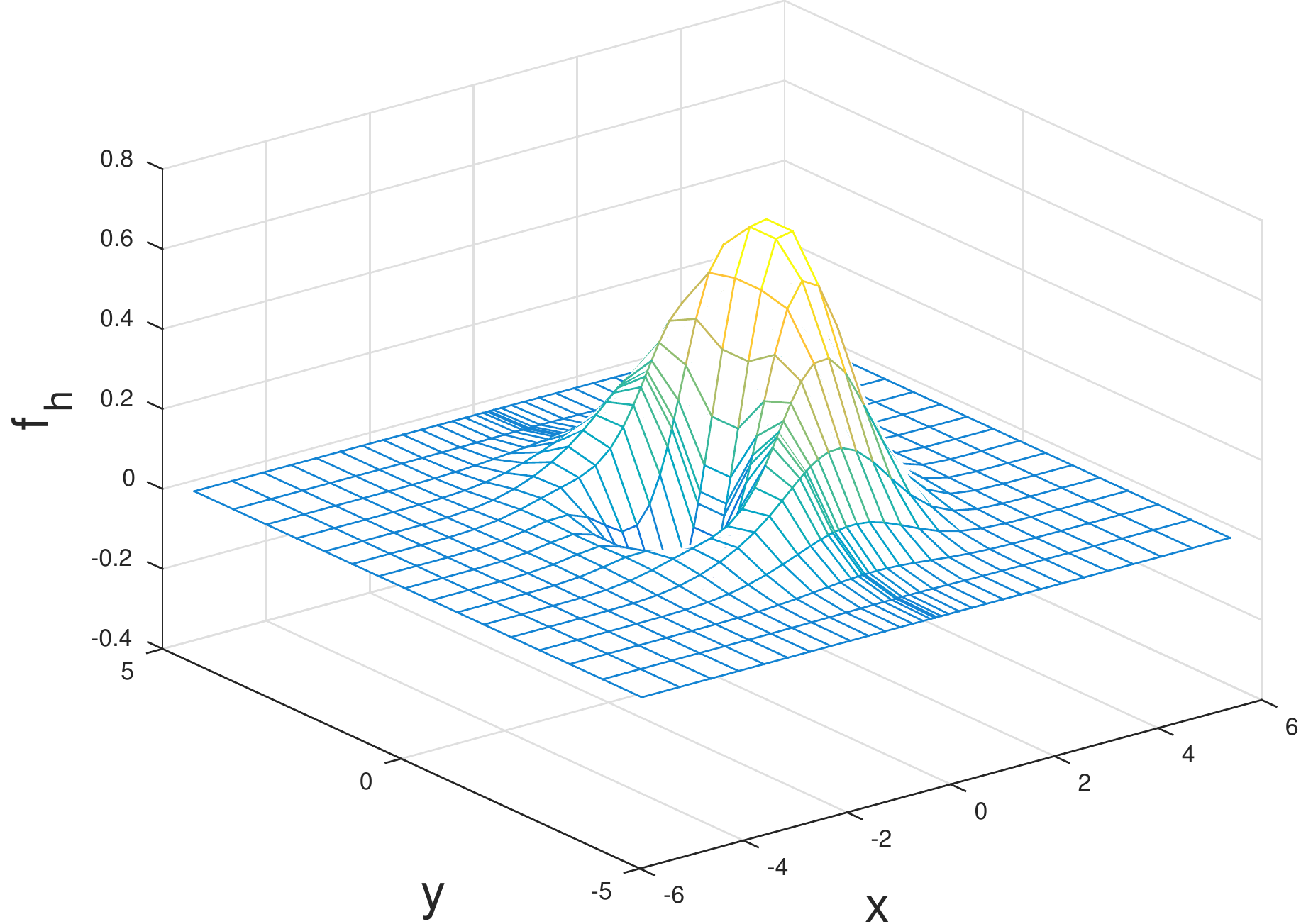}
\includegraphics[height = 0.2\textheight,width = 0.3\textwidth]{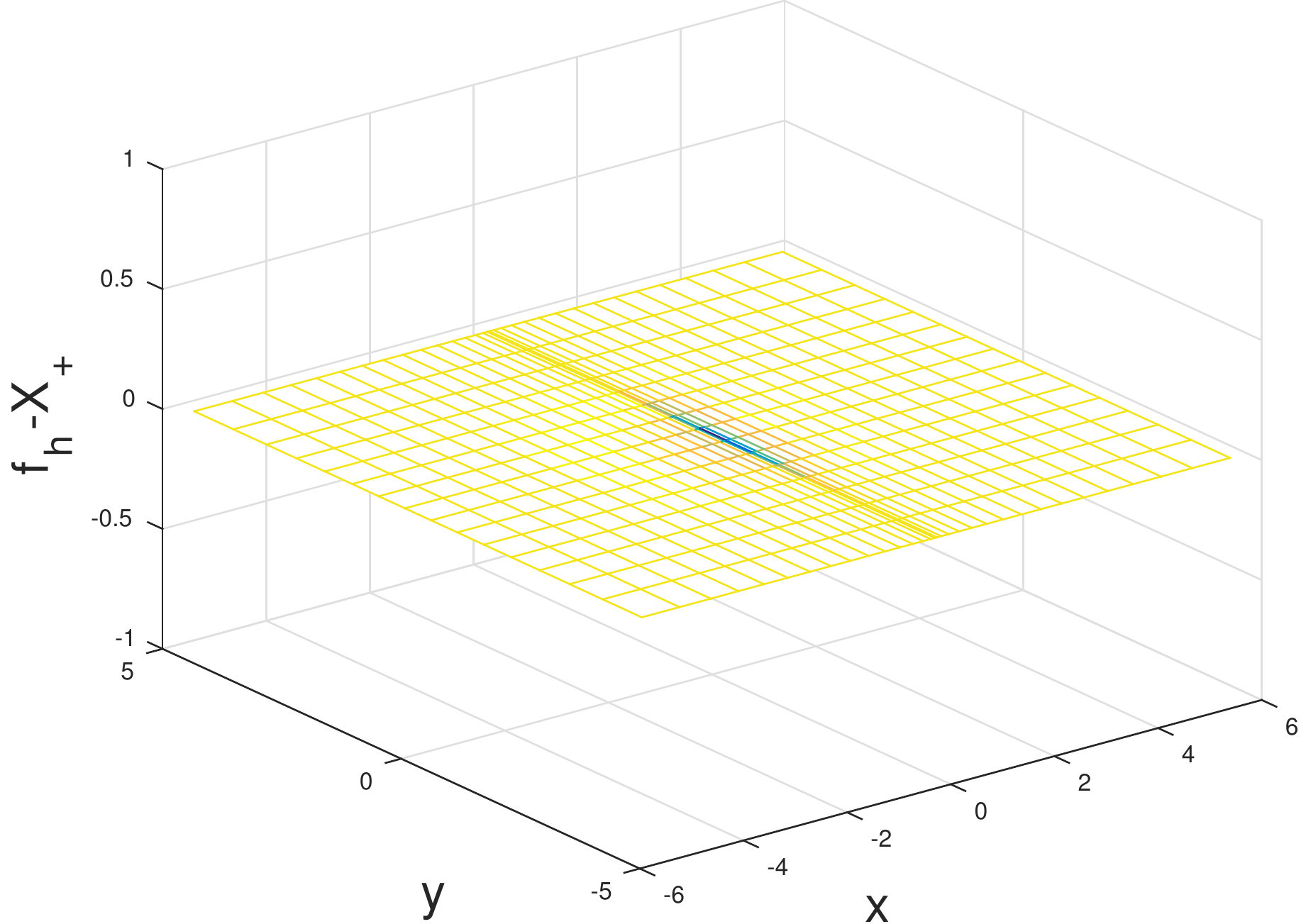}\\
\includegraphics[height = 0.2\textheight,width = 0.3\textwidth]{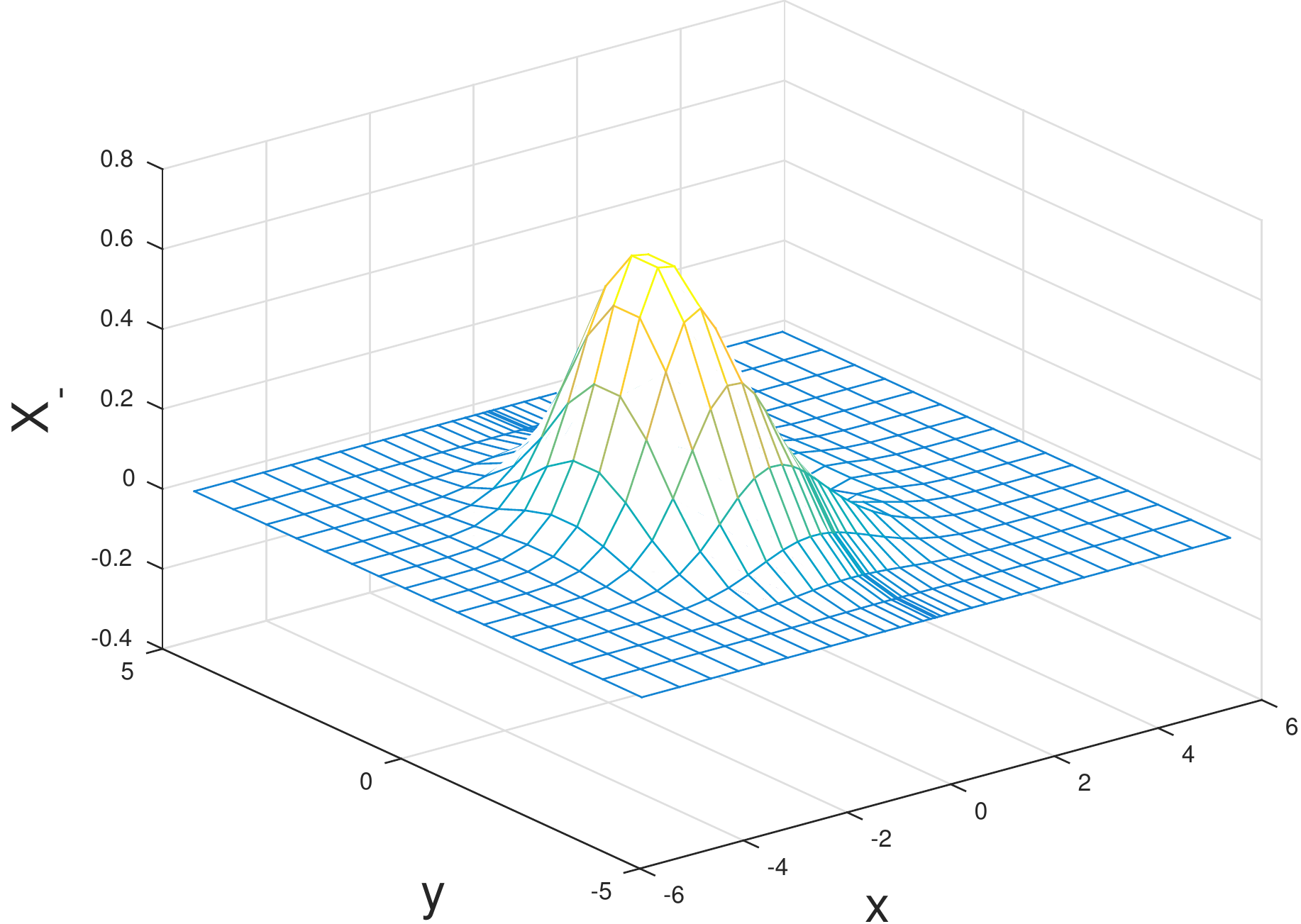}
\includegraphics[height = 0.2\textheight,width = 0.3\textwidth]{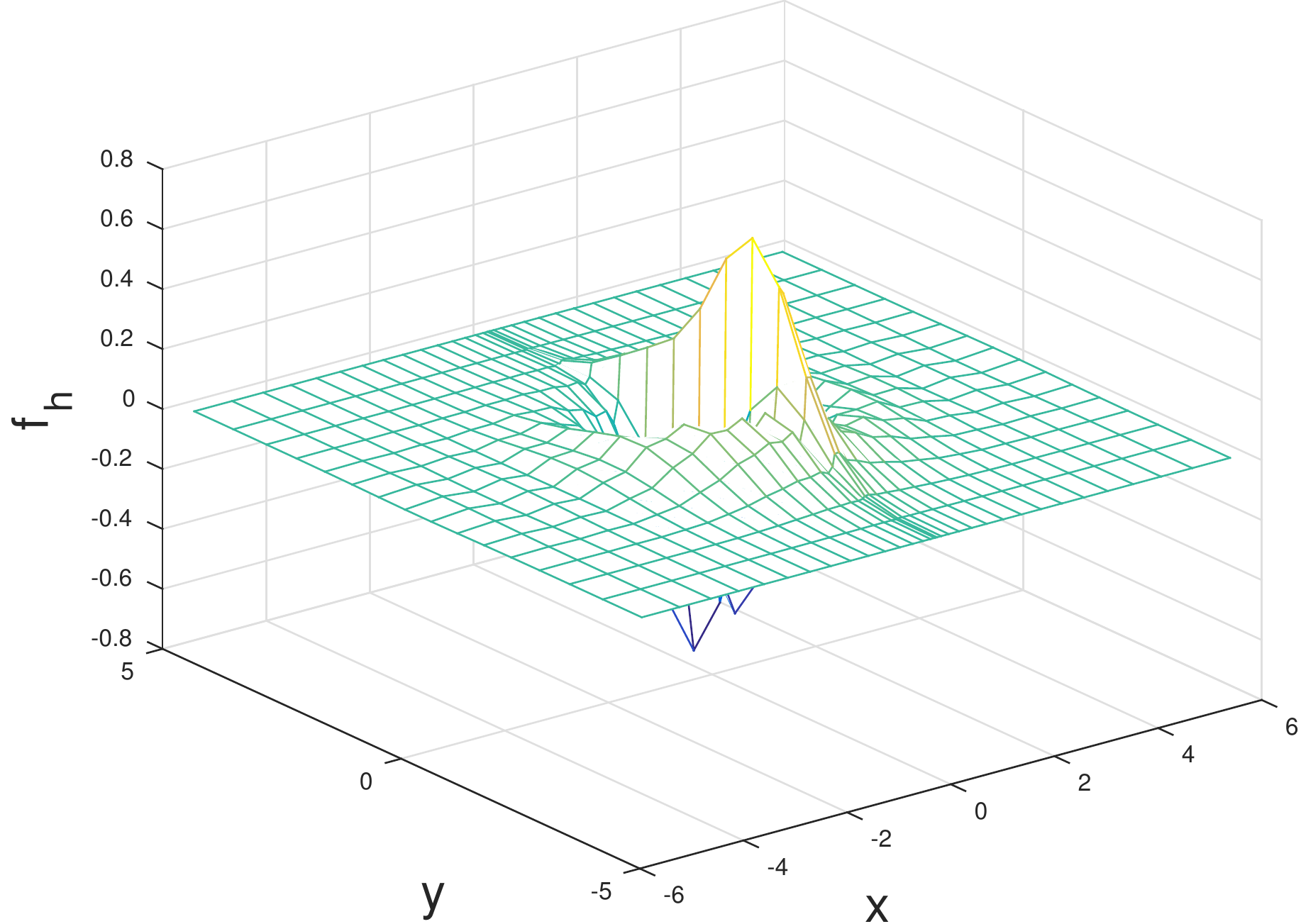}
\includegraphics[height = 0.2\textheight,width = 0.3\textwidth]{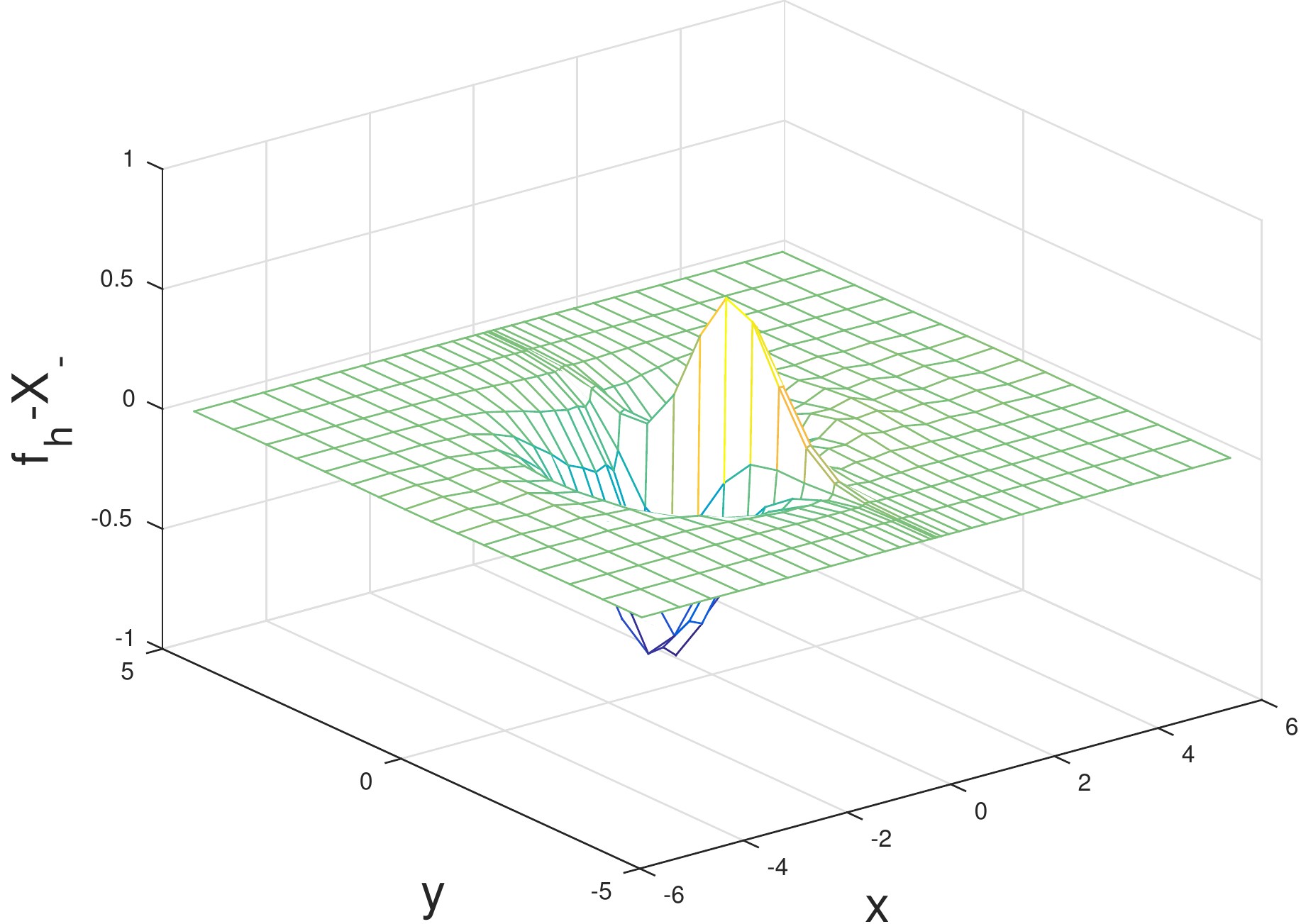}
\caption{(Example 4.2.1) These four rows of figures demonstrate the results computed using $h = X_{0,1}$, $h = X_{0,2}$, $h = X_+$ and $h = X_-$, and the three columns show $h$, recovered solution $f_h$ and the difference $h-f_h$ respectively. For the first three cases, the solutions satisfy that $f_h = h$ since $h \in H^0 \oplus H^+$. 
Solution to the last case does not satisfy $f_h = h$ since $h \in H^-$. In all examples, we use 31 basis functions along each direction.}
\label{fig:MD_Dirichlet_recover}
\end{figure}

\noindent \emph{Example 4.2.2 Maxwell boundary condition.}
In the second example show in Figure~\ref{fig:MD_Maxwell_recover}, we set the accommodation coefficients to be $(\alpha_2, \alpha_3) = (0.3,0.4)$. This time if $h$ is chosen such that
\begin{align*}
    h = X_{0,k} - \CalK \vpran{X_{0, k} \big|_{\mu<0}} \quad (k = 1, 2) 
\qquad \text{or} \qquad
    h = X_{+} - \CalK \vpran{X_{+} \big|_{\mu<0}}  \,,
\end{align*}
then the solution is $f_h = X_{0, k}$ or $X_+$, which again is consistent with the theory. 

\begin{figure}
\centering
\includegraphics[height = 0.18\textheight,width = 0.23\textwidth]{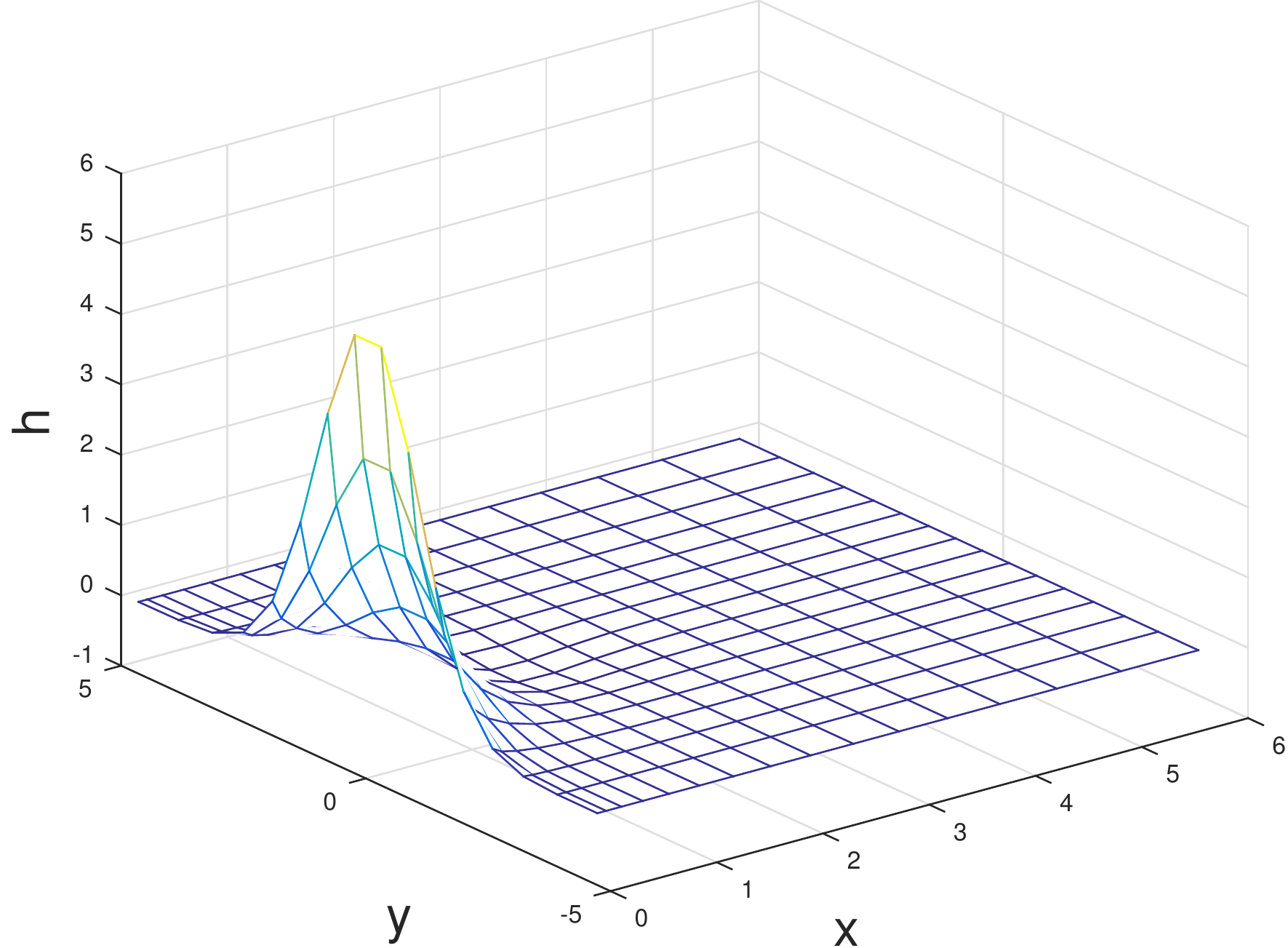}
\includegraphics[height = 0.18\textheight,width = 0.23\textwidth]{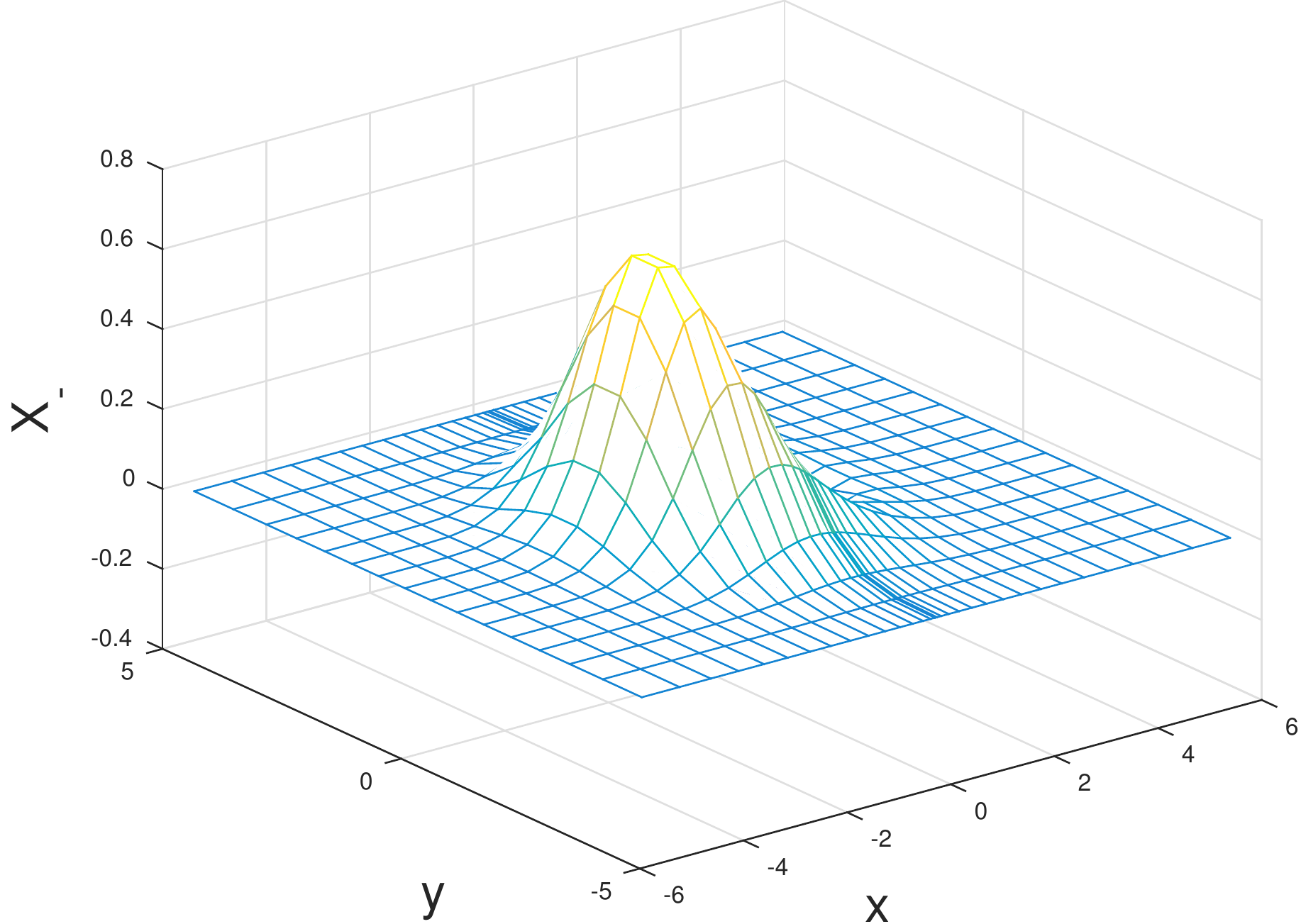}
\includegraphics[height = 0.18\textheight,width = 0.23\textwidth]{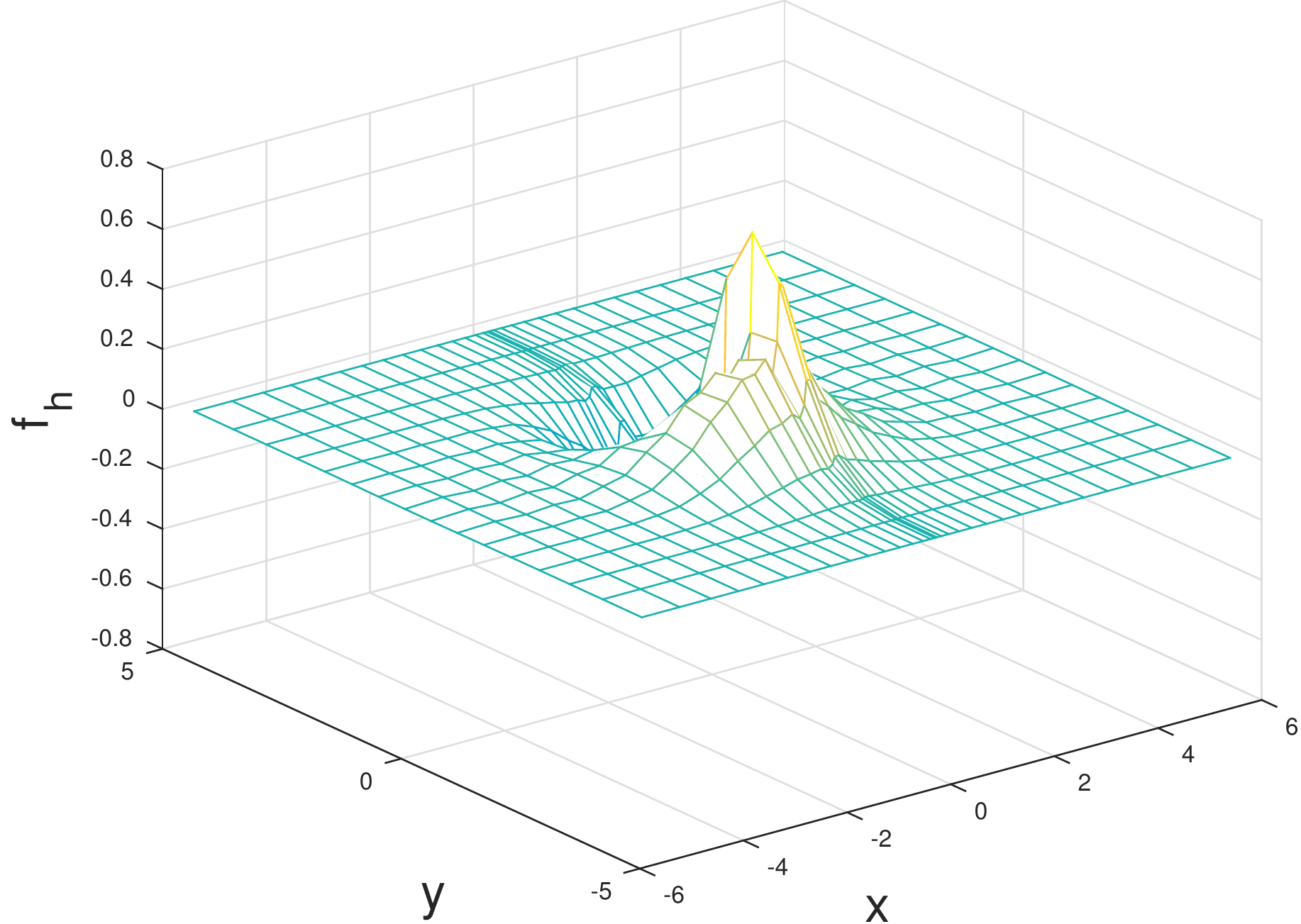}
\includegraphics[height = 0.18\textheight,width = 0.23\textwidth]{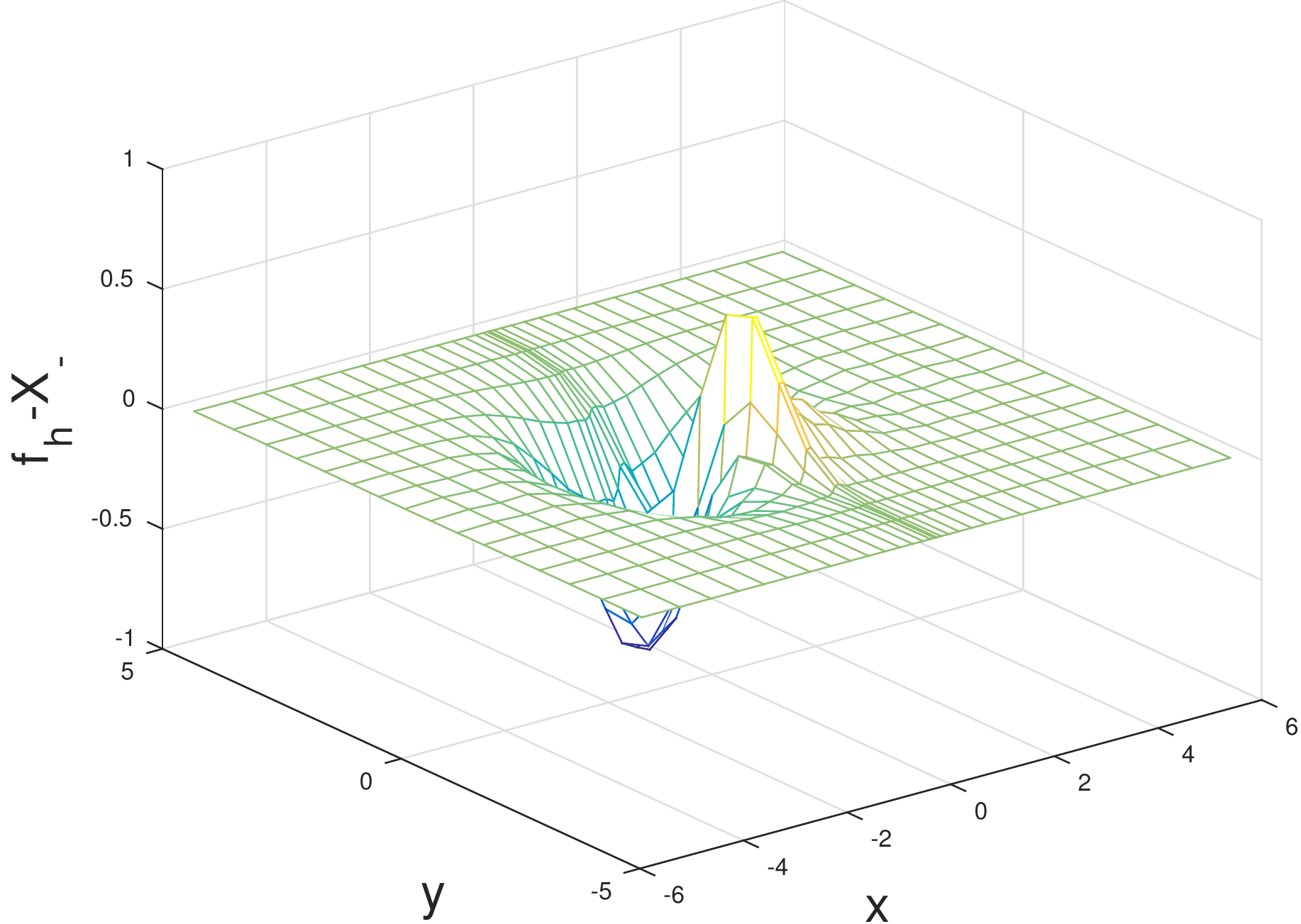}\\
\includegraphics[height = 0.18\textheight,width = 0.23\textwidth]{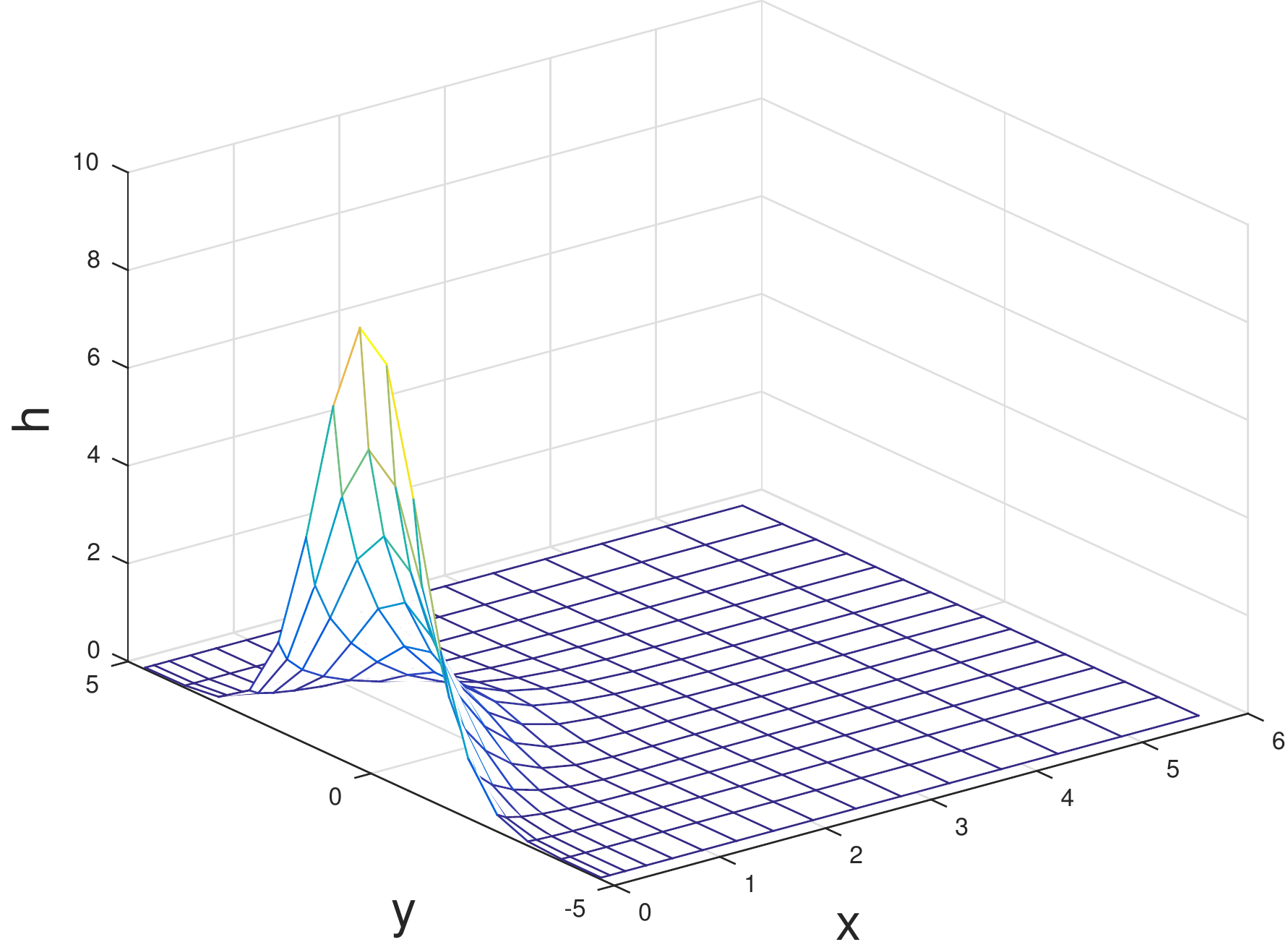}
\includegraphics[height = 0.18\textheight,width = 0.23\textwidth]{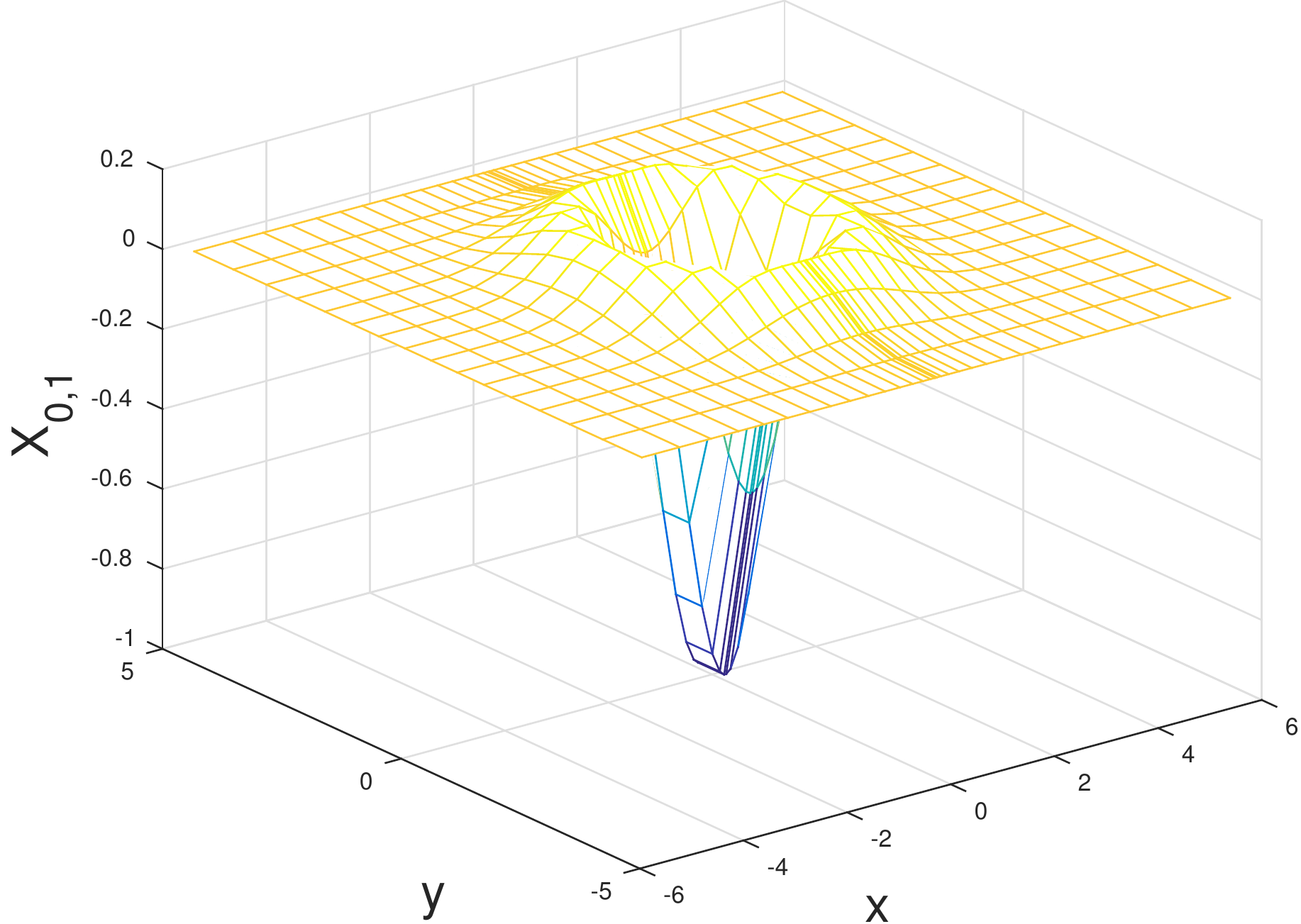}
\includegraphics[height = 0.18\textheight,width = 0.23\textwidth]{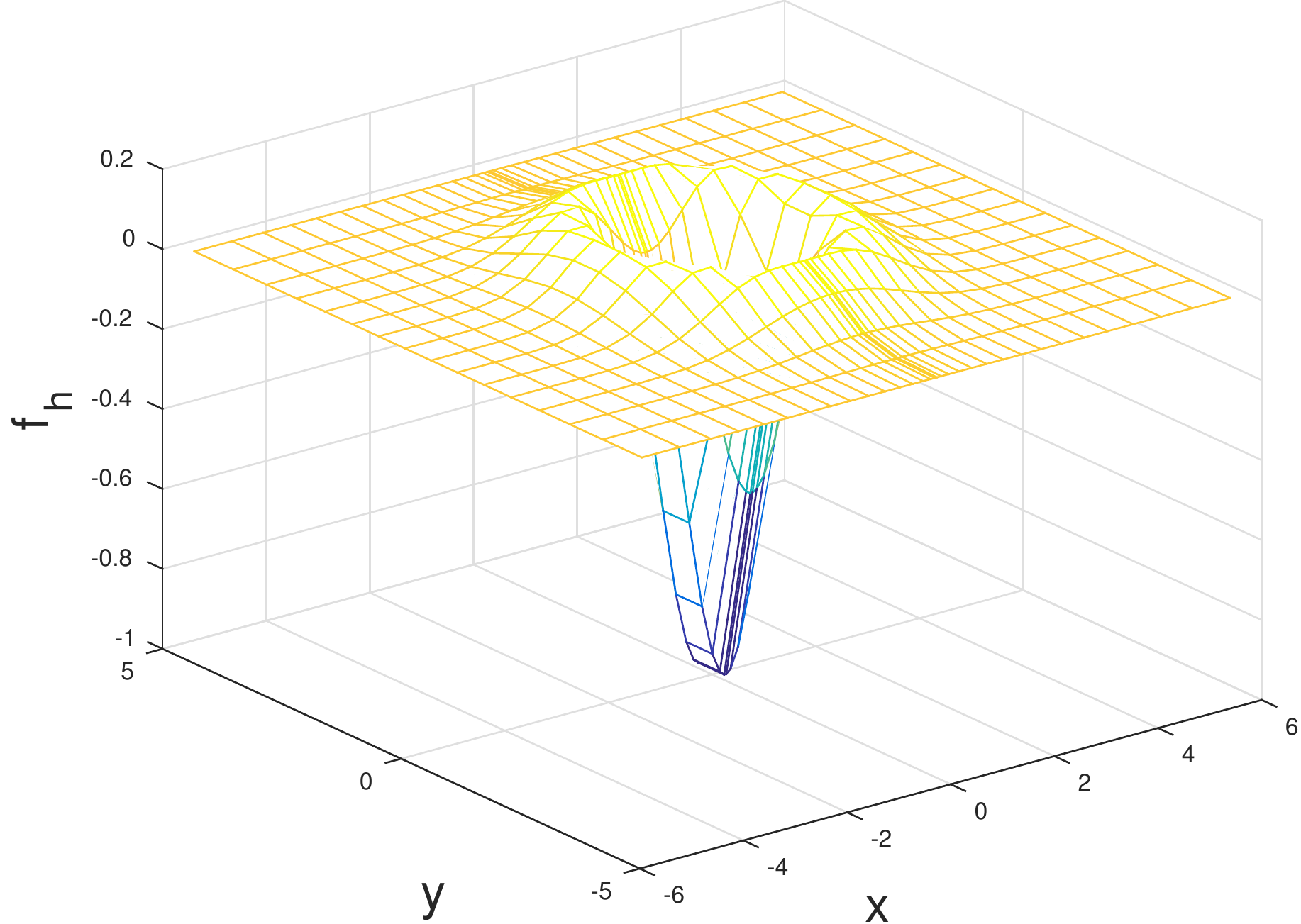}
\includegraphics[height = 0.18\textheight,width = 0.23\textwidth]{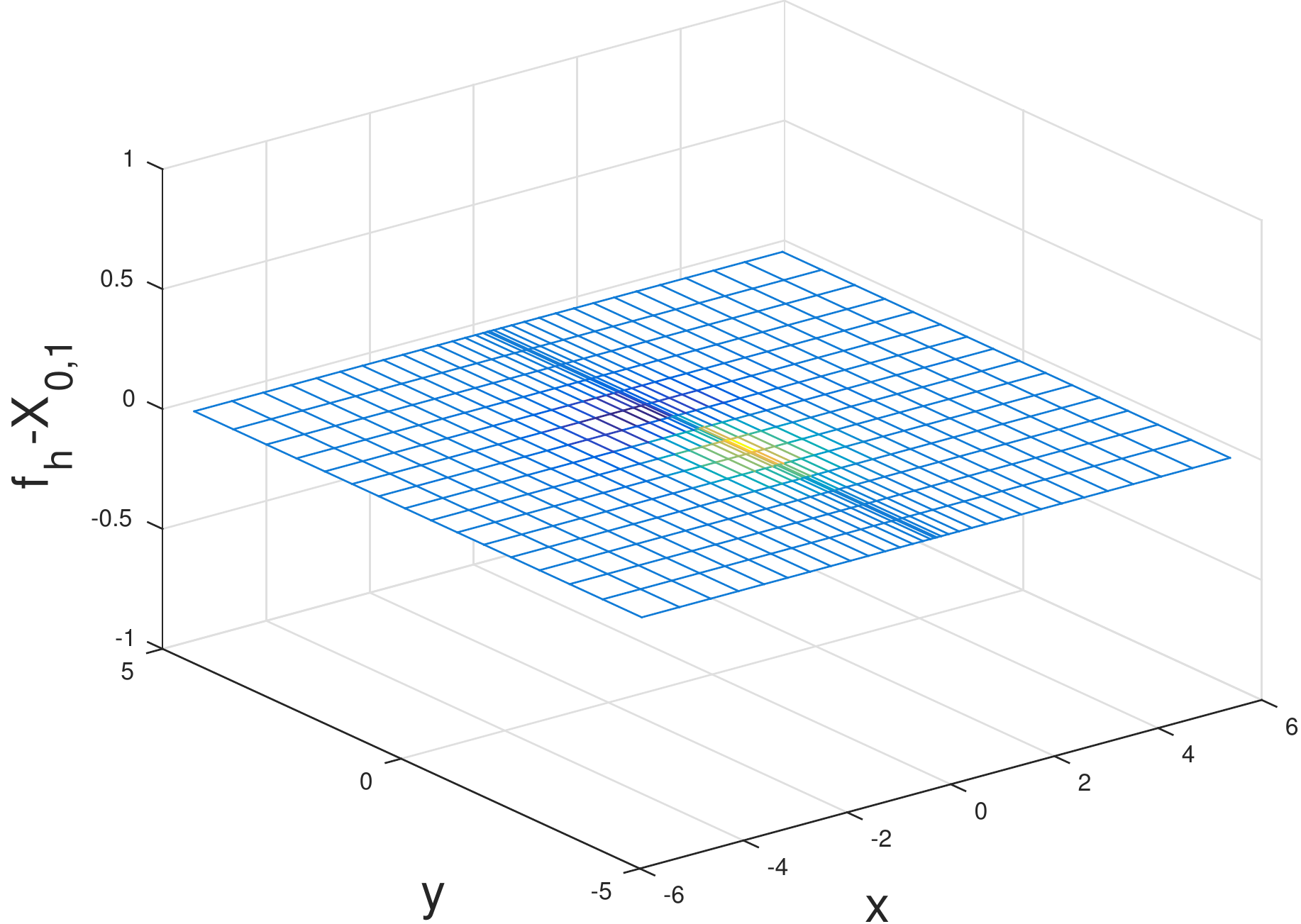}\\
\includegraphics[height = 0.18\textheight,width = 0.23\textwidth]{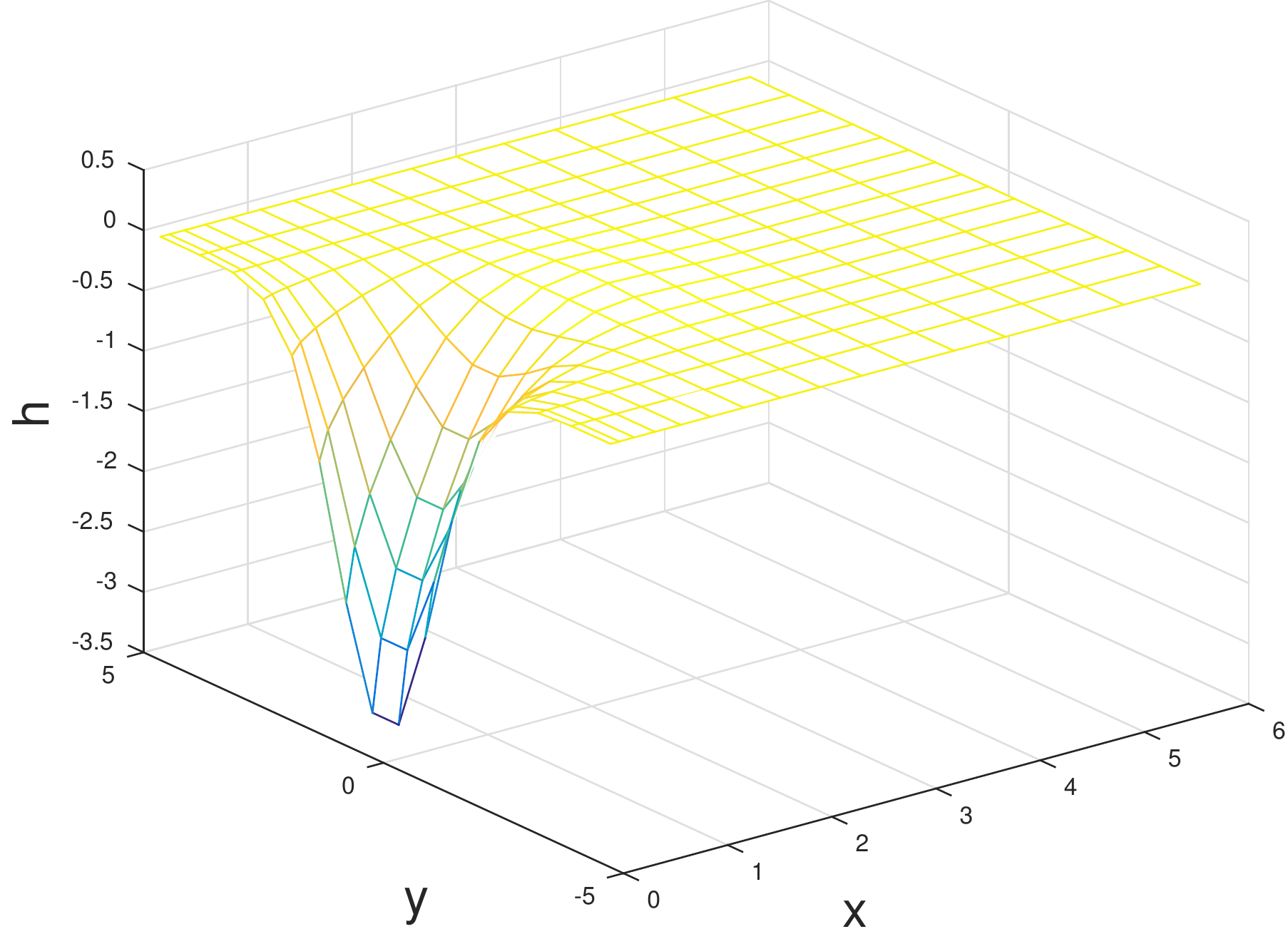}
\includegraphics[height = 0.18\textheight,width = 0.23\textwidth]{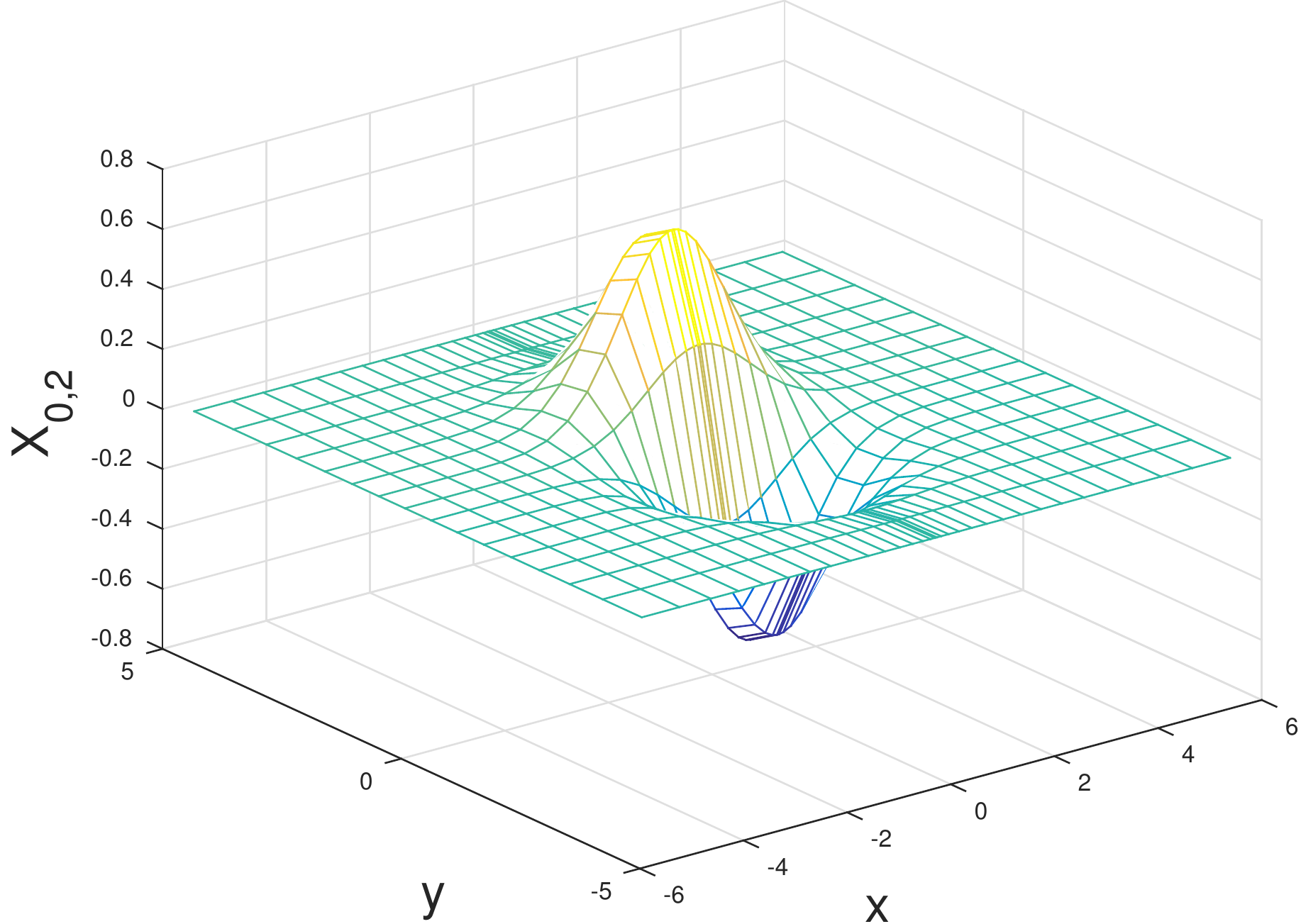}
\includegraphics[height = 0.18\textheight,width = 0.23\textwidth]{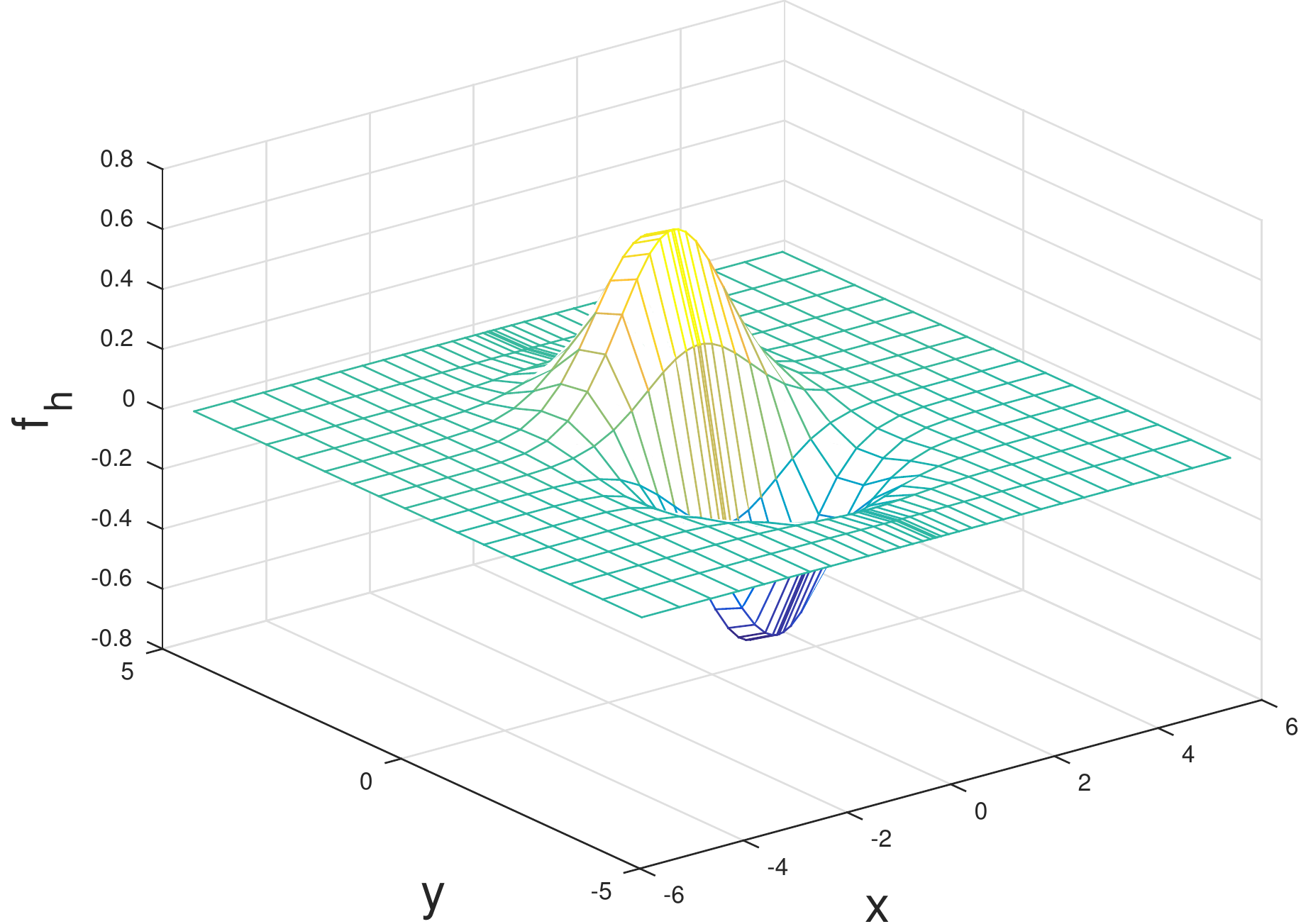}
\includegraphics[height = 0.18\textheight,width = 0.23\textwidth]{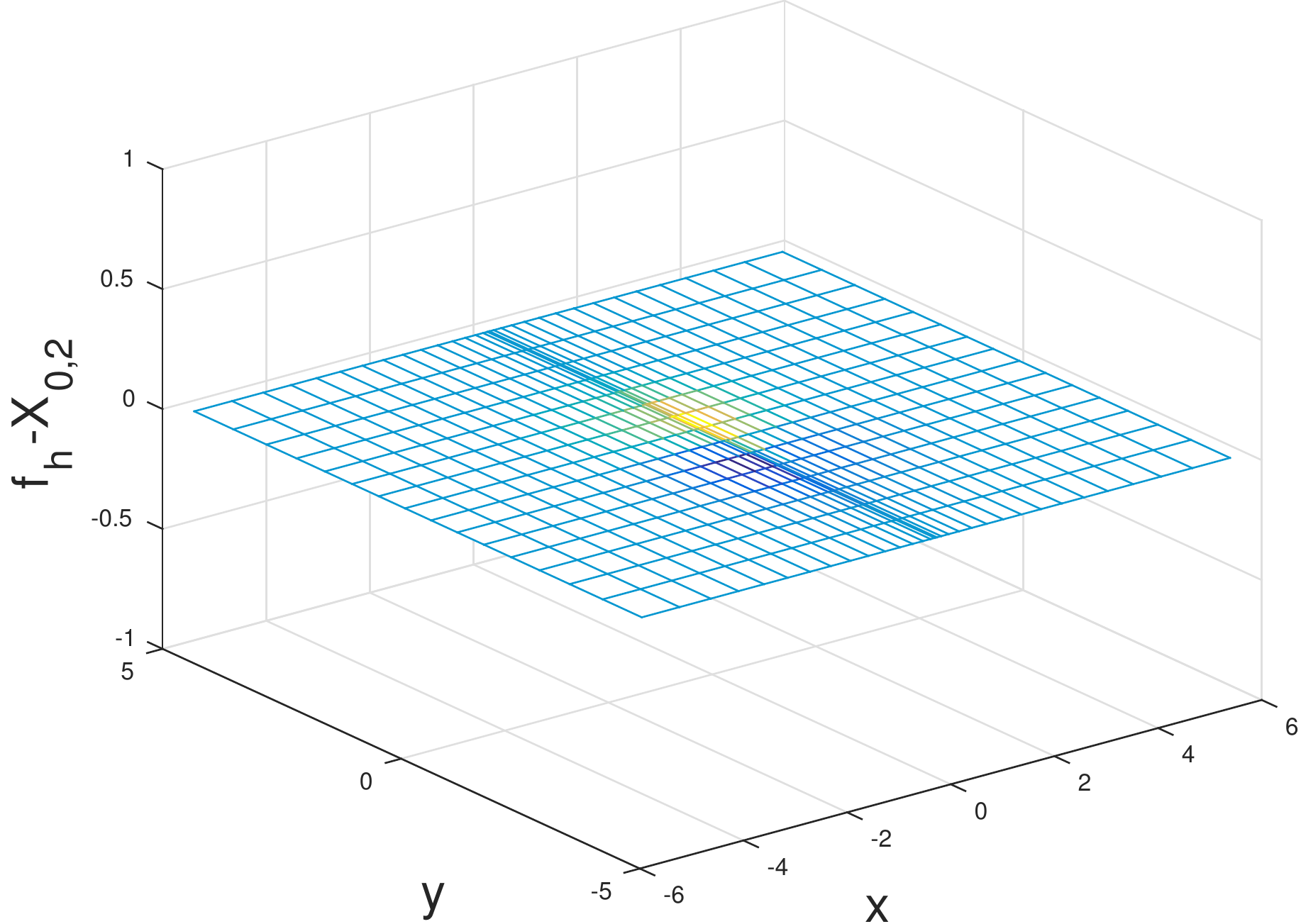}\\
\includegraphics[height = 0.18\textheight,width = 0.23\textwidth]{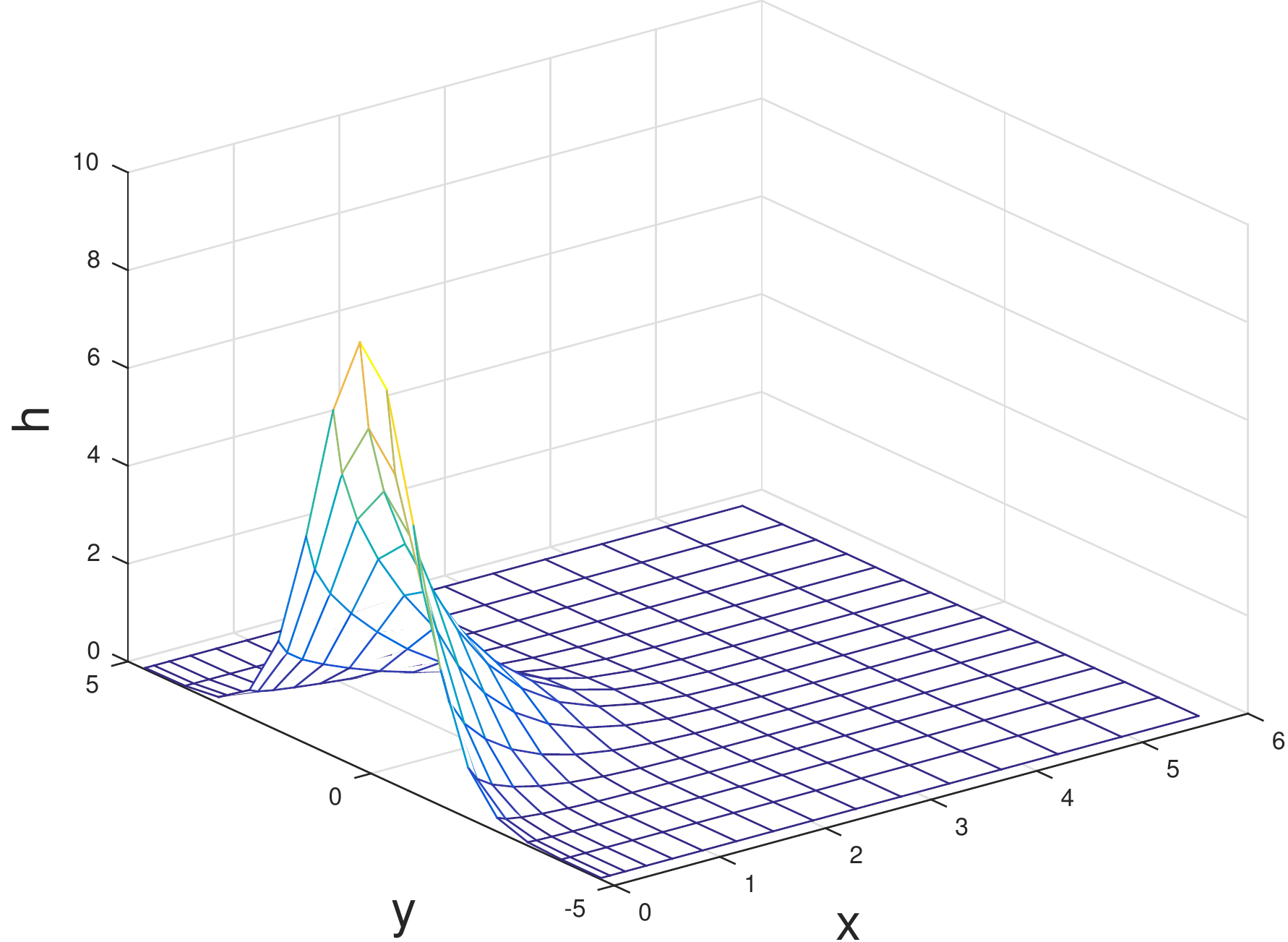}
\includegraphics[height = 0.18\textheight,width = 0.23\textwidth]{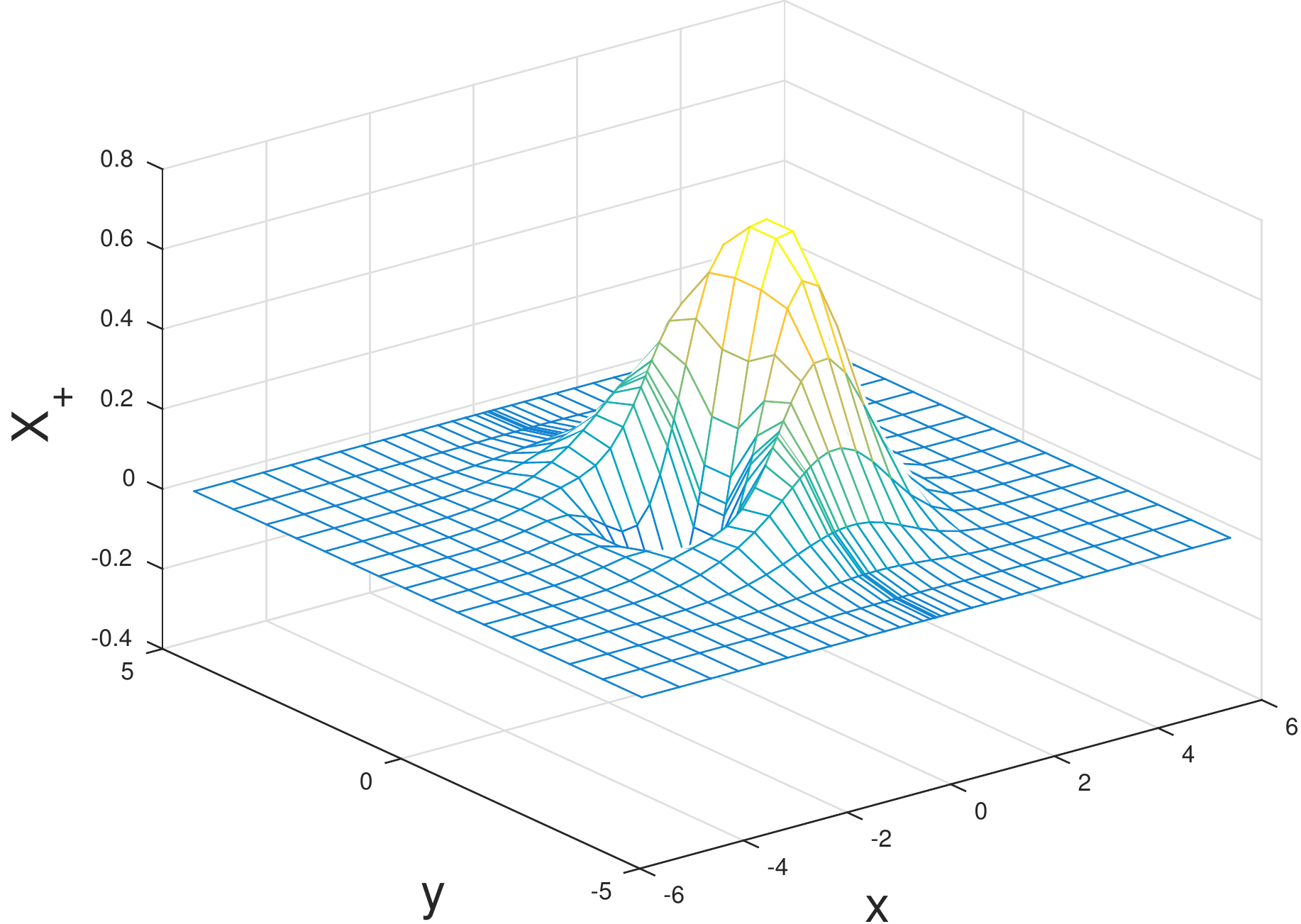}
\includegraphics[height = 0.18\textheight,width = 0.23\textwidth]{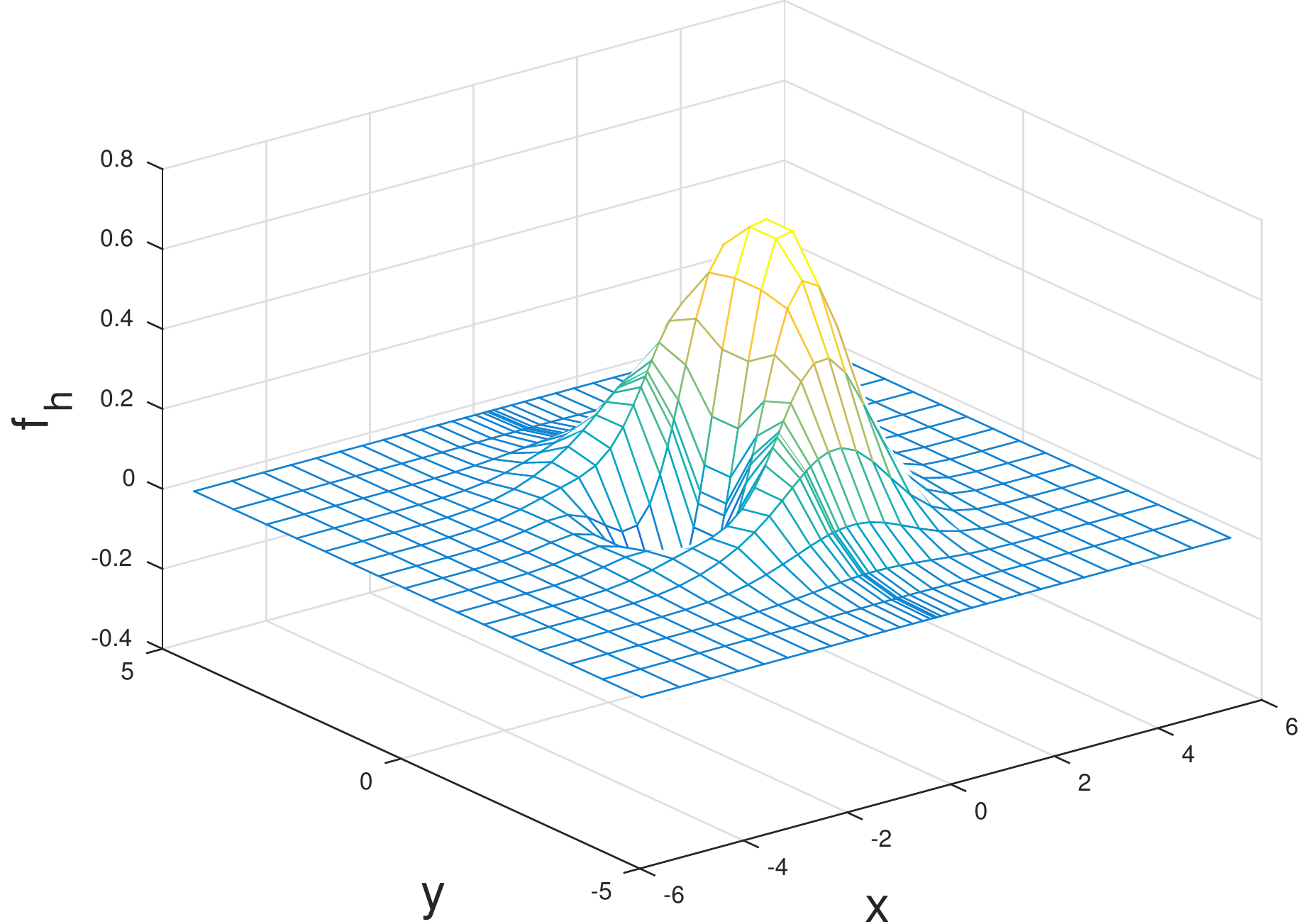}
\includegraphics[height = 0.18\textheight,width = 0.23\textwidth]{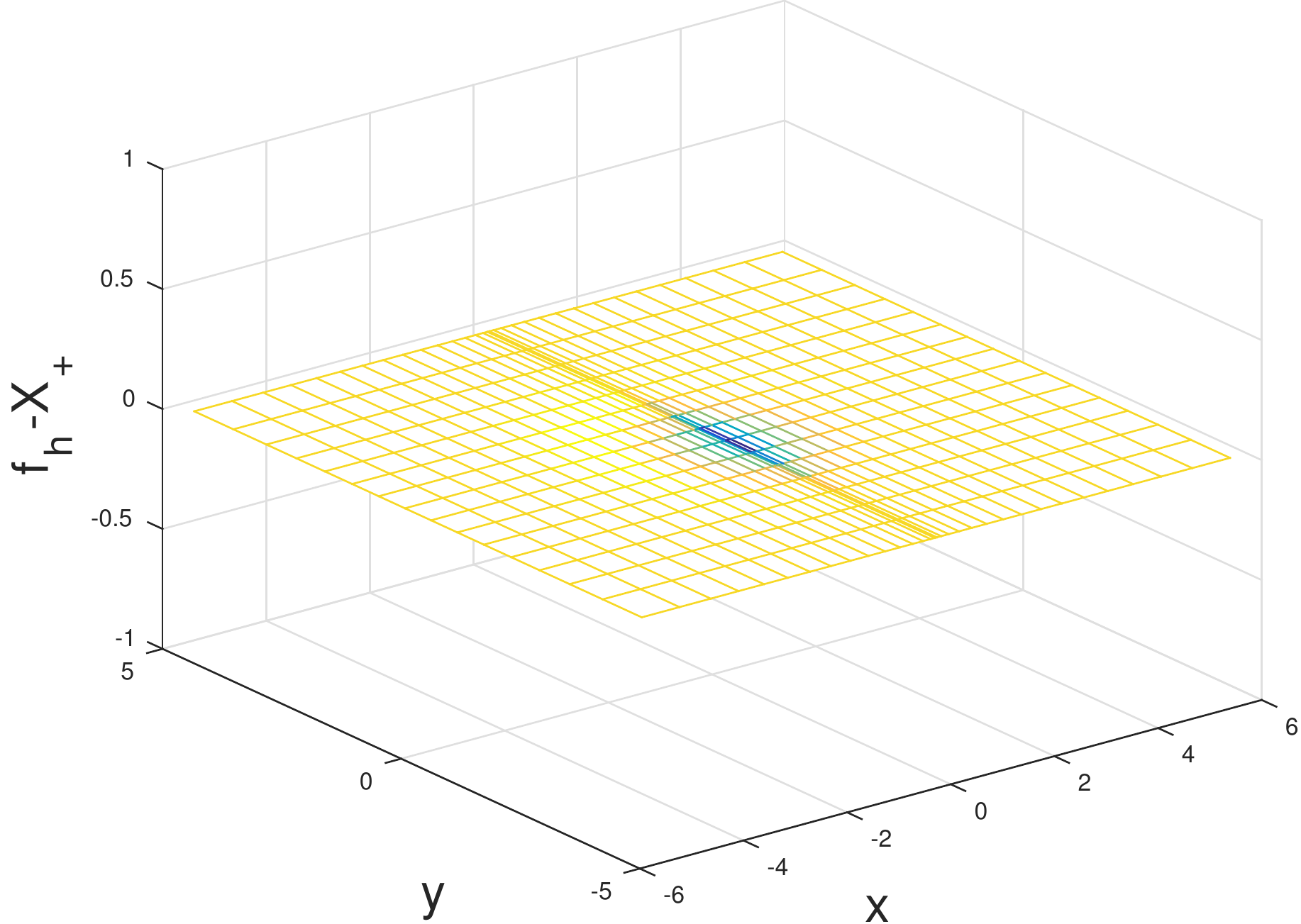}\\
\caption{(Example 4.2.2) These four rows show numerical results using boundary conditions provided by $h = X - \CalK(X \big|_{\mu<0})$ where $X = X_-, X_{0,1}, X_{0,2}, X_+$ respectively. The accommodation coefficients are set as $(\alpha_d, \alpha_s) = (0.3,0.4)$. The four columns (from left to right) are: $h$, $X$, recovered result $f_h$, and the difference $f_h - X$. In the bottom three cases, one recovers $X_{0, 1}, X_{0, 2}, X_+$ as the solutions as expected. In all examples, we use 31 basis functions along each direction.}
\label{fig:MD_Maxwell_recover}
\end{figure}

\subsection{Multi-frequency linearized transport equation} 

\subsubsection{\underline{Formulation}} 
In this third example, we consider a linearized BGK-type of equation
that models phonons with a continuous range of frequencies  \cite{MCMY2011}. We consider the time-independent half-space equation. Let $\omega \in (0,
\omega_m)$ be the angular frequency of phonons and $F$ be the density
function such that $F = F(x, \mu, \omega)$.

The stationary nonlinear equation has the form
\begin{align} \label{eq:phonon-1}
   \mu v(\omega) \del_x F = - \frac{F - F_{BE}}{\tau(\omega)} \,,
\end{align}
where $v(\omega)$ is the group velocity and $\tau(\omega)$ is the relaxation time. They are both frequency-dependent. We assume that $v(\omega) > 0$, although it may not have a positive lower bound. The equilibrium state $F_{BE}$ is given as the Bose-Einstein distribution function such that
\begin{align} \label{def:equil-BE}
   F_{BE}(\omega, T) = \frac{1}{e^{\hbar\omega/(k_B T)} - 1} \,,
\end{align}
where $\hbar$ is the reduced Planck constant, $k_B$ is the Boltzmann constant, and $T$ is the temperature.  Given a reference temperature $T_0$, we linearized $F(\omega, T)$ around $F(\omega, T_0)$ such that
\begin{align}\label{eq:linearF}
    F(\omega, T) = F(\omega, T_0) + G(\omega, T) \,.
\end{align}
The resulting equation has the form
\begin{align} \label{eq:phonon-2}
    \mu v(\omega) \del_x G = - \frac{G - C_\omega \Delta T}{\tau(\omega)} \,,
\end{align}
where $C_\omega = \frac{e^{\hbar\omega/(k_B T_0)}}{(e^{\hbar\omega/(k_B T_0)} - 1)^2} \frac{\hbar\omega}{k_BT_0^2}$. 
By the mass conservation, we have
\begin{align*}
    \int_0^{\omega_m} \int_{-1}^1 \frac{G}{\tau(\omega)} \dmu \domega
 = \vpran{\int_0^{\omega_m} \int_{-1}^1 \frac{C_\omega}{\tau(\omega)} \dmu\domega}
     \Delta T \,,
\end{align*}
which gives
\begin{align*}
  \Delta T
  = \frac{1}{\Theta_0} \int_0^{\omega_m} \int_{-1}^1 \frac{G}{\tau(\omega)} \dmu \domega
\qquad \text{with} \quad
   \Theta_0 = \int_0^{\omega_m} \int_{-1}^1 \frac{C_\omega}{\tau(\omega)} \dmu\domega \,.
\end{align*}
Equation~\eqref{eq:phonon-2} then has the form
\begin{align} \label{eq:phonon-3}
    \mu v(\omega) \del_x G 
= - \frac{1}{\tau(\omega)}
   \vpran{G - \frac{C_\omega}{\Theta_0}
                     \int_0^{\omega_m} \int_{-1}^1 \frac{G}{\tau(\omega)} \dmu \domega} \,.
\end{align}
The operator on the right-hand side of~\eqref{eq:phonon-3} is not self-adjoint in the space $L^2(\dmu \domega)$. However, we show below that it is symmetrizable. Indeed, if we define 
\begin{align} \label{linearization-2}
   \beta_\omega = v(\omega) \tau(\omega) \,,
\qquad
   G =  \frac{C_\omega}{\Theta_0\sqrt{\beta_\omega}} \ f \,,
\end{align}
then $f$ satisfies
\begin{align} \label{eq:phonon-4}
    \mu \del_x f 
    = - \frac{1}{\beta_\omega} f
       + \frac{1}{\sqrt{\beta_\omega}}\int_0^{\omega_m} 
          \int_{-1}^1 \frac{f}{\sqrt{\beta_\omega}} \dsigma
    \,\Denote\,
      - \frac{1}{\sqrt{\beta_\omega}} \vpran{\frac{f}{\sqrt{\beta_\omega}}
       - \vint{\frac{f}{\sqrt{\beta_\omega}}}_{\mu, \omega}}
   \Denote - \CalL(f) \,,
\end{align}
where $\dsigma = \frac{C_\omega}{\Theta_0 \tau(\omega)} \dmu \domega$ is a probability measure by the definition of $\Theta_0$. Note that the linear operator $\CalL$ on the right-hand side of~\eqref{eq:phonon-4} is now self-adjoint.

In the previous two examples, both linear operators $\CalL$ satisfy the classical ``coercivity'' condition~\eqref{property:coer-L}.
However, this property ceases to hold in the current example. 
Nevertheless, we show that with the help of the added damping terms, condition (\PL5) is still true. 
\begin{lem} \label{thm:damped-2}
Suppose $0 < \omega < \omega_m$, $0 < \beta_\omega \leq b_0$, and $f = f(\mu, \omega)$. Let $\CalL$ be the scattering operator given by 
\begin{align} \label{def:L-multi-freq}
    \CalL f = \frac{1}{\beta_\omega} f 
                 - \frac{1}{\sqrt{\beta_\omega}} 
                     \int_0^{\omega_m} \int_{-1}^1 \frac{f}{\sqrt{\beta_\omega}} \dsigma \,,
\end{align}
where $\dsigma = \frac{C_\omega}{\Theta_0 \tau(\omega)} \dmu \domega$ is a probability measure in $(\mu, \omega)$. 
Then 
\begin{itemize}
\item[(a)] $\CalL$ is self-adjoint, nonnegative, and $\NullL = \Span\{\sqrt{\beta_\omega}\}$. Moreover, $\sqrt{\beta_\omega} \in H^0$.

\item[(b)] Denote $X_0 = \frac{\sqrt{\beta_\omega}}{\sqrt{\int_\VV \beta_\omega \dsigma}}$. Define
\begin{align} \label{def:L-d-2}
  \CalL_d f 
 =\CalL f + \alpha \vpran{ 
           \mu X_{0} \viint{\mu X_{0}}{f}_{\mu, \omega}                            
       +   \mu \CalL^{-1}(\mu X_{0})
             \viint{\mu \CalL^{-1}(\mu X_{0})}{f}_{\mu, \omega}} \,.
\end{align}
where $\viint{g_1}{g_2}_{\mu, \omega} = \int_0^{\omega_m} \int_{-1}^1 g_1 g_2 \dsigma$. Then for $\alpha > 0$ small enough, there exists $\sigma_2$ (depending on $\alpha$) such that
\begin{align} \label{bound:L-d-multi-freq}
     \int_{\VV} f \CalL_d f \dsigma
\geq \sigma_2 \norm{f}_{L^2(\dsigma)}^2 \,.
\end{align}
Hence $\CalL$ satisfies all the assumptions (\PL1)-(\PL5).
\end{itemize}
\end{lem}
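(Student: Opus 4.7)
My plan for part (a) is direct computation. The bilinear form
\[
\int (\CalL f)\, g \, \dsigma = \int \frac{fg}{\beta_\omega} \dsigma - \int \frac{f}{\sqrt{\beta_\omega}} \dsigma \cdot \int \frac{g}{\sqrt{\beta_\omega}} \dsigma
\]
is manifestly symmetric in $(f,g)$, giving self-adjointness. Nonnegativity and the identification $\NullL = \Span\{\sqrt{\beta_\omega}\}$ both follow from a single application of Cauchy--Schwarz,
\[
\Bigl(\int \frac{f}{\sqrt{\beta_\omega}} \dsigma\Bigr)^2 \le \int \frac{f^2}{\beta_\omega} \dsigma \cdot \int 1 \, \dsigma = \int \frac{f^2}{\beta_\omega} \dsigma,
\]
where the last equality uses that $\sigma$ is a probability measure; equality holds iff $f/\sqrt{\beta_\omega}$ is a.e.\ constant. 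Finally, $\sqrt{\beta_\omega} \in H^0$ because $X_0$ depends only on $\omega$, so $\mu X_0$ is odd in $\mu$ and the $\mu$-symmetry of $\dsigma$ forces $\CalP_1 X_0 = \CalP(\mu X_0) = 0$.

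For part (b), the plan is to reduce to the general implication already established inside the proof of Lemma \ref{lem:assump-L-1}: namely, (\PL1)--(\PL3) together with the partial coercivity \eqref{property:coer-L} (taken with $a \equiv 1$ here) imply (\PL5) for $\alpha$ small enough. Conditions (\PL1)--(\PL3) follow at once from (a) and the decomposition $\CalL = \frac{1}{\beta_\omega}\, \I - R$, where $R$ is a bounded rank-one operator on $L^2(\dsigma)$: the attenuation function may be taken as $a \equiv 1$, consistent with the earlier remark that $\dsigma_1 = \dsigma$ for the operator in \eqref{def:L-multi-freq}, and $\sqrt{\beta_\omega} \in L^p(\dsigma)$ for all $p \in [1,\infty)$ because $\beta_\omega \le b_0$ and $\sigma$ is a probability measure.

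The one non-routine ingredient is \eqref{property:coer-L}. For $f^\perp \in (\NullL)^\perp$, i.e.\ $\int f^\perp \sqrt{\beta_\omega} \, \dsigma = 0$, the quadratic form equals $\int (f^\perp)^2/\beta_\omega \, \dsigma - \bigl(\int f^\perp/\sqrt{\beta_\omega} \, \dsigma\bigr)^2$, and the task is to absorb the subtracted square into a proper fraction of the first term. The trick exploits the orthogonality constraint: for any $\lambda \in \R$,
\[
\int \frac{f^\perp}{\sqrt{\beta_\omega}} \dsigma = \int f^\perp \Bigl(\frac{1}{\sqrt{\beta_\omega}} - \lambda \sqrt{\beta_\omega}\Bigr) \dsigma,
\]
so weighted Cauchy--Schwarz with weight $\beta_\omega^{-1}$ followed by minimization over $\lambda$ yields $\bigl(\int f^\perp/\sqrt{\beta_\omega} \, \dsigma\bigr)^2 \le (1-\kappa) \int (f^\perp)^2/\beta_\omega \, \dsigma$ with $\kappa := \bigl(\int \beta_\omega \dsigma\bigr)^2 / \int \beta_\omega^2 \dsigma \in (0,1]$. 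Combined with the upper bound $\beta_\omega \le b_0$, this gives $\int f^\perp \CalL f^\perp \dsigma \ge (\kappa/b_0) \|f^\perp\|_{L^2(\dsigma)}^2$, i.e.\ \eqref{property:coer-L} with $c_0 = \kappa/b_0 > 0$; the argument of Lemma \ref{lem:assump-L-1} then closes out to produce (\PL5).

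I expect \eqref{property:coer-L} to be the main obstacle. Unlike the operators treated earlier in the paper, $\CalL$ here is not a compact perturbation of a multiple of the identity, and $\beta_\omega$ is allowed to vanish, so neither the classical compactness/spectral-gap argument nor a pointwise lower bound on $1/\beta_\omega$ is available. The $\lambda$-shift Cauchy--Schwarz above is tailored to extract the needed spectral gap using only the orthogonality $f^\perp \perp \sqrt{\beta_\omega}$, the probabilistic nature of $\dsigma$, and the finite upper bound $b_0$.
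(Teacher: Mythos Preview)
Your treatment of part~(a) is essentially the paper's: both reduce to the identity $\int f \CalL f \dsigma = \int \bigl(f/\sqrt{\beta_\omega} - \langle f/\sqrt{\beta_\omega}\rangle\bigr)^2 \dsigma$. For part~(b), however, you take a genuinely different route. The paper does \emph{not} go through \eqref{property:coer-L}; indeed the text immediately preceding the lemma asserts that ``this property ceases to hold in the current example''. Instead the authors compute $\CalL^{-1}(\mu X_0) = \mu \beta_\omega^{3/2}/\sqrt{\int \beta_\omega \dsigma}$ explicitly, split $\langle \mu \CalL^{-1}(\mu X_0), f\rangle$ into a term proportional to $\langle f/\sqrt{\beta_\omega}\rangle$ plus a remainder controlled by $\langle f, \CalL f\rangle$, and assemble $\int f \CalL_d f \geq \tilde\alpha \|f/\sqrt{\beta_\omega}\|^2 \geq \sigma_2 \|f\|^2$ directly for small $\alpha$. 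Your $\lambda$-shifted Cauchy--Schwarz establishing \eqref{property:coer-L} is correct---so the paper's contrary assertion appears to be a misstatement---and reducing to the general mechanism of Lemma~\ref{lem:assump-L-1} is tidier in that it avoids the explicit inversion of $\CalL$; the paper's direct argument, by contrast, is self-contained and does not appeal back to that earlier lemma. One small caveat on your side: taking $a \equiv 1$ in (\PL2) asks that $\CalL$ be bounded on $L^2(\dsigma)$, which requires $\inf_\omega \beta_\omega > 0$ and is not assumed. This does not affect the coercivity argument itself (the proof of Lemma~\ref{lem:assump-L-1} only uses \eqref{property:coer-L} together with $\mu X_0,\, \mu \CalL^{-1}(\mu X_0) \in L^2$, both available here), but the final clause ``$\CalL$ satisfies (\PL1)--(\PL5)'' would strictly need that extra hypothesis---a gap the paper's own proof shares.
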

\begin{proof}
By the definition of $\CalL$, we have
\begin{align} \label{bound:L-multi-freq}
   \int_0^{\omega_m} \int_{-1}^1 f \CalL f \dsigma
&= \int_0^{\omega_m} \int_{-1}^1 
      \frac{f}{\sqrt{\beta_\omega}}
      \vpran{\frac{f}{\sqrt{\beta_\omega}} 
                 - \vint{ \frac{f}{\sqrt{\beta_\omega}}}_{\mu, \omega}} \dsigma \nn
\\
&= \int_0^{\omega_m} \int_{-1}^1 
      \vpran{\frac{f}{\sqrt{\beta_\omega}} 
                 - \vint{ \frac{f}{\sqrt{\beta_\omega}}}_{\mu, \omega}}^2 \dsigma \geq 0 \,.
\end{align} 
This shows $\CalL$ is nonnegative and $\NullL = \Span\{\sqrt{\beta_\omega}\}$. By direct calculation we have $\vint{\mu \beta_\omega}_{\mu, \omega} = 0$. Hence $\sqrt{\beta_\omega} \in H^0$. Denoting $X_0 = \frac{\sqrt{\beta_\omega}}{\sqrt{\int_\VV \beta_\omega \dsigma}}$, one can verify by direct calculation that
\begin{align*}
    \CalL^{-1}(\mu X_0) = \frac{\vpran{\mu (\sqrt{\beta_\omega})^3}}{\sqrt{\int_\VV \beta_\omega \dsigma}} \,.
\end{align*}
Hence,
\begin{align*}
   \viint{\mu \CalL^{-1}(\mu \VecX_{0})}{f}_{\mu, \omega}
&= \frac{1}{\sqrt{\int_\VV \beta_\omega \dsigma}}\viint{\mu^2 \beta_\omega^2}{\frac{f}{\sqrt{\beta_\omega}}}_{\mu, \omega}
\\
&= \frac{1}{\sqrt{\int_\VV \beta_\omega \dsigma}}\viint{\mu^2 \beta_\omega^2}
          {\frac{f}{\sqrt{\beta_\omega}} - \vint{\frac{f}{\sqrt{\beta_\omega}}}_{\mu, \omega}}_{\mu, \omega}
   + \alpha_0 \vint{\frac{f}{\sqrt{\beta_\omega}}}_{\mu, \omega} \,,
\end{align*}
where $\alpha_0 = \frac{\vint{\mu^2 \beta_\omega^2}}{\sqrt{\int_\VV \beta_\omega \dsigma}} > 0$. Thus,
\begin{align*}
   \int_0^{\omega_m}\int_{-1}^1 f \CalL_d f \dsigma
&= \vint{f \CalL f}_{\mu, \omega} + \alpha \viint{\mu X_0}{f}_{\mu, \omega}^2 
   + \alpha \viint{\mu \CalL^{-1}(\mu X_0)}{f}_{\mu, \omega}^2
\\
& \geq \vint{f \CalL f}_{\mu, \omega}
   + \alpha \viint{\mu \CalL^{-1}(\mu X_0)}{f}_{\mu, \omega}^2
\\
& \geq 
      \vint{f \CalL f}_{\mu, \omega} + \frac{\alpha \alpha_0^2}{2} \vint{\frac{f}{\sqrt{\beta_\omega}}}_{\mu, \omega}^2
       -  \frac{\alpha}{\int_\VV \beta_\omega \dsigma} \viint{\mu^2 \beta_\omega^2}
          {\frac{f}{\sqrt{\beta_\omega}} - \vint{\frac{f}{\sqrt{\beta_\omega}}}_{\mu, \omega}}_{\mu, \omega}^2 
\\
& \geq
     \frac{1}{4} 
     \int_0^{\omega_m} \int_{-1}^1
     \vpran{\frac{f}{\sqrt{\beta_\omega}} 
                 - \vint{ \frac{f}{\sqrt{\beta_\omega}}}_{\mu, \omega}}^2 \dsigma
     + \frac{\alpha \alpha_0^2}{2} \vint{\frac{f}{\sqrt{\beta_\omega}}}_{\mu, \omega}^2 \,,
\end{align*}
where the last inequality follows from choosing $\alpha$ small enough and then applying Cauchy-Schwartz and \eqref{bound:L-multi-freq}. Let $\tilde\alpha = \min\{\frac{1}{4}, \frac{\alpha \alpha_0^2}{2}\}$. Then by the assumption that $\beta_\omega \in (0, b_0]$, we have
\begin{align*}
    \int_0^{\omega_m}\int_{-1}^1 f \CalL_d f \dsigma 
\geq 
   \tilde\alpha \norm{\frac{f}{\sqrt{\beta_\omega}}}_{L^2(\dsigma)}^2 
\geq
   \sigma_2 \norm{f}_{L^2(\dsigma)}^2  \,,
\qquad
   \sigma_2 = \frac{\tilde\alpha}{\tau_0} \,,
\end{align*}
which proves the coercivity of the damped operator $\CalL_d$ on $L^2(\dsigma)$. Hence $\CalL$ satisfies all the assumptions (\PL1)-(\PL5).
\end{proof}

The type of boundary conditions we use here is the case where the wall does not change the frequency of the phonon. More precisely,  for each $\omega > 0$, the boundary condition for the original nonlinear equation reads 
\begin{align*}
    \mathscr{K} F \big|_{\mu > 0}
&  = \alpha_d \int_{\mu<0} |\mu| v(\omega) F \dmu
       \frac{F_{BE}(\omega, T_0)}{\int_{\mu>0} |\mu| v(\omega) F_{BE}(\omega, T_0) \dmu}
       + \alpha_s F \big|_{\mu < 0} (-\mu, \omega) \,.
\\
& = 2\alpha_d \int_{\mu<0} |\mu| v(\omega) F \dmu
       + \alpha_s F \big|_{\mu < 0} (-\mu, \omega) \,.
\end{align*}
Linearizing $F$ as in~\eqref{eq:linearF} and~\eqref{linearization-2}, we obtain the linearized boundary operator $\CalK$ as
\begin{equation}
\begin{aligned} \label{cond:bdry-multi-freq}
   \CalK f
  = \alpha_d\CalK_d f + \alpha_s\CalK_s (f \big|_{\mu<0})
  = 2\alpha_d \int_{\mu<0} |\mu| f \dmu
       + \alpha_s f \big|_{\mu<0}(\CalR v) \,,
\end{aligned}
\end{equation}
where
$\alpha_d, \alpha_s \geq 0$, and $\alpha_d + \alpha_s < 1$. Now we verify that 
\begin{lem}\label{lem:phonon}
The boundary operator $\CalK$ defined in~\eqref{cond:bdry-multi-freq} satisfies ({\PK}).
\end{lem}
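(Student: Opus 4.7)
By Remark~\ref{rmk:Maxwell-BC}, since $\CalK_s$ automatically satisfies (\PK) with equality (using that $\dsigma = \frac{C_\omega}{\Theta_0 \tau(\omega)}\dmu\domega$ is symmetric in $\mu$), it is enough to verify (\PK) for the diffuse part $\CalK_d$. Lemma~\ref{lem:suff-cond-PK} does not immediately apply here because the linearization~\eqref{linearization-2} around the Bose--Einstein distribution does not place $\CalK_d$ in the exact form~\eqref{def:K-d-example}; instead I will argue directly. The key observation is that reflection at the wall preserves the phonon frequency $\omega$, so for each fixed $\omega$ the operator $\CalK_d$ reduces to a scalar diffuse-reflection operator in the variable $\mu$ alone.

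First I would fix $\omega \in (0, \omega_m)$ and apply the Cauchy--Schwarz inequality to
\begin{equation*}
   (\CalK_d f)(\mu, \omega) = 2\int_{-1}^{0} |\mu'|\, f(\mu', \omega)\dmu',
\end{equation*}
using $\int_{-1}^{0}|\mu'|\dmu' = \tfrac{1}{2}$ to obtain the pointwise bound
\begin{equation*}
    |(\CalK_d f)(\mu, \omega)|^2 \leq 4 \cdot \tfrac{1}{2} \int_{-1}^{0} |\mu'|\, |f(\mu',\omega)|^2 \dmu' = 2\int_{-1}^{0} |\mu'|\, |f|^2 \dmu'.
\end{equation*}
Multiplying by $\mu$ and integrating over $\mu \in (0,1)$ produces another factor of $\tfrac{1}{2}$, yielding
\begin{equation*}
   \int_0^1 \mu\, |\CalK_d f|^2 \dmu \leq \int_{-1}^{0} |\mu'|\, |f|^2 \dmu'.
\end{equation*}
Integrating this pointwise-in-$\omega$ inequality against $\frac{C_\omega}{\Theta_0 \tau(\omega)}\domega$ then gives exactly $\int_{\mu>0}\mu|\CalK_d f|^2\dsigma \leq \int_{\mu<0}|\mu||f|^2\dsigma$, which is (\PK) for $\CalK_d$.

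Finally, writing $\CalK = \alpha_r \CalK_r$ with $\alpha_r = \alpha_d + \alpha_s$ and $\CalK_r = \frac{\alpha_d}{\alpha_r}\CalK_d + \frac{\alpha_s}{\alpha_r}\CalK_s$ as in Remark~\ref{rmk:Maxwell-BC}, convexity of $|\cdot|^2$ gives
\begin{equation*}
   |\CalK_r f|^2 \leq \tfrac{\alpha_d}{\alpha_r} |\CalK_d f|^2 + \tfrac{\alpha_s}{\alpha_r} |\CalK_s f|^2,
\end{equation*}
and combining the bounds for the two components yields (\PK) for $\CalK_r$. There is no real technical obstacle: the argument is essentially two applications of Cauchy--Schwarz plus convexity. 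The only point to watch is that the measure $\dsigma$ has an $\omega$-dependent weight, so one must carry the $\omega$ variable through as a parameter rather than attempting to invoke Lemma~\ref{lem:suff-cond-PK} directly.
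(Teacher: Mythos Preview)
Your proposal is correct and follows essentially the same route as the paper: reduce to the diffuse part via Remark~\ref{rmk:Maxwell-BC}, then apply Cauchy--Schwarz using $\int_{-1}^0 |\mu'|\dmu' = \int_0^1 \mu\dmu = \tfrac{1}{2}$, treating $\omega$ as a parameter carried through the weight $\frac{C_\omega}{\Theta_0\tau(\omega)}$. The only difference is cosmetic---the paper writes the computation as a single chain of inequalities rather than fixing $\omega$ first---and your final convexity step to recombine $\CalK_d$ and $\CalK_s$ into $\CalK_r$ is slightly more explicit than what the paper spells out.
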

\begin{proof}
Again we only need to show that $\CalK_d$ satisfies (\PK).
By the definition of $\CalK_d$ in~\eqref{cond:bdry-multi-freq}, we have
\begin{align*}
   \int_{\mu > 0} \mu |\CalK_d f|^2 \dsigma
&= 4\int_{\mu > 0} \mu \vpran{\int_{\mu'<0} |\mu'| f(\mu', \omega) \dmu'}^2 \frac{C_\omega}{\Theta_0 \tau(\omega)} \domega \dmu 
\\
& \leq
     2 \int_{\mu > 0} \mu \vpran{\int_{\mu'<0} |\mu'| f^2(\mu', \omega) \frac{C_\omega}{\Theta_0 \tau(\omega)} \domega \dmu' } \dmu
\\
& = \int_{\mu < 0} |\mu| f^2(\mu, \omega) \dsigma \,,
\end{align*}
which shows (\PK) holds. 
\end{proof}

In summary, the damped equation has the form
\begin{equation} \label{soln:damp-phonon}
\begin{aligned}
    \mu \del_x f 
    = - \frac{1}{\sqrt{\beta_\omega}} \vpran{\frac{f}{\sqrt{\beta_\omega}}
       - \vint{\frac{f}{\sqrt{\beta_\omega}}}_{\mu, \omega}}
       &- \alpha \mu X_0 \viint{\mu X_0}{f} 
       - \alpha \mu \CalL^{-1}(\mu X_0) \viint{\mu \CalL^{-1}(\mu X_0)}{f} \,.
\end{aligned}
\end{equation}
The boundary condition is given as 
\begin{align}  \label{cond:bdry-multi-freq-recall}
    f \big|_{x=0} = h + \CalK f \,,
\qquad \mu > 0 \,,
\end{align}
where $\CalK$ is given in~\eqref{cond:bdry-multi-freq}.
We again have
\begin{align*}
    (\I + \bar\CalK)^{-1} 
    &= \frac{1}{1+\alpha_s} \vpran{\I - \frac{\alpha_d}{1+\alpha_d+\alpha_s} \bar\CalK_d}\,,
\\
   (\I + \bar\CalK)^{-1} (\I - \bar\CalK) 
   &= \frac{1-\alpha_s}{1+\alpha_s} 
     \vpran{\I - \frac{2\alpha_d}{(1-\alpha_s)(1+\alpha_d+\alpha_s)} \bar\CalK_d} \,,
\end{align*}
where
\begin{align*}
   \bar\CalK_d f
   = 2\int_{\mu > 0} \mu f \dmu \,.
\end{align*}
The special solution $g_0$ is constructed as
\begin{equation}
\begin{aligned} \label{eq:phonon-g-0}
    \mu \del_x g_0
    &+  \CalL_d g_0 = 0 \,,
\\   
   g_{0} \big|_{\mu>0} 
 = &\vpran{X_{0} - \CalK (X_{0} \big|_{\mu<0})} + \CalK (g_{0} \big|_{\mu<0}) \,,
\qquad
   \mu > 0 \,.
\end{aligned}
\end{equation}
Finally, the exact solution for equation~\eqref{eq:phonon-4} with boundary condition~\eqref{cond:bdry-multi-freq} is
\begin{align*}
    f_h = f - c_h g_0 + c_h X_0 \,,
\end{align*}
where $f$ solves the damped equation~\eqref{soln:damp-phonon} with the boundary condition~\eqref{cond:bdry-multi-freq-recall}  and $\displaystyle c_h = \frac{\viint{\mu X_0}{f}_{\mu, \omega}}{\viint{\mu X_0}{g_0}_{\mu, \omega}}$.

\subsubsection{Numerical Results}
We discretize the $\omega$-variable uniformly and replace the integral in $\omega$ by the trapezoidal rule. The resulting system can be viewed as a multi-species system. Hence the construction of basis functions is the same as in Section~\ref{sec:algorithm_ms}.

We again show examples with both pure incoming data and Maxwell boundary condition. For computational convenience, we modify the equilibrium state as 
\begin{align*}
   F_{BE}(\omega, T) = e^{-\hbar\omega/(k_B T)} \,.
\end{align*}

\smallskip

\noindent \emph{Example 4.3.1 Incoming boundary condition.} \,\,
In the first example, we set 
\begin{align*}
   \frac{C_\omega}{\tau(\omega)}
   =\omega\exp\left(-\omega/1000\right) \,,
\qquad
   \tau(\omega) v(\omega) = \frac{1}{\omega} \,,
   \qquad
   \omega\in[1,8].
\end{align*} 
Then
$\NullL = \Span\{\sqrt{\tau(\omega) v(\omega)}\} =
\Span\{\sqrt{1/\omega}\}$.
Figure~\ref{fig:MF_Dirichlet_recover} shows that if $h = X_0$, then
the numerical solution is in good agreement with the analytical
solution where $f_h = X_0$.
\begin{figure}
\centering
\includegraphics[height = 0.2\textheight,width = 0.23\textwidth]{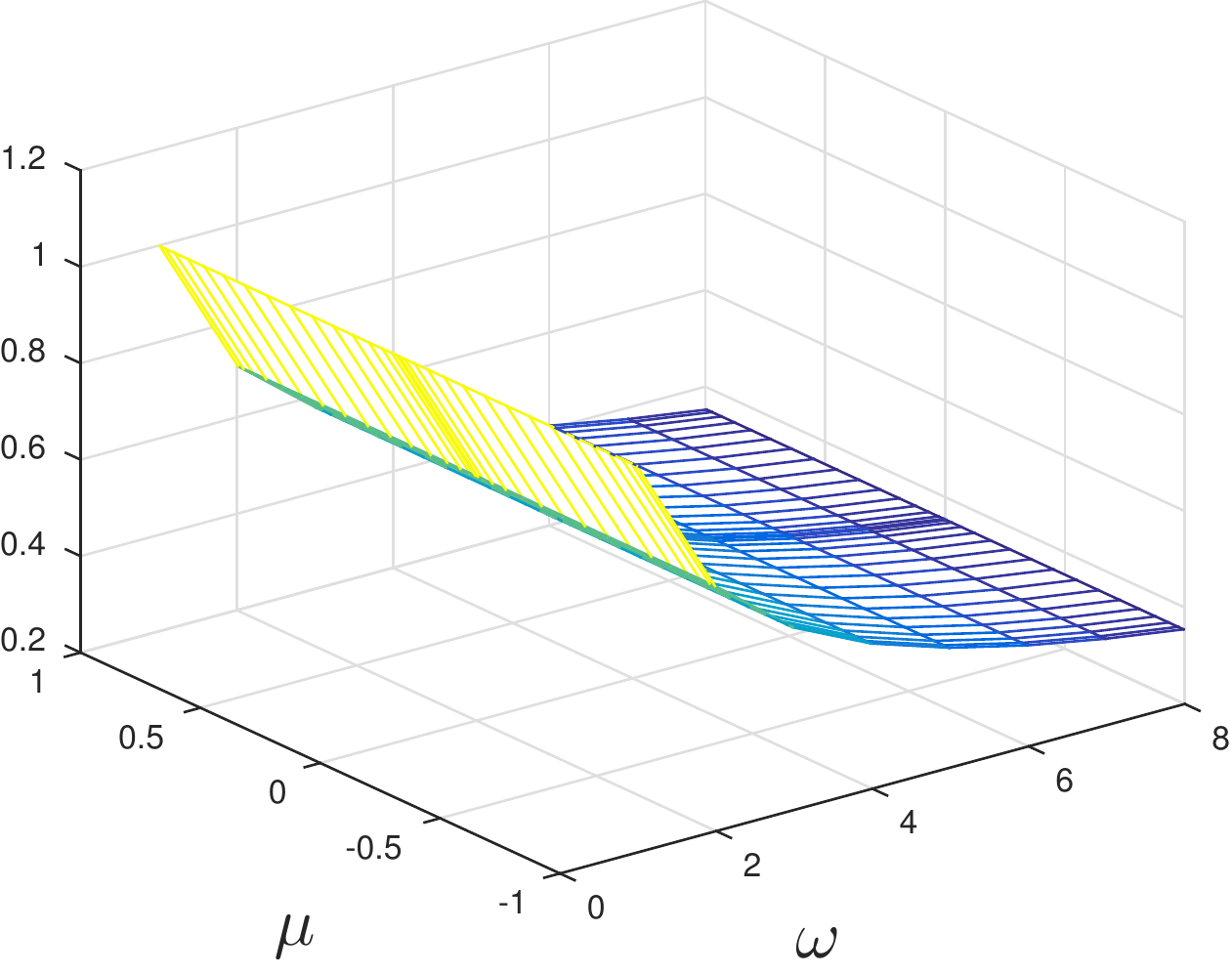}
\includegraphics[height = 0.2\textheight,width = 0.23\textwidth]{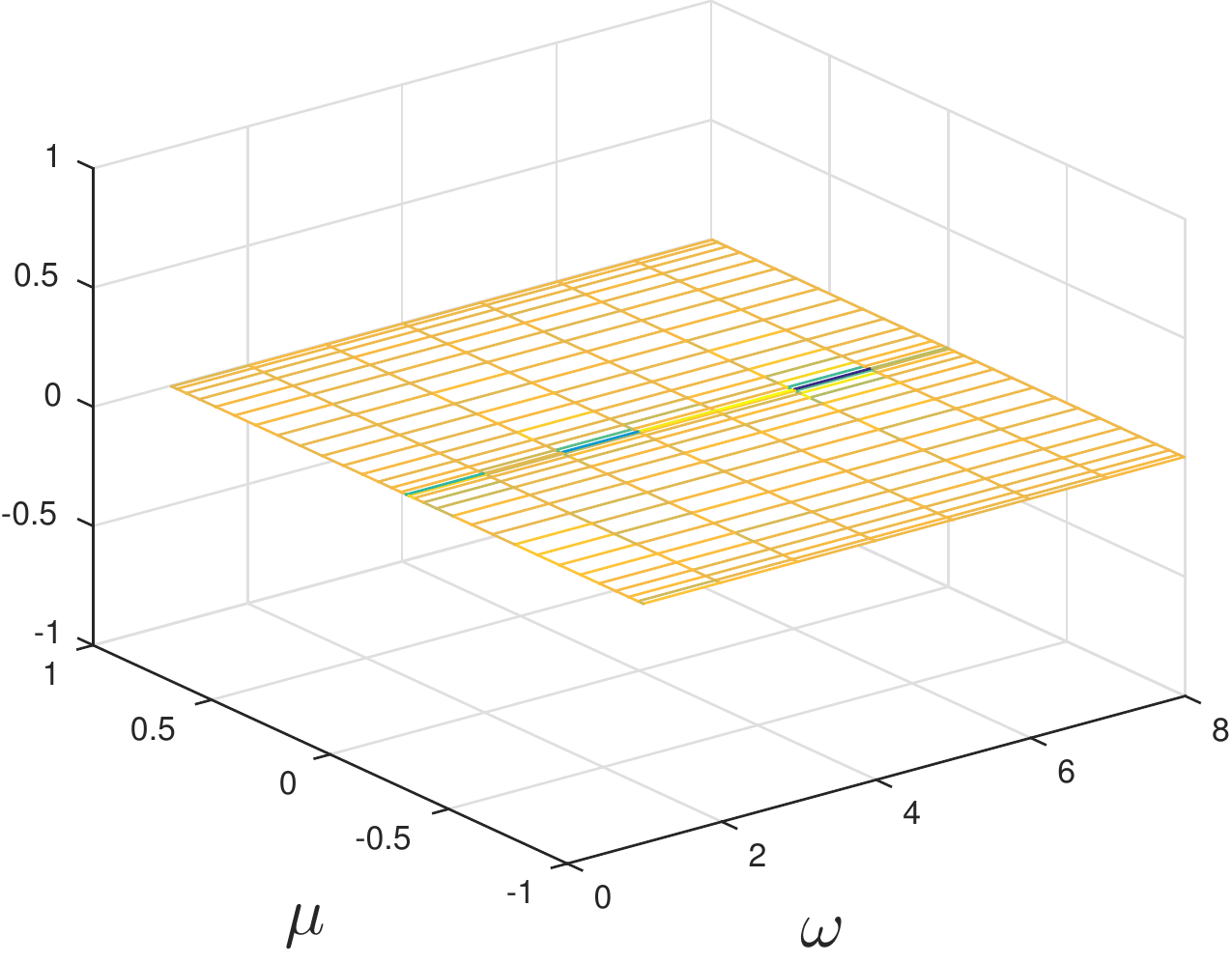}
\includegraphics[height = 0.2\textheight,width = 0.23\textwidth]{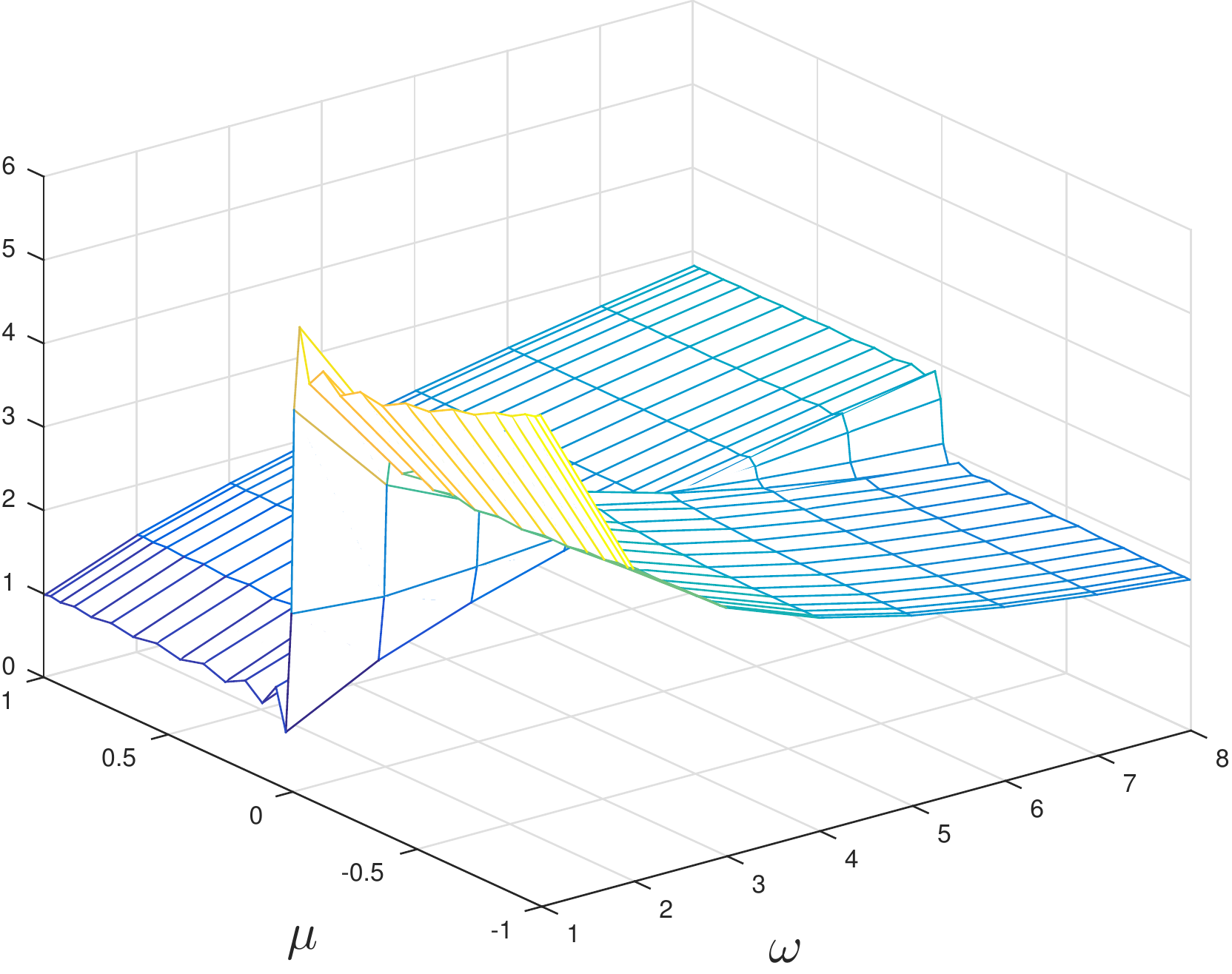}
\includegraphics[height = 0.2\textheight,width = 0.23\textwidth]{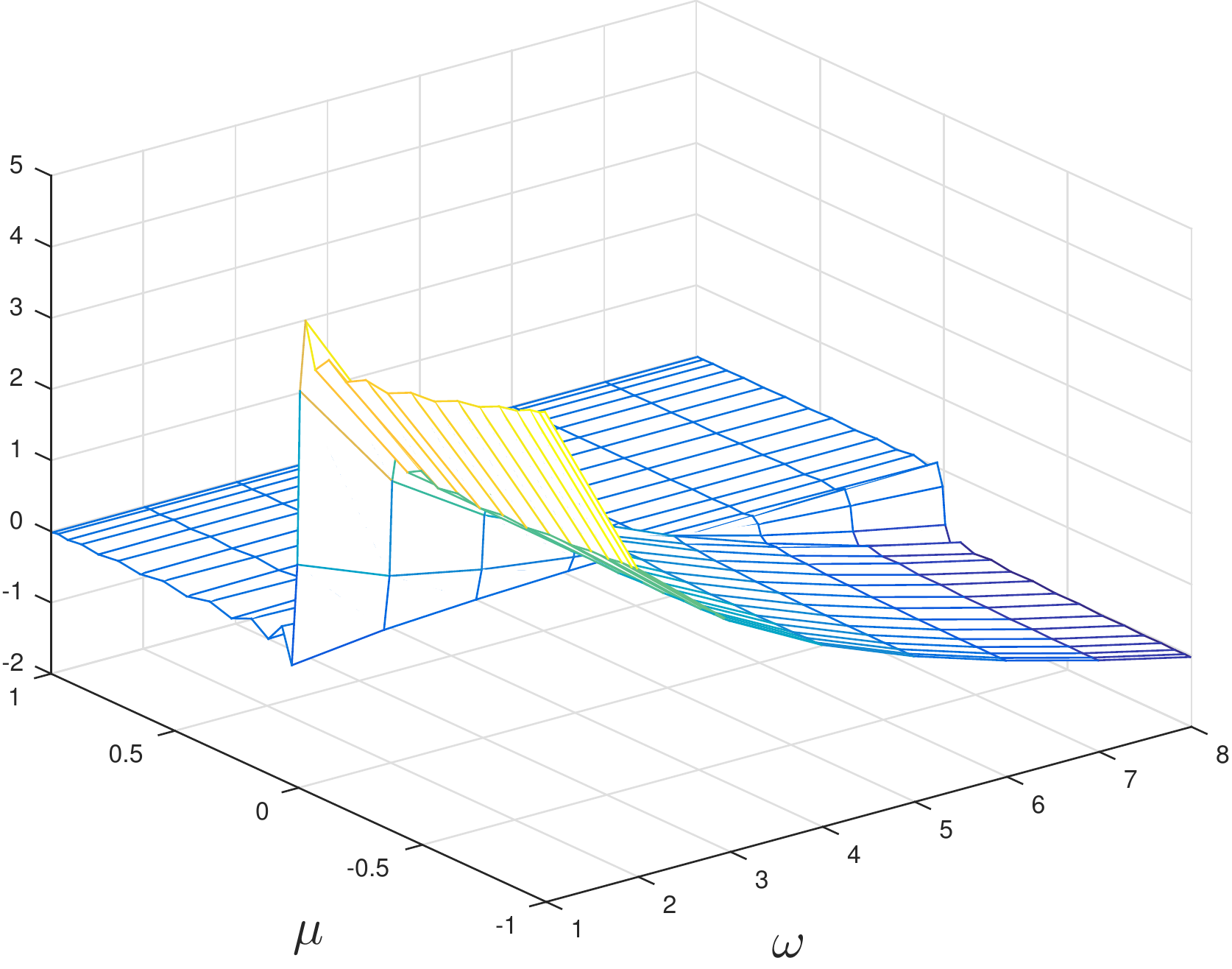}
\caption{(Example 4.3.1) The two figures on the left are boundary data and the difference ($f_h(x=0) - h$) obtained for $h = \sqrt{1/\omega} \in\NullL$. The two figures on the right are the solution at the boundary and the difference ($f_h(0, \mu, \omega)-h$) obtained for $h = \sqrt{\omega}$. In this case we expect the difference to be of order $\mathcal{O}(1)$. In both cases, we use $31$ basis functions in $\mu$ direction and sample $8$ grid points along $\omega$.}\label{fig:MF_Dirichlet_recover}
\end{figure}

\smallskip

\noindent\emph{Example 4.3.2  Maxwell boundary condition.}
In the second example, we take the same $C_{\omega}$, $\tau(\omega)$
and $v(\omega)$ as in the previous example and set the accommodation
coefficients as $(\alpha_d, \alpha_s) = (0.3,0.4)$. Once again if
$h = X_0 - \CalK(X_0 \big|_{\mu<0})$, the exact solution must be
$f_h = X_0$. The numerical solution demonstrated in
Figure~\ref{fig:MF_Maxwell_recover} shows a good match with the exact
solution.
\begin{figure}
\centering
\includegraphics[height = 0.18\textheight,width = 0.23\textwidth]{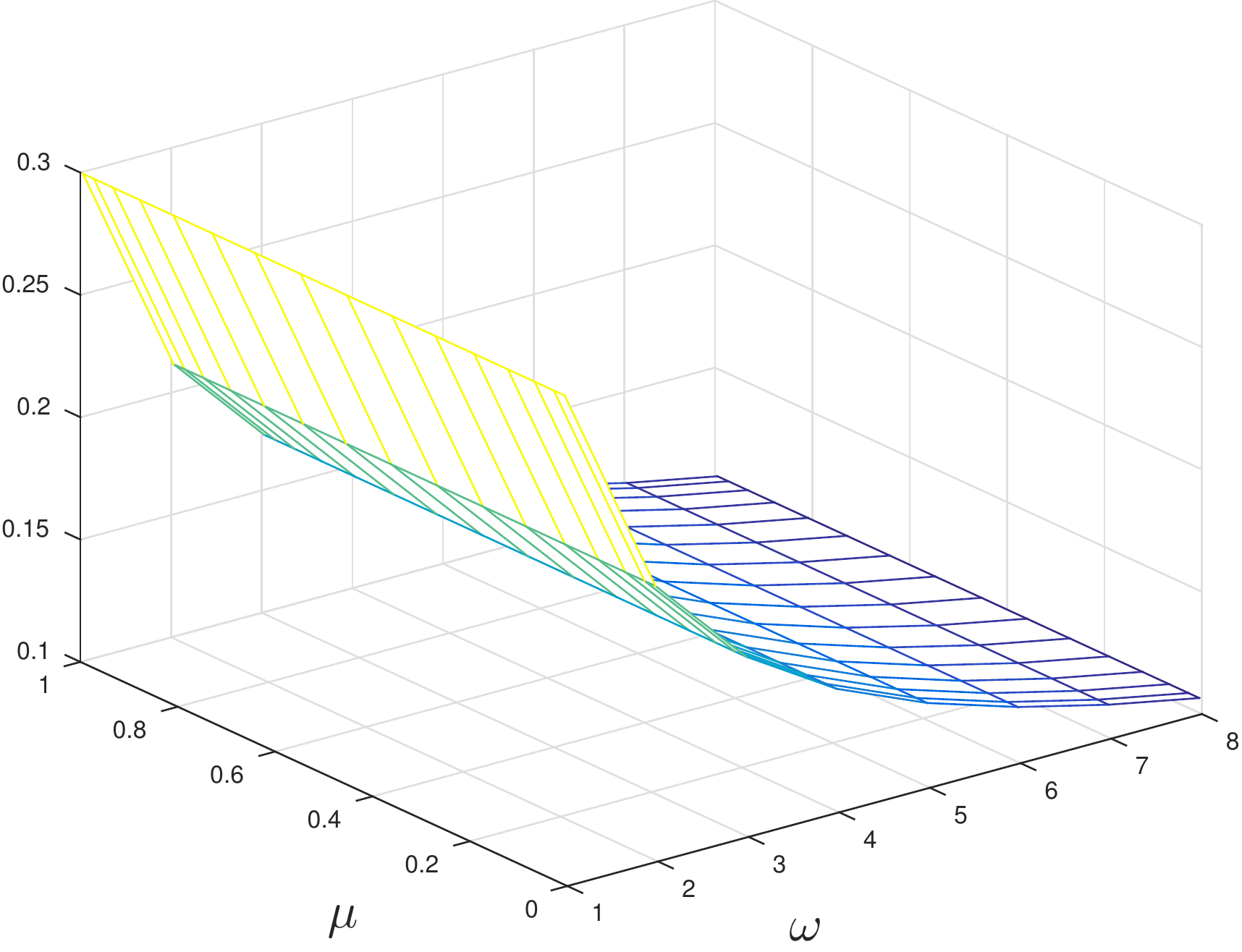}
\includegraphics[height = 0.18\textheight,width = 0.23\textwidth]{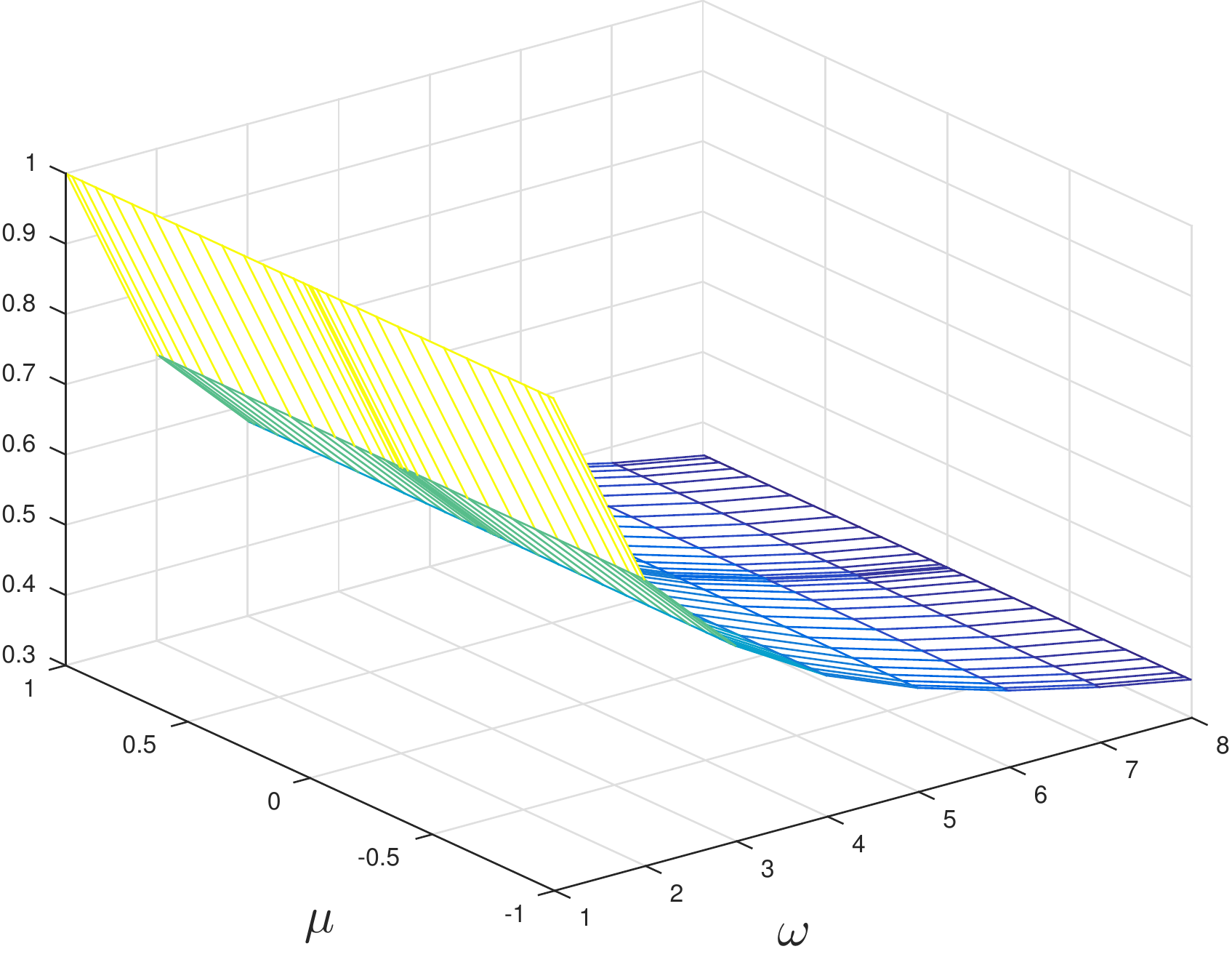}
\includegraphics[height = 0.18\textheight,width = 0.23\textwidth]{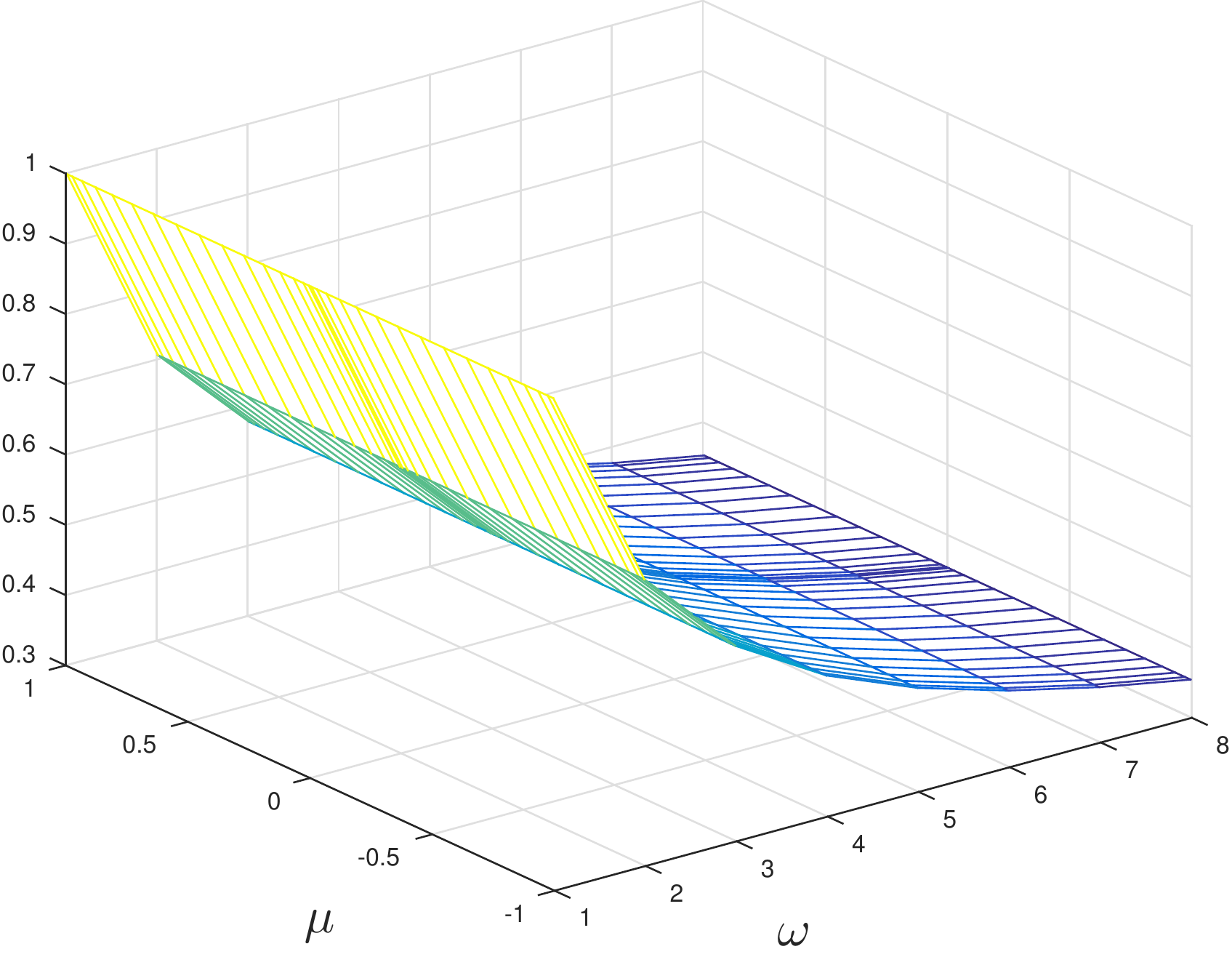}
\includegraphics[height = 0.18\textheight,width = 0.23\textwidth]{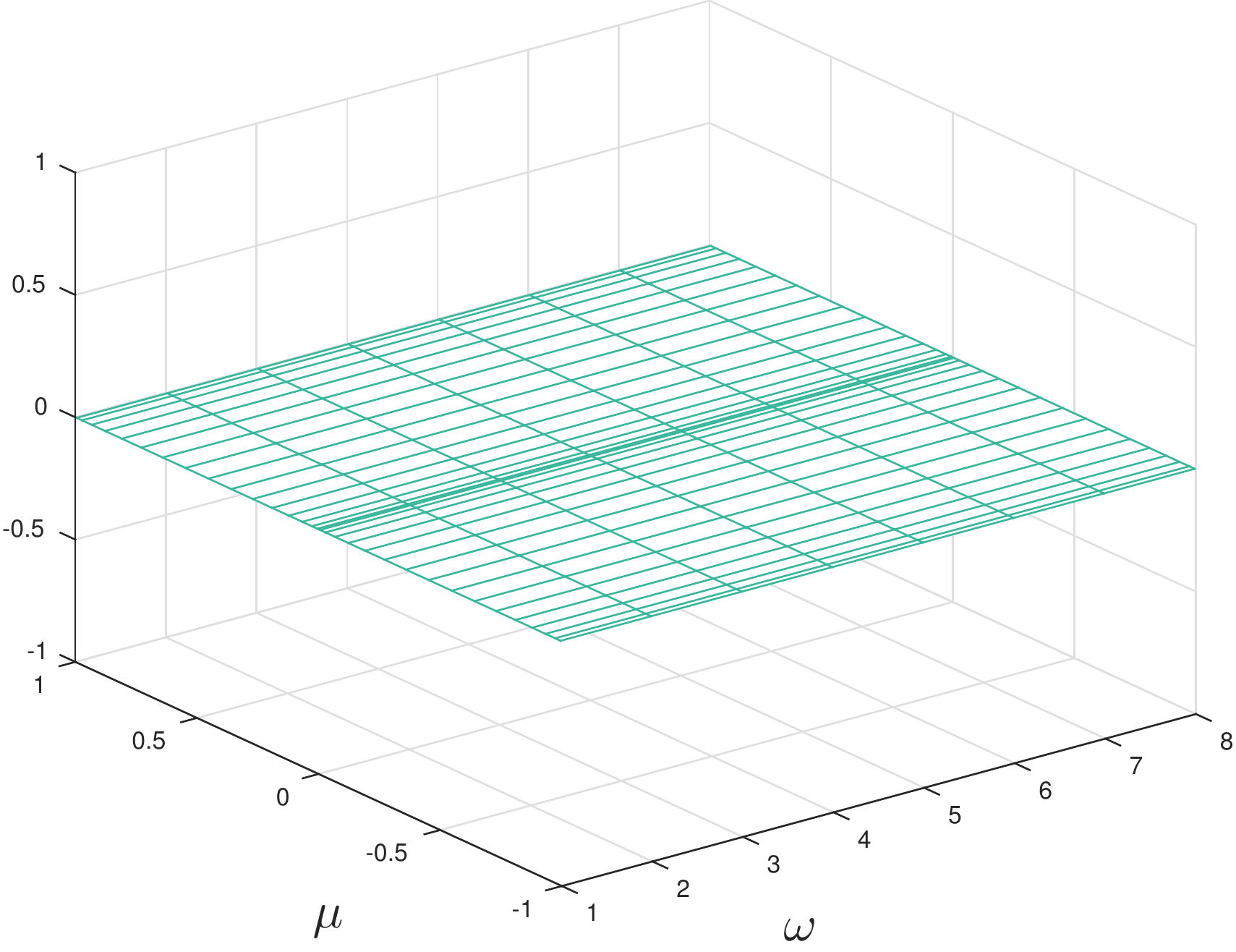}
\caption{(Example 4.3.2) In the second row we use
  $h = X_0 - \CalK(X_0 \big|_{\mu<0})$ as the incoming Dirichlet
  data. The four plots show $h$, $X_0$, numerical result $f_h$ and the
  recovery difference $f_h-X_0$. The accommodation coefficients are
  set as $(\alpha_d, \alpha_s) = (0.3,0.4)$. We recover the exact
  solution $X_0$ as expected. In this example, we use $31$ basis
  functions in $\mu$ direction and sample $8$ grid points along
  $\omega$. Note that the domain size for $h$ is only half of that for
  $X_0$.}\label{fig:MF_Maxwell_recover}
\end{figure}

\bibliographystyle{amsxport}
\bibliography{MaxwellBC}

\end{document}